
\documentclass[journal]{IEEEtran}
\hyphenation{op-tical net-works semi-conduc-tor}
\usepackage{tikz}
\usepackage{amsfonts}
\setlength\parindent{1.5em}
\usepackage{scrextend}
\usepackage{amsfonts}
\usepackage{amsmath}
\usepackage{amssymb}
\usepackage{enumitem}
\usepackage{mathtools}
\usepackage{amsthm}
\usepackage{breqn}
\usepackage{dsfont}
\allowdisplaybreaks

   \newtheorem{theorem}{\bf{Theorem}}

       \newtheorem{lemma}{\bf{Lemma}}
       \newtheorem{proposition}{\bf{Proposition}}
       \newtheorem{definition}{\bf{Definition}}
  \newtheorem{remark}{Remark}
  \newtheorem{assumption}{Assumption}

\newcommand\scalemath[2]{\scalebox{#1}{\mbox{\ensuremath{\displaystyle #2}}}}
\begin{document}
%
\title{Optimal Solutions to Infinite-Player Stochastic Teams and Mean-Field Teams}
%
%
%

\author{Sina~Sanjari,~\IEEEmembership{Student Member,~IEEE,}
        Serdar Y\"uksel,~\IEEEmembership{Member,~IEEE,}
\thanks{This research was supported by the Natural Sciences and Engineering Research Council (NSERC) of Canada. A summary of some of the results here, without technical details, was presented in \cite{sinacdc2018optimal} at the 57th IEEE Conference on Decision and Control (CDC), Miami Beach, FL, 2018. The authors are with the Department of Mathematics and Statistics,
     Queen's University, Kingston, ON, Canada,
     Email: \{16sss3,yuksel@queensu.ca\}}
     }

\maketitle

\begin{abstract}
We study stochastic static teams with countably infinite number of decision makers, with the goal of obtaining (globally) optimal policies under a decentralized information structure. We present sufficient conditions to connect the concepts of team optimality and person by person optimality for static teams with countably infinite number of decision makers. We show that under uniform integrability and uniform convergence conditions, an optimal policy for static teams with countably infinite number of decision makers can be established as the limit of sequences of optimal policies for static teams with $N$ decision makers as $N \to \infty$. Under the presence of a symmetry condition, we relax the conditions and this leads to optimality results for a large class of mean-field optimal team problems where the existing results have been limited to person-by-person-optimality and not global optimality (under strict decentralization). In particular, we establish the optimality of symmetric (i.e., identical) policies for such problems. As a further condition, this optimality result leads to an existence result for mean-field teams. We consider a number of illustrative examples where the theory is applied to setups with either infinitely many decision makers or an infinite-horizon stochastic control problem reduced to a static team. 
\end{abstract}

\begin{IEEEkeywords}
Stochastic teams, average cost optimization, decentralized control, mean-field theory.
\end{IEEEkeywords}

%
\IEEEpeerreviewmaketitle

\section{Introduction}
%
%
%
%
\label{sec:intro}

A decentralized control system, or a team, consists of a collection of decision makers/agents acting together to optimize a common cost function, but not necessarily sharing all the available information. Teams whose initial states, observations, cost function, or the evolution dynamics are random or are disturbed by some external noise processes are called \textit{stochastic teams}. At each time stage, each agent only has access to some parts of the global information. If each agent's information depends only on primitive random variables, the team is \textit{static}. If at least one agent's information is affected by an action of another agent, the team is said to be \textit{dynamic}. 


On teams with finitely many decision makers, Marschak \cite{mar55} studied optimal static teams and Radner \cite{Radner} developed foundational results on optimality and established connections between person-by-person optimality, stationarity, and team-optimality. Radner's results were generalized in \cite{KraMar82} by relaxing optimality conditions. A summary of these results is that in the context of static team problems,  convexity of the cost function, subject to minor regularity conditions, may suffice for the global optimality of person-by-person-optimal solutions. In the particular case for LQG (Linear Quadratic Gaussian) static teams, this result leads to the optimality of linear policies \cite{Radner}, which also applies for dynamic LQG problems under specific information structures (to be discussed further below) \cite{HoChu}. These results are applicable for static teams with finite number of decision makers. In our paper, the focus is on teams with infinitely many decision makers.

{\bf Connections with the literature on mean-field games/teams.} On the case with infinitely many decision makers, a related set of results involves mean-field games: mean-field games (see e.g., \cite{CainesMeanField2,CainesMeanField3,LyonsMeanField}) can be viewed as limit models of symmetric non-zero-sum non-cooperative $N$-player games with a mean-field interaction as $N\to \infty$. The uniqueness and non-uniqueness results have been established for mean-field games in both the PDE and probabilistic setting \cite{LyonsMeanField, bardi2017non, carmona2016mean}. In \cite{bardi2017non}, examples have been provided to show the existence of multiple solutions to the mean-field games when uniquness conditions in \cite{LyonsMeanField, carmona2016mean} are violated.  The mean-field approach designs policies for both cases of games with infinitely many players, as well as games with very large number of players where the equilibruim policies for the former are shown to be $\epsilon$-equilibria for the latter \cite{CainesMeanField3, saldi2019approximate, cecchin2018probabilistic}. These results, while very useful for establishing  equilibria or in the context of team problems, person-by-person-optimal policies, does not guarantee the $\epsilon$-global optimality among all policies. That is, $\epsilon$-person-by-person-optimality is not sufficient for $\epsilon$-global optimality since in the limit one typically only finds equilibrium policies without establishing their uniqueness (which would imply global optimality for team problems) \cite{mas1984theorem, schmeidler1973equilibrium, jovanovic1988anonymous}. Related to such problems, in the economic theory literature, \cite{schmeidler1973equilibrium,mas1984theorem}, have considered {\it Cournot-Nash} equilibria. 
This Cournot-Nash equilibrium concept corresponds to a mean-field equilibrium for a static problem. However, such an equilibrium does not necessarily imply global optimality in the context of team problems, as discussed above.

Recently, mean-field team problems have also been studied: Social optima for mean-field LQG control problems under both centralized and a specific decentralized information structure have been considered in \cite{huang2012social, wang2017social}. In \cite{arabneydi2015team}, a setup is considered where decision makers share some information on the mean-field in the system, and through showing that the performance of a corresponding centralized system can be realized under a decentralized information structure, global optimality is established. In our paper, we follow an approach where optimality for every $N$ is established and also optimality holds as $N\to \infty$ for the limit policy. The papers \cite{huang2016linear,huang2011stochastic} have studied a continuous-time setup where a major agent is present; by considering the social impact for each individual player, they showed person-by-person optimal policies asymptotically minimize the social cost \cite{huang2012social}. By approximating the mean-field term, the authors bound the induced approximation error of order $O(N^{\frac{-1}{2}}+\epsilon_{N})$ where $\epsilon_{N}$ goes to zero as the number of players $N \to \infty$ \cite{huang2012social}. In \cite{buckdahn2014nonlinear}, mean-field team problems with mixed players have been considered where minor agents act together to minimize a common cost against a major player. Also, for the LQ setup, under the assumption that DMs apply identical policy in addition to some technical assumptions on the cost function and transition probabilities of Markov chains, \cite{arabneydi2017certainty} showed that the expected cost achieved by a sub-optimal fully decentralized strategy is on $\epsilon(n)$ neighborhood of the optimal cost achieved when mean-field (empirical distribution of states) has been shared, where $n$ is the number of players. Such results on mean-field teams either show global optimality through equivalence to the performance of a centralized setup (considering specific sharing patterns on the mean-field model) or typically only assume person-by-person-optimality. In our paper, we will establish global optimality under a completely decentralized information structure; however, certain technical conditions will be imposed.

{\bf Connections with the literature on limits of finite player games/teams.} There exist contributions where games with finitely many players are studied, their equilibrium solutions are obtained and the limit is taken. Along this direction, the connection between Nash equilibrium of symmetric $N$-player games and an optimal solution of mean-field games has been addressed in \cite{bardi2014linear, feleqi2013derivation, fischer2017connection, biswas2015mean, arapostathis2017solutions, lacker2016general}.  The goal is to find sufficient conditions such that the limit of the sequences of Nash equilibrium for the $N$-player games identify as a solution of the corresponding mean-field game as $N \to \infty$. Convergence of Nash equilibria of symmetric $N$-player games to the corresponding mean-field games for stationary continuous-time problems with ergodic costs has been investigated in  \cite{bardi2014linear, feleqi2013derivation}. Moreover, such a convergence of Nash equilibria for symmetric $N$-player games to the corresponding mean-field solution for a broad class of continuous time symmetric games has been established in \cite{fischer2017connection} under uniform integrability and exchangeability (symmetry) conditions (see \cite[Theorem 5.1 and conditions (T) and (S)]{fischer2017connection}) provided that the cost function and dynamics admit the structural restrictions. 
In \cite{lacker2016general}, assumptions on equilibrium policies of the large population mean-field symmetric stochastic differential games have been relaxed to allow the convergence of asymmetric approximate Nash equilibria to a weak solution of the mean-field game \cite[Theorem 2.6]{lacker2016general}. In a discrete-time setup, \cite{biswas2015mean} considered convergence of Nash equilibria for games with the mean-field interaction and with ergodic costs for Markov processes. The convergence result has been derived under an existence assumption on the mean-field solution and an additional convexity condition (see \cite[Theorem 5.1 and condition (A7)]{biswas2015mean}). In contrast, in the context of stochastic teams with countably infinite number of decision makers, the gap between person-by-person optimality (Nash equilibrium in the game-theoretic context) and global team optimality is significant since a perturbation of finitely many policies fails to deviate the value of the expected cost, thus person by person optimality is a weak condition for such a setup, and hence the results presented in the aforementioned papers may be inconclusive regarding global optimality of the limit equilibrium. This observation motivates us to investigate the connection between person-by-person-optimality and global team optimality in stochastic teams with countably infinite decision makers. Compared with \cite{bardi2014linear, feleqi2013derivation, fischer2017connection, biswas2015mean, arapostathis2017solutions} where only the convergence of a sequence of Nash equilibria for symmetric games with the mean-field interaction has been studied, we show that, under sufficient conditions, sequences of optimal policies for teams with $N$ number of decision makers as $N \to \infty$ converge to a team optimal policy for static teams with countably infinite number of decision makers. 

Related to mean-field team problems, a limit theory for \textit{mean-field type problems} (also called \textit{Mckean-Vlasov stochastic control problems}) has been established in \cite{lacker2017limit,carmona2018probabilistic}. In \cite{lacker2017limit, carmona2018probabilistic}, the connection between solutions of $N$-player differential control systems  and solutions of Mckean-Vlasov control problems has been investigated. It has been shown that the sequence of empirical measures of pairs of states and $\epsilon_{N}$-centralized optimal controls (under the classical information structure since all the information available are completely shared between players) converges in distribution as $N \to \infty$ to limit points in the set of pairs of states and optimal controls of the Mckean-Vlasov problem \cite{lacker2017limit} (see Remark \ref{rem:lacker}). In contrast, our focus is on the information structures of decision makers. Here, under convexity of the cost function and symmetry, we show the convergence of a sequence of decentralized optimal policies of $N$-DM teams  to an optimal policy of mean-field teams as $N \to \infty$. 
  
{\bf Connections with the literature on LQG games/teams.} There has been a number of studies focusing on the LQG setup (in addition to \cite{huang2012social, wang2017social}). A close study is \cite{mahajan2013static} where LQG static teams with countably infinite number of decision makers have been studied and sufficient conditions for global optimality have been established. In our paper, we utilize some of the results from \cite{mahajan2013static}, however compared with \cite{mahajan2013static}, we propose sufficient conditions for team optimality on average cost problems for a general setup: except convexity, no specific structure is presumed a priori on the cost function. For our analysis, we do not restrict the setup to the LQG one, where often direct methods can be applied building on \cite{Radner}, \cite{KraMar82}, and operator theory involving matrix algebra; in addition, we also study the mean-field setting. In fact, for a general setup of static teams, we introduce sufficient conditions (see Theorem \ref{the:5} and Theorem \ref{the:6}) such that the optimal cost and optimal policies of static teams with countably infinite number of decision makers is obtained as a limit of the optimal cost and optimal policies for static teams with $N$ number of decision makers as $N \to \infty$. In \cite{gattami2017team}, LQG team problems with infinitely many decision makers have been considered for a setup where the cost function is the expected inner-product of an infinite dimensional vector (and to allow for a Hilbert theoretic formulation, finiteness of the infinite sum of the moments of individual random variables is imposed) and linearity and uniqueness of optimal policies have been established; the finiteness (of the infinite summation) restriction rules out the setup in our paper. In \cite{ouyang2018optimal}, infinite horizon decentralized stochastic control problems containing a remote controller and a collection of local controllers dealing with linear models have been addressed for a setup where the cost is quadratic and the communication model satisfies a specified sharing pattern of information between local controller and remote controller. Under the assumed sharing pattern (common information), the connections between the optimal solution and the coupled algebraic Riccati equation for Markov jump linear systems and its convergence to the coupled fixed point equations have been utilized to show the optimality of the solution \cite{ouyang2018optimal}.
  
As a further motivation for our study, we note that for dynamic team problems, Ho and Chu \cite{HoChu} have introduced a technique such that dynamic partially nested LQG team problems can be reduced to static team problems (we also note that Witsenhausen \cite{WitsenStandard} showed that under an absolute continuity condition, any sequential dynamic team can be reduced to a static one). For infinite-horizon dynamic team problems, this reduction leads to a static team with countably many decision makers; thus leading to a different setup where our results in this paper will be applicable. We will study a particular example as a case study. In particular, the question of whether partially nested dynamic LQG teams admit optimal policies under an expected average cost criterion, in its most general form, has not been conclusively addressed despite the presence of results which impose linearity apriori for the optimal policies under such information structures \cite{Rotkowitz}. We hope that our solution approach can be utilized in the future to develop a complete theory for such problems. 


{\bf Contributions.} 
\begin{itemize}
\item[(i)] For a general setup of static teams, we show that (see Theorem \ref{the:6}), under a uniform integrability condition (see Remark \ref{rem:3}), if sequences of team optimal policies of decision makers $i=1,\dots,N$ of static teams with $N$ number of decision makers converge uniformly in $i=1,\dots,N$ (see (b) in Theorem \ref{the:6}), then the corresponding limit policies are team optimal for the static team with countably infinite number of decision makers, under the expected average cost criteria. 
\item[(ii)] We establish global optimality results for mean-field teams under strict decentralization of the information structure for both teams with large numbers of players and infinitely many players. Toward this end, we introduce a notion of symmetrically optimal teams (see Definition \ref{Def:ost}) to obtain a global optimality result under relaxed sufficient conditions (see Section \ref{sec:4}). Under mild conditions on action spaces and observations of decision makers, through concentration of measures arguments, we establish the convergence of optimal policies for symmetric mean-field teams with $N$ decision makers to the corresponding optimal policy of mean-field teams (see Section \ref{sec:4}). In addition, we establish an existence result for optimal policies on mean-field teams under relaxed conditions on action spaces and the cost function (see Theorem \ref{the:existence}). 
\item[(iii)] We apply our results to a number of illustrative examples: We first consider LQG and LQ (non-Gaussian) average cost problems with state coupling (see Section \ref{Ex:1}  and Section \ref{Ex:2}). We also consider LQG average cost problems with  control coupling (see Section \ref{Ex:3}). In addition, we show that the team optimal policy of LQG teams with classical information structure (see Section \ref{Ex:4}) is obtained using the technique proposed in this paper. This is important since this result, while is well-known in the stochastic control literature, has not been investigated using static reduction proposed in \cite{HoChu} and hence this approach can be viewed as a step to address optimal solutions for infinite-horizon partially nested dynamic LQG problems which can be reduced to a static team with countably infinite number of decision makers.
\end{itemize}
The organization of the paper is as follows. Preliminaries and the problem statement are presented in Section \ref{sec:2}. Section \ref{sec:3} contains our main results including sufficient conditions for team optimality and asymptotic optimality for a general setup of static teams with countably infinite number of decision makers. Section \ref{sec:4} discusses symmetric and mean-field teams, and  applications are presented in Section \ref{sec:5}. Section \ref{sec:6} presents concluding remarks.

\section{Problem Formulation}\label{sec:2}
\subsection{Preliminaries}\label{sec:2.1}
Before presenting our main results, we introduce preliminaries following the presentation in \cite{YukselSaldiSICON17}, in particular, we introduce the characterizations laid out by Witsenhausen, through his {\it Intrinsic Model} \cite{wit75}; further characterizations and classifications of information structures are introduced comprehensively in \cite{YukselBasarBook}.
Suppose there is a pre-defined order in which the decision makers act. Such systems are called {\it sequential systems}.
The action and measurement spaces are standard Borel spaces, that is, Borel subsets of complete, separable and metric spaces. The {\it Intrinsic Model} for sequential teams is defined as follows.

\begin{itemize}
\item There exists a collection of {\it measurable spaces} \/$\{(\Omega, {\cal F}),
(\mathbb{U}^i,{\cal U}^i), (\mathbb{V}^i,{\cal V}^i), i \in {\mathcal{N}}\}$\@, specifying the system's distinguishable events, and control and measurement spaces, where $\mathcal{N}$ is either $\{1,\dots,N\}$ or $\mathbb{N}$ ($\mathbb{N}$ denotes the set of natural numbers). In this model (described in discrete time),  any action applied at any given time $t \in \mathcal{N}$ is regarded as applied by a decision maker DM$^{i}$ for $i \in \mathcal{N}$, who acts only once. The pair $(\Omega, {\cal F})$ is a
measurable space (on which an underlying probability may be defined). The pair $(\mathbb{U}^i, {\cal U}^i)$
denotes the measurable space from which the action, $u^i$, of decision maker $i$ is selected. The pair $(\mathbb{V}^i,{\cal V}^i)$ denotes the measurable observation/measurement space.

\item There is a \textit{measurement constraint} to establish the connection between the observation variables and the system's distinguishable events. The $\mathbb{V}^i$-valued observation variables are given by $v^i=h^i(\omega,{\underline u}^{[1,i-1]})$, where ${\underline u}^{[1,i-1]}=\{u^k, k \leq i-1\}$, $h^i$ are given measurable functions and $u^k$ denotes the action of DM$^k$. Hence, $v^i$ induces $\sigma(v^i)$ over $\Omega \times \prod_{k=1}^{i-1} \mathbb{U}^k$.
\item The set of admissible control laws $\underline{\gamma}= \{\gamma^1, \gamma^2, \dots\}$, also called
{\textit{designs}} or {\textit{policies}}, are measurable control functions, so that $u^i = \gamma^i(v^i)$. Let $\Gamma^i$ denote the set of all admissible policies for DM$^i$.
\end{itemize}
\begin{itemize}
\item There is a {\textit{probability measure}} $\mathbb{P}$ on $(\Omega, {\cal F})$ describing the probability space on which the system is defined.
\end{itemize}

Under this intrinsic model, a sequential team problem is {\textit{dynamic}} if the
information available to at least one DM is affected by the action of at least one other DM. A team problem is {\it static}, if for every decision maker the information available is only affected by exogenous disturbances; that is no other decision maker can affect the information of any given decision maker. \par
Information structures can also be categorized as {\it classical}, {\it quasi-classical} or {\it non-classical}. An Information Structure (IS) $\{v^i, i \in \mathcal{N} \}$ is {\it classical} if $v^i$ contains all of the information available to DM$^k$ for $k < i$. An IS is {\it quasi-classical} or {\it partially nested}, if whenever $u^k$, for some $k < i$, affects $v^i$ through the measurement function $h^i$, $v^i$ contains $v^k$ (that is $\sigma(v^k) \subset \sigma(v^i)$). An IS which is not partially nested is {\it nonclassical}.

\begin{itemize}
\item[\bf{($\mathcal{P}_{N}^{\prime}$)}]
Let $N=|{\cal N}|$ be the number of control actions taken, and each of these actions is taken by a different decision maker, where $\mathcal{N}:=\{1,\dots,N\}$. Let $\underline{\gamma}_{N} = \{\gamma^1, \cdots, \gamma^N\}$ and let ${\bf \Gamma}_{N} = \prod_{i}^{N} \Gamma^i$ be the space of admissible policies for the team with $N$-DMs. Assume an expected cost function is defined as
\begin{equation}\label{eq:1.1}
J_{N}(\underline{\gamma}_{N}) = E^{\underline{\gamma}_{N}}[c(\omega_{0},\underline{u}_{N})],
\end{equation}
for some Borel measurable cost function $c: \Omega_{0} \times \prod_{k=1}^{N} \mathbb{U}^k \to \mathbb{R}$ where $E^{\underline{\gamma}_{N}}[c(\omega_{0},\underline{u}_{N})] := E[c(\omega_{0},\gamma^1(v^1),\cdots,\gamma^N(v^N))]$ and  {we define $\omega_{0}$ as the cost function relevant exogenous random variable as $\omega_{0}:(\Omega,\mathcal{F}, \mathbb{P}) \to (\Omega_{0},{\cal B}(\Omega_0))$.} Here, we have the notation $\underline {u}_{N}:=\{u^i, i \in {\cal N}\}$ and $\mathcal{B}(\cdot)$ denotes the Borel $\sigma$-field.
\end{itemize}
\begin{definition}Team optimal solution for ($\mathcal{P}_{N}^{\prime}$) \cite{YukselBasarBook}.\\
For a given stochastic team problem with a given information
structure, a policy (strategy)\/ $N$\@-tuple\/ ${\underline \gamma}^{*}_{N}:=({\gamma^1}^*,\ldots, {\gamma^N}^*)\in {\bf \Gamma}_{N}$\@ is
 \textit{optimal} (\textit{team-optimal solution}) for ($\mathcal{P}_{N}^{\prime}$) if
\begin{equation*}
J_{N}({\underline \gamma}^*_{N})=\inf_{{{\underline \gamma}_{N}}\in {{\bf \Gamma}_{N}}}
J({{\underline \gamma}}_{N})=:J^*_{N}.
\end{equation*}  
\end{definition}
%

\begin{definition}Person-by-person optimal solution \cite{YukselBasarBook}.\\
For a given\/ $N$\@-DM stochastic team with a fixed information structure, an\/ $N$\@-tuple of strategies\/ ${\underline \gamma}^*_{N}:=({\gamma^1}^*,\ldots, {\gamma^N}^*)$\@ constitutes a \textit{person-by-person optimal} (pbp optimal) solution for ($\mathcal{P}_{N}^{\prime}$) if, for all\/ $\beta
\in \Gamma^i$\@ and all\/ $i\in {\cal N}$\@, the following inequalities hold:
\begin{equation*}\label{eq:1.3}
{J}^*_{N}:=J_{N}({\underline \gamma}^*_{N}) \leq J_{N}({\underline \gamma}^{-i*}_{N},
\beta),
\end{equation*}
where
$({\underline \gamma}_{N}^{-i*},\beta):= (\gamma^{1*},\ldots, \gamma^{(i-1)*},
\beta, \gamma^{(i+1)*},\ldots, \gamma^{N*})$.
\end{definition}

\par
To simplify notations, let for any\/ $1 \leq k \leq N$\@,\/ $\underline{\gamma}^{-k}_{N} := \{\gamma^i, i \in \{1,\cdots,N\} \setminus \{k\} \}$\@.

\begin{definition}\label{def:sta}
Stationary solution \cite{Radner}.\\
A policy \/ $\underline\gamma_{N}(.)$\@ is stationary if\/ $J(\underline{\gamma}_{N})<\infty$\@, and for all\/ $i=1,...,N$\@,\/ $\mathbb{P}$\@-almost surely
\begin{equation*}\label{eq:1.5}
 \nabla_{u^{i}} \mathbb{E}\bigg[c(\omega_{0},(\underline\gamma ^ {-i}_{N},u^{i}))\bigg|v ^{i}\bigg] \bigg|_{u^{i}=\gamma^{i}(v^{i})}=0,
\end{equation*}
where $\nabla_{u^{i}}$ denotes the gradient with respect to $u^{i}$.
 \end{definition}
In this subsection, without abuse of notations, we sometimes used $\gamma^{i}$ as $\gamma^{i}(v^{i})$. In the following, we present some related existing results for static teams with $N$ decision makers.
The following is known as Radner's theorem \cite{Radner}. Radner proposed the first result to connect the stationarity concept and global team optimality. 
\begin{theorem}\cite{Radner}\label{the:1}
If
\begin{itemize}
\item[(a)] $c(\omega_{0},\underline {u}_{N})$\@ is convex and differentiable in\/ $\underline u_{N}$ for $\mathbb{P}$\@- almost surely;
\item[(b)] $\inf\limits_{\underline\gamma_{N} \in \bf{\Gamma}_{N}}J_{N}(\underline\gamma_{N})>-\infty$\@;
\item[(c)] $J_{N}(.)$\@ is locally finite at\/ $\underline\gamma^{*}_{N}$\@  \cite{Radner};
\item[(d)] $\underline\gamma^{*}_{N}$\@ is stationary;
\end{itemize}
then\/ $\underline\gamma^{*}_{N}$\@ is globally optimal for ($\mathcal{P}_{N}^{\prime}$).
\end{theorem}
Radner's theorem fails in some applications because of the restrictive local finiteness assumption. Krainak et al \cite{KraMar82} relaxed assumptions and presented sufficient conditions for team optimality on static teams.
\begin{theorem}\cite{KraMar82}\label{the:2}
Assume that, for every fixed\/ $\omega_{0}$\@,\/ $c(\omega_{0},\underline u_{N})$\@ is convex differentiable in\/ $\underline u_{N}$\@.
Suppose (b) in Theorem \ref{the:1} holds.
Let\/ $\underline\gamma^{*}_{N} \in \bf{\Gamma}_{N}$\@, and assume that\/ $\mathbb{E}[c(\omega_{0},\underline\gamma^{*}_{N}(\underline{v}_{N}))]<\infty$\@. If, for all\/ $\underline\gamma_{N} \in \bf{\Gamma}_{N}$\@ with\/ $\mathbb{E}[c(\omega_{0},\underline\gamma_{N}(\underline v_{N}))]<\infty$\@,
\begin{equation}\label{eq:1.7}
\mathbb{E}\bigg[\sum_{i=1}^{N}c_{u^{i}}(\omega_{0},\underline\gamma^{*}_{N})(\gamma^{i}-\gamma^{i*})\bigg] \geq 0,
\end{equation}
where\/ $c_{u^{i}}(\omega_{0},\underline\gamma^{*}_{N})$\@ is the partial derivative of\/ $c(\omega_{0},\underline u_{N})$\@ with respect to\/ $u^{i}$\@ valued in\/ $\underline u_{N}=\underline \gamma^{*}_{N}$\@, then\/ $\underline\gamma^{*}_{N}$\@ is an optimal team policy for ($\mathcal{P}_{N}^{\prime}$). Moreover, if\/ $c(\omega_{0},\underline u_{N})$\@ is strictly convex in\/ $\underline u_{N}$\@ \/ $\mathbb{P}$-almost surely, then\/ $\underline\gamma^{*}_{N}$\@ is \/ $\mathbb{P}$\@-a.s. unique.
\end{theorem}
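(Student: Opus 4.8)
The plan is to obtain global team optimality of $\underline{\gamma}^*_N$ directly from the first-order (subgradient) inequality for convex differentiable functions, using the postulated variational inequality \eqref{eq:1.7} to annihilate the linear term, and invoking condition (b) of Theorem~\ref{the:1} to dispose of policies whose cost is not finite. First I would reduce to finite-cost policies: for an arbitrary $\underline{\gamma}_N \in \mathbf{\Gamma}_N$, if $\mathbb{E}[c(\omega_0,\underline{\gamma}_N(\underline{v}_N))]$ is not finite, it cannot be $-\infty$ (else $\inf_{\underline{\gamma}_N \in \mathbf{\Gamma}_N} J_N(\underline{\gamma}_N) = -\infty$, contradicting (b)), so $J_N(\underline{\gamma}_N) = +\infty \geq J_N(\underline{\gamma}^*_N)$ trivially because $J_N(\underline{\gamma}^*_N) < \infty$ by hypothesis; hence it suffices to treat policies with $\mathbb{E}[c(\omega_0,\underline{\gamma}_N(\underline{v}_N))] < \infty$, which is exactly the class for which \eqref{eq:1.7} is assumed.

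Next I would write down the pointwise convexity inequality: for each fixed $\omega_0$, since $\underline{u}_N \mapsto c(\omega_0,\underline{u}_N)$ is convex and differentiable on the (convex) product action space, $c(\omega_0,\underline{u}_N) - c(\omega_0,\underline{v}_N) \geq \sum_{i=1}^N c_{u^i}(\omega_0,\underline{v}_N)(u^i - v^i)$ for all $\underline{u}_N,\underline{v}_N$. Evaluating this along the realized observations with $\underline{v}_N = \underline{\gamma}^*_N(\underline{v}_N)$ and $\underline{u}_N = \underline{\gamma}_N(\underline{v}_N)$ gives, $\mathbb{P}$-almost surely,
\begin{equation*}
c(\omega_0,\underline{\gamma}_N) - c(\omega_0,\underline{\gamma}^*_N) \geq \sum_{i=1}^N c_{u^i}(\omega_0,\underline{\gamma}^*_N)\,(\gamma^i - \gamma^{i*}).
\end{equation*}
Writing $A$ for the left side and $B$ for the right side, $\mathbb{E}[A] = J_N(\underline{\gamma}_N) - J_N(\underline{\gamma}^*_N)$ is finite, so $\mathbb{E}[A^+] < \infty$; since $B \leq A$ gives $B^+ \leq A^+$, we get $\mathbb{E}[B^+] < \infty$ and $\mathbb{E}[B]$ is well defined in $[-\infty,\infty)$. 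By \eqref{eq:1.7}, $\mathbb{E}[B] \geq 0$, and $A \geq B$ then yields $\mathbb{E}[A] \geq \mathbb{E}[B] \geq 0$, i.e. $J_N(\underline{\gamma}_N) \geq J_N(\underline{\gamma}^*_N)$. As $\underline{\gamma}_N$ was arbitrary, $\underline{\gamma}^*_N$ is team optimal for ($\mathcal{P}_{N}^{\prime}$).

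For the uniqueness claim under strict convexity, I would take any other team optimal policy $\hat{\underline{\gamma}}_N$ (so $J_N(\hat{\underline{\gamma}}_N) = J_N(\underline{\gamma}^*_N) = J^*_N < \infty$) and test the midpoint policy $\underline{\mu}_N := \tfrac12(\underline{\gamma}^*_N + \hat{\underline{\gamma}}_N)$, which is admissible since its $i$-th component $\tfrac12(\gamma^{i*}(v^i) + \hat{\gamma}^i(v^i))$ is a measurable function of $v^i$ valued in the convex action space. If $\mathbb{P}\big(\underline{\gamma}^*_N(\underline{v}_N) \neq \hat{\underline{\gamma}}_N(\underline{v}_N)\big) > 0$, strict convexity forces $c(\omega_0,\underline{\mu}_N) < \tfrac12 c(\omega_0,\underline{\gamma}^*_N) + \tfrac12 c(\omega_0,\hat{\underline{\gamma}}_N)$ on that event and $\leq$ elsewhere; taking expectations (all finite) gives $J_N(\underline{\mu}_N) < J^*_N$, contradicting optimality. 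Hence $\underline{\gamma}^*_N = \hat{\underline{\gamma}}_N$ $\mathbb{P}$-almost surely.

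The step I expect to be the main obstacle is the integration in the second paragraph: the three quantities $c(\omega_0,\underline{\gamma}_N)$, $c(\omega_0,\underline{\gamma}^*_N)$ and $\sum_i c_{u^i}(\omega_0,\underline{\gamma}^*_N)(\gamma^i-\gamma^{i*})$ need not be integrable separately, so the expectation in \eqref{eq:1.7} cannot be split term by term; the argument circumvents this by working only with differences and exploiting finiteness of $J_N(\underline{\gamma}_N)$ and $J_N(\underline{\gamma}^*_N)$ together with condition (b) to eliminate the degenerate (non-finite cost) policies beforehand.
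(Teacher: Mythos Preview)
The paper does not supply its own proof of this theorem: it is quoted verbatim from \cite{KraMar82} as a preliminary result. The closest in-paper analogue is the proof of Theorem~\ref{the:4}, which explicitly follows the technique of \cite[Theorem~2]{KraMar82}. Your argument is correct, but it takes a different route from that one, so a brief comparison is in order.

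The Krainak--Speyer--Marcus argument (mirrored in the proof of Theorem~\ref{the:4}) works with the one-parameter family $\alpha\mapsto c(\omega_0,\underline\gamma^*_N+\alpha\,\underline\delta_N)$, shows that the difference quotients $h^{\omega_0}(\alpha)$ are monotone non-increasing as $\alpha\downarrow 0$ and bounded above by $h^{\omega_0}(1)$, and then invokes the monotone convergence theorem to justify $\lim_{\alpha\to 0}\mathbb{E}[h^{\omega_0}(\alpha)]=\mathbb{E}[\sum_i c_{u^i}(\omega_0,\underline\gamma^*_N)\delta^i]$; the conclusion $F(1)-F(0)\ge F'_+(0)\ge 0$ then follows from convexity of $F(\alpha)=\mathbb{E}[c(\omega_0,\underline\gamma^*_N+\alpha\,\underline\delta_N)]$. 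Your proof bypasses the limiting procedure entirely by applying the first-order (gradient) inequality for convex differentiable functions pointwise in $\omega_0$ and then integrating once. What your approach buys is brevity and a transparent handling of integrability via the $A\ge B\Rightarrow B^+\le A^+$ trick; what the difference-quotient approach buys is that it simultaneously identifies $\mathbb{E}[\sum_i c_{u^i}(\omega_0,\underline\gamma^*_N)\delta^i]$ as the right derivative $F'_+(0)$, which is the form needed for the infinite-$N$ extension in Theorem~\ref{the:4} where one must also pass to a limit in $N$.

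One small presentational point: when you write ``$\mathbb{E}[A]=J_N(\underline\gamma_N)-J_N(\underline\gamma^*_N)$ is finite, so $\mathbb{E}[A^+]<\infty$'', the implication really runs the other way. You should first note that condition~(b) together with $\mathbb{E}[c(\omega_0,\underline\gamma_N)]<\infty$ and $\mathbb{E}[c(\omega_0,\underline\gamma^*_N)]<\infty$ forces each of $c(\omega_0,\underline\gamma_N)$ and $c(\omega_0,\underline\gamma^*_N)$ to be integrable, hence $A$ is integrable, and only then conclude $\mathbb{E}[A^+]<\infty$ and the displayed identity for $\mathbb{E}[A]$. The logic is there in your final paragraph, but the ordering in the main argument could mislead a reader.
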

Since the set of admissible policies is generally uncountable, checking \eqref{eq:1.7} is difficult. Krainak et al \cite{KraMar82} further developed relaxed conditions under which stationarity of a policy implies its optimality. 
\begin{theorem}\cite{KraMar82}\label{the:3}
Assume that, for every fixed\/ $\omega_{0} \in \Omega_{0}$\@,\/ $c(\omega_{0},\underline u_{N})$\@ is a convex differentiable function of\/ $\underline u_{N}$\@ and suppose (b) in Theorem \ref{the:1} holds. Assume that\/ $\underline\gamma^{*}_{N} \in \bf{\Gamma}_{N}$\@ is a stationary policy. Let, for all\/ $\underline\gamma_{N} \in \bf{\Gamma}_{N}$\@ with\/ $\mathbb{E}[c(\omega_{0},\underline\gamma_{N}(\underline v_{N}))]<\infty$\@,
\begin{equation}\label{eq:1.1.8}
\mathbb{E}\bigg[c_{u^{i}}(\omega_{0},\underline\gamma^{*}_{N})(\gamma^{i}-\gamma^{i*})\bigg]<\infty~ \text{for}~i=1,...,N.
\end{equation}
Then\/ $\underline\gamma^{*}_{N}$\@ is a team optimal policy for ($\mathcal{P}_{N}^{\prime}$). If\/ $c(\omega_{0},\underline u_{N})$\@ is strictly convex in\/ $\underline u_{N}$\@,\/ $\mathbb{P}$\@-a.s., then\/ $\underline \gamma^{*}_{N}$\@ is unique.
\end{theorem}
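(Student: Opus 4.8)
The statement to prove is Theorem \ref{the:3} (Krainak et al), which says: under convexity and differentiability of $c(\omega_0, \underline u_N)$ in $\underline u_N$ for each fixed $\omega_0$, under $\inf_{\underline\gamma_N} J_N(\underline\gamma_N) > -\infty$, if $\underline\gamma^*_N$ is stationary and the cross terms $\mathbb{E}[c_{u^i}(\omega_0, \underline\gamma^*_N)(\gamma^i - \gamma^{i*})]$ are finite for all admissible $\underline\gamma_N$ with finite cost, then $\underline\gamma^*_N$ is team optimal; and if $c$ is strictly convex then it's unique.

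The plan is to reduce Theorem~\ref{the:3} to Theorem~\ref{the:2} by showing that a stationary policy satisfying the finiteness condition \eqref{eq:1.1.8} automatically fulfills the variational inequality \eqref{eq:1.7}, which is the only substantive hypothesis of Theorem~\ref{the:2} not already assumed here. The first step is to rewrite stationarity in conditional form. By Definition~\ref{def:sta}, for each $i\in\{1,\dots,N\}$ we have, $\mathbb{P}$-almost surely,
\[
\nabla_{u^{i}}\mathbb{E}\big[c(\omega_{0},(\underline\gamma^{-i*}_{N},u^{i}))\,\big|\,v^{i}\big]\Big|_{u^{i}=\gamma^{i*}(v^{i})}=0.
\]
Since $c(\omega_{0},\cdot)$ is convex and differentiable in $\underline u_{N}$ for $\mathbb{P}$-a.e.\ $\omega_{0}$, the one-sided difference quotients of $u^{i}\mapsto c(\omega_{0},(\underline\gamma^{-i*}_{N},u^{i}))$ along each coordinate direction are monotone in the increment; applying monotone convergence to the regular conditional distribution of $\omega_{0}$ given $v^{i}$ lets the gradient be passed inside the conditional expectation, yielding
\[
\mathbb{E}\big[c_{u^{i}}(\omega_{0},\underline\gamma^{*}_{N})\,\big|\,v^{i}\big]=0\qquad\mathbb{P}\text{-a.s.}
\]

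Next I would verify \eqref{eq:1.7}. Fix any $\underline\gamma_{N}\in{\bf \Gamma}_{N}$ with $\mathbb{E}[c(\omega_{0},\underline\gamma_{N}(\underline v_{N}))]<\infty$. For each $i$ the increment $\gamma^{i}(v^{i})-\gamma^{i*}(v^{i})$ is $\sigma(v^{i})$-measurable, and by hypothesis \eqref{eq:1.1.8} the product $c_{u^{i}}(\omega_{0},\underline\gamma^{*}_{N})(\gamma^{i}-\gamma^{i*})$ is well-defined and integrable (splitting into positive and negative parts, using the first-order convexity bound $c(\omega_{0},(\underline\gamma^{-i*}_{N},\gamma^{i}))-c(\omega_{0},(\underline\gamma^{-i*}_{N},\gamma^{i*}))\ge c_{u^{i}}(\omega_{0},\underline\gamma^{*}_{N})(\gamma^{i}-\gamma^{i*})$ to control one side). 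The tower property together with the conditional stationarity identity then gives
\[
\mathbb{E}\big[c_{u^{i}}(\omega_{0},\underline\gamma^{*}_{N})(\gamma^{i}-\gamma^{i*})\big]=\mathbb{E}\Big[\mathbb{E}\big[c_{u^{i}}(\omega_{0},\underline\gamma^{*}_{N})\,\big|\,v^{i}\big](\gamma^{i}-\gamma^{i*})\Big]=0,
\]
and summing over $i=1,\dots,N$ yields $\mathbb{E}\big[\sum_{i=1}^{N}c_{u^{i}}(\omega_{0},\underline\gamma^{*}_{N})(\gamma^{i}-\gamma^{i*})\big]=0\ge 0$, i.e.\ \eqref{eq:1.7} holds. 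Since (b) of Theorem~\ref{the:1} is assumed and $\mathbb{E}[c(\omega_{0},\underline\gamma^{*}_{N}(\underline v_{N}))]<\infty$ by Definition~\ref{def:sta}, Theorem~\ref{the:2} applies and $\underline\gamma^{*}_{N}$ is team optimal for ($\mathcal{P}_{N}^{\prime}$).

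For uniqueness under strict convexity of $c(\omega_{0},\cdot)$ $\mathbb{P}$-a.s., the conclusion is inherited directly from the corresponding clause of Theorem~\ref{the:2}. Alternatively, if $\underline\gamma_{N}$ is another optimal policy, convexity of $J_{N}$ (from convexity of $c$ and linearity of expectation, assuming action spaces permit the midpoint $\tfrac12(\underline\gamma^{*}_{N}+\underline\gamma_{N})$ as an admissible policy) forces $J_{N}(\tfrac12(\underline\gamma^{*}_{N}+\underline\gamma_{N}))=J^{*}_{N}$, and strict convexity then forces $\underline\gamma_{N}=\underline\gamma^{*}_{N}$ $\mathbb{P}$-a.s.

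The main obstacle is not the logical structure but the two interchange arguments: passing the gradient through the conditional expectation in the first step, and invoking the tower property in the second step under only the one-sided finiteness \eqref{eq:1.1.8} rather than full integrability. The former is handled cleanly by monotonicity of difference quotients of convex functions. The latter is precisely where condition \eqref{eq:1.1.8} earns its keep: one must decompose the product into a part bounded below by an integrable function (via the first-order convexity inequality, whose right-hand side has finite expectation because both $\mathbb{E}[c(\omega_{0},\underline\gamma_{N})]$ and $\mathbb{E}[c(\omega_{0},\underline\gamma^{*}_{N})]$ are finite) and a part handled by \eqref{eq:1.1.8} — this is the technical point that allows Theorem~\ref{the:3} to relax the hard-to-check inequality \eqref{eq:1.7} of Theorem~\ref{the:2} to a stationarity plus finiteness condition.
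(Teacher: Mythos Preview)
The paper does not actually prove Theorem~\ref{the:3}: it is stated in the preliminaries as a result of Krainak, Speyer, and Marcus~\cite{KraMar82} and is simply cited without proof. So there is no ``paper's own proof'' to compare against.

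That said, your proposal reproduces exactly the argument one finds in~\cite{KraMar82}: reduce to Theorem~\ref{the:2} by showing that stationarity plus the finiteness condition~\eqref{eq:1.1.8} together force each summand in~\eqref{eq:1.7} to vanish via the tower property, since $\gamma^{i}-\gamma^{i*}$ is $\sigma(v^{i})$-measurable and the conditional gradient is zero. Your identification of the two delicate points (interchanging $\nabla_{u^{i}}$ with $\mathbb{E}[\,\cdot\,|v^{i}]$ via monotone difference quotients, and justifying the tower property under only one-sided integrability using the first-order convexity lower bound) is accurate and matches the original treatment. The proof is correct.
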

Furthermore, \eqref{eq:1.1.8} can be replaced by the following more checkable conditions \cite{YukselBasarBook}: Let $\Gamma^{i}$ be Hilbert space for each $i=1,...,N$ and $\mathbb{E}[c(\omega_{0},\underline\gamma_{N}(\underline v_{N}))]<\infty$\@ for all $\underline\gamma_{N} \in \bf{\Gamma}_{N}$. Moreover, let
\begin{equation}
\mathbb{E}\bigg[c_{u^{i}}(\omega_{0},\underline\gamma^{*}_{N})\bigg|v^{i}\bigg]\in \Gamma^{i},~i=1,...,N.
\end{equation}
The above conditions follows directly from \eqref{eq:1.1.8} when $\Gamma^{i}$ is a Hilbert space for all $i=1,2,\dots,N$. This condition can be checked for some applications; for example, LQ teams \cite{YukselBasarBook}.

\subsection{Problem statement}\label{sec:2.2}

\begin{itemize}
\item[\bf{($\mathcal{P}_{\infty}$)}]
Consider a team with countably infinitely many decision makers. Let $\bf{\Gamma}=\prod_{i \in \mathbb{N}} \Gamma^{i}$ be a countable but an infinite product policy space. We assume $\mathbb{U}^{i}=\mathbb{R}^{n}$, and $\mathbb{V}^{i}=\mathbb{R}^{m}$ for all $i \in \mathbb{N}$, where $n$ and $m$ are positive integers. Let $c:\Omega_{0} \times \mathbb{R}^{n}\times \mathbb{R}^{n} \rightarrow \mathbb{R}_{+}$, and the expected cost be
\begin{equation}\label{eq:2.5.5}
J(\underline{\gamma})=\limsup\limits_{N\rightarrow \infty} \frac{1}{N} \mathbb{E}^{\underline{\gamma}}\bigg[\sum_{i=1}^{N}c(\omega_{0},u^{i},\frac{1}{N}\sum_{p=1}^{{N}}u^{p})\bigg],
\end{equation}
where we denote 
$\mathbb{E}^{\underline{\gamma}}[\sum_{i=1}^{N}c(\omega_{0},u^{i},\frac{1}{N}\sum_{p=1}^{{N}}u^{p})]:=\mathbb{E}\left[\sum_{i=1}^{N}c(\omega_{0},\gamma^{i}(v^{i}),\frac{1}{N}\sum_{p=1}^{{N}}\gamma^{p}(v^{p}))\right]$.
\end{itemize}
\begin{definition}Team optimal solution for ($\mathcal{P}_{\infty}$).\\
For a given stochastic team problem with a given information
structure, a policy\/ ${\underline \gamma}^{*}:=({\gamma^{1*}},\gamma^{2*},\ldots)\in {\bf \Gamma}$\@ is
 \textit{optimal} for ($\mathcal{P}_{\infty}$) if
\begin{equation*}
J({\underline \gamma}^*)=\inf_{{{\underline \gamma}}\in {{\bf \Gamma}}}
J({{\underline \gamma}})=:J^* .
\end{equation*}   
\end{definition}
Our goal in this paper is to establish conditions for a team policy to be optimal, and also connect the optimal cost and policies for {($\mathcal{P}_{\infty}$)} and {($\mathcal{P}_{N}$)}. To this end, we re-define {($\mathcal{P}_{N}$)} for our problem statement as follows:
\begin{itemize}
\item[\bf{($\mathcal{P}_{N}$)}]
Let $N=|{\cal N}|$ be the number of control actions taken and $\underline{\gamma}_{N} = \{\gamma^1, \cdots, \gamma^N\}$ and let ${\bf \Gamma}_{N} = \prod_{i}^{N} \Gamma^i$ space of admissible policies for the team with $N$-DMs. Assume an expected cost function is defined as
\begin{equation}\label{eq:2.6.6}
J_{N}(\underline{\gamma}_{N}) = \frac{1}{N} \mathbb{E}^{\underline{\gamma}_{N}}\bigg[\sum_{i=1}^{N}c(\omega_{0},u^{i},\frac{1}{N}\sum_{p=1}^{{N}}u^{p})\bigg].
\end{equation}
\end{itemize}
We will investigate the relations between the sequence of solutions to \eqref{eq:2.6.6} and the solution to \eqref{eq:2.5.5}. We note that our main result is on the connection between {($\mathcal{P}_{\infty}$)} and {($\mathcal{P}_{N}$)}.

\section{Optimal policies for teams with infinitely many decision makers}\label{sec:3}

\subsection{Sufficient conditions of optimality}
In the following, we propose sufficient conditions of team optimality for $(\mathcal{P}_{\infty})$. We often follow \cite{KraMar82}, and the result is an extension of  \cite{KraMar82} to a general setup of static teams with countably infinite number of decision makers. We also note a related analysis in \cite{mahajan2013static}. We will use the following theorem for LQ static teams with countably infinite number of decision makers (see Section \ref{Ex:2}).
\begin{assumption}
Let 
\begin{itemize}
\item[{(A1)}] $c(\omega_{0},u^{i},\frac{1}{N}\sum_{p=1}^{{N}}u^{p})$\@ be a $\mathbb{R}_{+}$-valued jointly convex function of second and third arguments and differentiable in\/ $u^{i}$\@ with continuous partial derivatives, for every\/ $\omega_{0} \in \Omega_{0}$\@.
\item[(A2)] for some\/ $\underline{\gamma}^{*} \in \bf{\Gamma}$\@,
\begin{flalign}\label{eq:2.1}
\lim\limits_{N\rightarrow \infty}& \frac{1}{N}\sum_{i=1}^{N} \mathbb{E}^{\underline{\gamma}^{*}}\bigg[c(\omega_{0},u^{i},\frac{1}{N}\sum_{p=1}^{{N}}u^{p})\bigg]<\infty.
\end{flalign}
\end{itemize}
\end{assumption}
We note that the cost function is differentiable in $u^{i}$ which means that the cost is totally differentiable in $u^{i}$, i.e.,  $\frac{d}{du^{i}}c(\omega_{0},u^{i},\frac{1}{N}\sum_{p=1}^{N}u^{p})=\frac{\partial}{\partial u^{i}}c(\omega_{0},u^{i},\mu_{N})+\frac{1}{N}\frac{\partial}{\partial\mu_{N}}c(\omega_{0},u^{i},\mu_{N})$.
\begin{theorem}\label{the:4}
 Assume (A1) holds and (A2) holds for $\underline\gamma^{*}\in\bf{\Gamma}$. If for all\/ $\underline\gamma \in \bf{\Gamma}$\@ with\/ $J(\underline{\gamma}) < \infty$\@,
\begin{equation}\label{eq:2.2}
\limsup\limits_{N\rightarrow \infty} \frac{1}{N}\mathbb{E}\bigg[\sum_{i=1}^{N}\sum_{k=1}^{N}c_{u^{k}}(\omega_{0},\gamma^{i*},\mu^{*}_{N})(\gamma^{k}-\gamma^{k*})\bigg] \geq 0,
\end{equation}
where $\mu^{*}_{N}=\frac{1}{N}\sum_{p=1}^{N}\gamma^{p*}(v^{p})$, then\/ $\underline\gamma^{*}$\@ is a globally optimal team policy for ($\mathcal{P}_{\infty}$).
\end{theorem}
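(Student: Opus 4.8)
The plan is to carry out the argument of Theorem~\ref{the:2} (of \cite{KraMar82}) at the level of each $N$-truncation of the cost and then pass to the limit, using the joint convexity in (A1) together with the convergence of the normalized cost of $\underline\gamma^{*}$ granted by (A2). First I would dispose of the trivial case: if $J(\underline\gamma)=\infty$ then, since (A2) gives $J(\underline\gamma^{*})<\infty$, the desired inequality $J(\underline\gamma^{*})\le J(\underline\gamma)$ is immediate; so from now on fix $\underline\gamma\in{\bf \Gamma}$ with $J(\underline\gamma)<\infty$.

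Next, fix $N$ and, suppressing the dependence on $\omega_{0}$ and on $\underline v_{N}$, write $\gamma^{i}$ for $\gamma^{i}(v^{i})$ and $\mu_{N}=\frac{1}{N}\sum_{p=1}^{N}\gamma^{p}$. The random map $(u^{1},\dots,u^{N})\mapsto\sum_{i=1}^{N}c(\omega_{0},u^{i},\frac{1}{N}\sum_{p=1}^{N}u^{p})$ is a composition of the jointly convex, differentiable function $c(\omega_{0},\cdot,\cdot)$ (by (A1)) with affine maps, hence is convex and differentiable in $(u^{1},\dots,u^{N})$ for every $\omega_{0}$; by the chain rule its derivative in the $k$-th coordinate is $\sum_{i=1}^{N}c_{u^{k}}(\omega_{0},u^{i},\mu_{N})$ (recall $u^{k}$ enters the $i$-th summand both directly, when $i=k$, and through $\mu_{N}$ for every $i$, which is why a double sum appears). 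Applying the first-order characterization of convexity to this function, with the realization of $\underline\gamma$ as the point and that of $\underline\gamma^{*}$ as the base point, gives, $\mathbb{P}$-a.s.,
\begin{equation*}
\sum_{i=1}^{N} c(\omega_{0},\gamma^{i},\mu_{N}) \ \ge\ \sum_{i=1}^{N} c(\omega_{0},\gamma^{i*},\mu_{N}^{*}) + \sum_{i=1}^{N}\sum_{k=1}^{N} c_{u^{k}}(\omega_{0},\gamma^{i*},\mu_{N}^{*})\,(\gamma^{k}-\gamma^{k*}).
\end{equation*}
Taking expectations --- legitimate for all large $N$, where the relevant expectations are finite by $c\ge 0$, $J(\underline\gamma)<\infty$, $J(\underline\gamma^{*})<\infty$ and the integrability implicit in \eqref{eq:2.2} --- and dividing by $N$, I obtain $\frac{1}{N}\mathbb{E}[\sum_i c(\omega_{0},\gamma^{i},\mu_{N})]\ge\frac{1}{N}\mathbb{E}[\sum_i c(\omega_{0},\gamma^{i*},\mu_{N}^{*})]+D_{N}$, where $D_{N}$ denotes the normalized cross term appearing in \eqref{eq:2.2}.

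Finally I would take $\limsup_{N\to\infty}$ on both sides. The left-hand side is, by definition, $J(\underline\gamma)$; by (A2) the sequence $\frac{1}{N}\mathbb{E}[\sum_i c(\omega_{0},\gamma^{i*},\mu_{N}^{*})]$ converges to a finite limit, namely $J(\underline\gamma^{*})$, so (adding a convergent sequence) $\limsup_{N}\big(\frac{1}{N}\mathbb{E}[\sum_i c(\omega_{0},\gamma^{i*},\mu_{N}^{*})]+D_{N}\big)=J(\underline\gamma^{*})+\limsup_{N}D_{N}$, and by hypothesis \eqref{eq:2.2} we have $\limsup_{N}D_{N}\ge 0$. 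Monotonicity of $\limsup$ then yields $J(\underline\gamma)\ge J(\underline\gamma^{*})+\limsup_{N}D_{N}\ge J(\underline\gamma^{*})$, which is the assertion. The step that requires care is precisely this last one: because $J$ is a $\limsup$ rather than a limit, one must keep the convergent $\underline\gamma^{*}$-term separated from the $D_{N}$-term so as not to split a $\limsup$ of a sum illegitimately, and one must check that the expectations entering the pointwise subgradient inequality are well defined so that taking expectations is valid; both are routine but must be argued, exactly as in \cite{KraMar82}.
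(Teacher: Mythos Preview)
Your proof is correct and follows essentially the same strategy as the paper's: use the joint convexity (A1) to obtain a pointwise gradient inequality for each fixed $N$, take expectations, and then exploit (A2)---which guarantees that the $\underline\gamma^{*}$-cost sequence has a genuine limit, not merely a $\limsup$---so that the $\limsup$ of the sum can be split legitimately. The paper, following \cite{KraMar82}, additionally parametrizes the segment by $\alpha\in(0,1]$ and invokes monotone convergence to justify interchanging expectation and directional derivative, a step you bypass by applying the first-order convexity inequality directly; the resulting chains of inequalities (\eqref{eq:2.5}--\eqref{eq:2.11} in the paper versus your final paragraph) are otherwise the same.
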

\begin{proof}
Under (A1), the required derivatives in \eqref{eq:2.2} in the direction of $u^{i}$ exist and the chain rule of derivatives can be applied  since this implies that the cost function is Fr\'echet differentiable in $u^{i}$ \cite{Fleming}. Now, we use the convexity property to justify interchanging the expectation and the derivation similar to {\cite[Theorem 2]{KraMar82}}, then we use \eqref{eq:2.1} and \eqref{eq:2.2} to establish the global optimality of $\underline\gamma^{*}$ for ($\mathcal{P}_{\infty}$). 
Under (A1), we have for every $\alpha \in (0,1]$,
\begin{flalign*}
\sum_{i=1}^{N}c(\omega_{0},& \gamma^{i*}+\alpha \delta^{i},\mu^{*}_{N}+ \frac{\alpha}{N}\sum_{p=1}^{{N}} \delta^{p}) - c(\omega_{0}, \gamma^{i*},\mu^{*}_{N})\\&\leq\alpha \sum_{i=1}^{N}\left(c(\omega_{0}, \gamma^{i},\mu_{N})-c(\omega_{0}, \gamma^{i*},\mu^{*}_{N})\right),
\end{flalign*}
where  $\mu_{N}=\frac{1}{N}\sum_{p=1}^{N}\gamma^{p}(v^{p})$ and $\delta^{i}=\gamma^{i}-\gamma^{i*}$. Let
\begin{flalign*}
h^{\omega_{0}}_{N}(\alpha):=\frac{1}{\alpha}\bigg[\frac{1}{N}&\sum_{i=1}^{N}c(\omega_{0},\gamma^{i*}+\alpha \delta^{i},\mu^{*}_{N}+\frac{\alpha}{N}\sum_{p=1}^{N}\delta^{p})\\& -c(\omega_{0}, \gamma^{i*},\mu^{*}_{N})\bigg].
\end{flalign*}
Hence, \cite[Proposition 6.3.2]{Dud02} implies that $h^{\omega_{0}}_{N}(\alpha)$ is a monotone non-increasing function as $\alpha \to 0$ in $\alpha \in[0, 1]$ and bounded from above by $h^{\omega_{0}}_{N}(1)$. Thus, by~\cite[Corollary 6.3.3]{Dud02}, $h_{+,N}^{\prime}(\omega_{0},0):=\lim_{\alpha \to 0}h^{\omega_{0}}_{N}(\alpha)$ exists. Since $h^{\omega_{0}}_{N}(\alpha)$ is a monotonic non-increasing function as $\alpha \to 0$ in $\alpha \in[0, 1]$ and bounded above by $h^{\omega_{0}}_{N}(1)$, and since $J(\underline{\gamma}^{*})$ and $J(\underline{\gamma})$ are finite, we can choose $N$ large enough such that $\mathbb{E}(h_{N}^{\omega_{0}}(1))<\infty$.
Now, we can use the monotone convergence theorem (see \cite[page. 170]{HernandezLermaMCP}) to interchange the limit and the expectation
\begin{equation}\label{eq:2.4}
\lim_{\alpha \to 0}\mathbb{E}(h^{\omega_{0}}_{N}(\alpha))=\mathbb{E}(\lim_{\alpha \to 0}h^{\omega_{0}}_{N}(\alpha))=\mathbb{E}(h_{+,N}^{\prime}(\omega_{0},0)).
\end{equation} 
 From \cite[Lemma 1]{KraMar82}, we have $\mathbb{E}(h_{+,N}^{\prime}(\omega_{0},0))=\frac{1}{N}\mathbb{E}(\sum_{i=1}^{N}\sum_{k=1}^{N}c_{u^{k}}(\omega_{0},\gamma^{i*},\mu^{*}_{N})\delta^{k})$.
Define 
\begin{equation*}
F_{\underline\gamma_{N}}^{N}(\alpha):=\displaystyle\frac{1}{N}\mathbb{E}\bigg(\sum_{i=1}^{N}c(\omega_{0}, \gamma^{i*}+\alpha \delta^{i},\mu^{*}_{N}+\frac{\alpha}{N}\sum_{p=1}^{N} \delta^{p})\bigg).
\end{equation*}
Note that  $F_{\underline\gamma_{N}}^{N}(\alpha)$ exists for $\alpha \in [0,1]$ since $\mathbb{E}(h_{N}^{\omega_{0}}(\alpha)) \leq \mathbb{E}(h_{N}^{\omega_{0}}(1))<\infty$, and $\mathbb{E}(\frac{1}{N}\sum_{i=1}^{N}c(\omega_{0},\gamma^{i*},\mu^{*}_{N}))<\infty$.
Therefore, one can write $F ^{\prime N}_{\underline\gamma_{N}^{+}}(0)=\displaystyle\lim_{\alpha \to 0}\mathbb{E}(h^{\omega_{0}}(\alpha))$, and
\begin{flalign*}
F ^{\prime N}_{\underline\gamma_{N}^{+}} (0)=&\frac{1}{N}\mathbb{E}\bigg(\sum_{i=1}^{N}\sum_{k=1}^{N}c_{u^{k}}(\omega_{0},\gamma^{i*},\mu^{*}_{N})(\gamma^{k}-\gamma^{k*})\bigg).
\end{flalign*}
Thus, we can write
\begin{flalign}
 J(\underline{\gamma})-J(\underline{\gamma}^{*})&\ =\limsup\limits_{N\rightarrow \infty} F_{\underline\gamma_{N}}^{N}(1)-\limsup\limits_{N\rightarrow \infty}  F_{\underline\gamma_{N}}^{N}(0)\label{eq:2.5}\\
 &\ =\limsup\limits_{N\rightarrow \infty} F_{\underline\gamma_{N}}^{N}(1)-\liminf\limits_{N\rightarrow \infty}  F_{\underline\gamma_{N}}^{N}(0)\label{eq:2.6}\\
&\ \geq \limsup\limits_{N\rightarrow \infty} \frac{ F_{\underline\gamma_{N}}^{N}(1)-F_{\underline\gamma_{N}}^{N}(0)}{1}\label{eq:2.10}\\
&\ \geq \limsup\limits_{N\rightarrow \infty} F ^{\prime N}_{\underline\gamma_{N}^{+}} (0)\geq 0\label{eq:2.11},
\end{flalign}
where \eqref{eq:2.6} follows from (A2) and \eqref{eq:2.1}, and $-\liminf\limits_{N\rightarrow \infty}a_{N}=\limsup\limits_{N\rightarrow \infty} -a_{N}$, $\limsup\limits_{N\rightarrow \infty}a_{N}+\limsup\limits_{N\rightarrow \infty}b_{N} \geq \limsup\limits_{N\rightarrow \infty} (a_{N}+b_{N})$ imply \eqref{eq:2.10}, and \eqref{eq:2.11} holds since $F_{\underline\gamma_{N}}^{N}(.)$ is a convex function using \cite[Corollary 6.3.3]{Dud02}, and since $a_{N} \geq b_{N}$ then $\limsup\limits_{N\rightarrow \infty}a_{N} \geq \limsup\limits_{N\rightarrow \infty}b_{N}$. Finally, the last inequality follows from \eqref{eq:2.2}; hence, $J(\underline\gamma)-J(\underline\gamma^{*}) \geq 0$, and the proof is completed.
\end{proof}
In some applications, \eqref{eq:2.2} can be difficult to check since it must be satisfied for all\/ $\underline\gamma \in \bf{\Gamma}$\@ with\/ $J(\underline{\gamma}) < \infty$\@. In the next section, we address this issue by introducing a constructive approach for static teams with countably infinite number of decision makers as a limit of a sequence of team optimal policies of the corresponding static teams with finite number of decision makers. 
In the following, we propose sufficient conditions to approximate the optimal cost and a team optimal policy for static teams with countably infinite number of decision makers using the optimal cost and an optimal policy for static teams with $N$ decision makers.  {We note that our first result here is based on \cite[Theorem 1]{mahajan2013static}, which considered an equality. We denote $\underline\gamma|_{N} \in \bf{\Gamma_{N}}$ as a restriction of $\underline\gamma\in \bf{\Gamma}$ to the first $N$ components}.



\begin{theorem}\label{the:5}
Let \/ $\underline\gamma^{*}_{N}\in \bf{\Gamma}_{N}$\@ be an optimal policy for ($\mathcal{P}_{N}$) as \eqref{eq:2.6.6} (see \cite{KraMar82, gupta2014existence, YukselSaldiSICON17} for sufficient conditions). If there exists\/ $\underline\gamma^{*} \in \bf\Gamma$\@, with\/ $J(\underline{\gamma}^{*}) < \infty$\@, satisfying
\begin{equation}\label{eq:2.14}
\limsup\limits_{N\rightarrow \infty}J_{N}(\underline\gamma^{*}_{N})\geq J(\underline\gamma^{*}),
\end{equation}
then $\underline\gamma^{*}$ is a globally team optimal policy for ($\mathcal{P}_{\infty}$).
\end{theorem}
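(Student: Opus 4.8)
The plan is to run a short sandwiching argument between the finite-$N$ and the infinite problems. The key structural observation I would use is that, for each fixed $N$, the functional $J_{N}$ in \eqref{eq:2.6.6} depends only on the first $N$ components of a policy; consequently, for any $\underline\gamma \in {\bf \Gamma}$ its truncation $\underline\gamma|_{N} \in {\bf \Gamma}_{N}$ satisfies $J_{N}(\underline\gamma|_{N}) = \frac{1}{N}\mathbb{E}^{\underline\gamma}[\sum_{i=1}^{N} c(\omega_{0}, u^{i}, \frac{1}{N}\sum_{p=1}^{N} u^{p})]$, which is exactly the $N$-th term of the sequence whose $\limsup$ defines $J(\underline\gamma)$ in \eqref{eq:2.5.5}. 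Hence $\limsup_{N\to\infty} J_{N}(\underline\gamma|_{N}) = J(\underline\gamma)$.

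First I would fix an arbitrary admissible $\underline\gamma \in {\bf \Gamma}$; the claim is trivial if $J(\underline\gamma) = \infty$, so I may assume $J(\underline\gamma) < \infty$. Since $\underline\gamma^{*}_{N}$ is optimal for $(\mathcal{P}_{N})$ and $\underline\gamma|_{N}$ is admissible for $(\mathcal{P}_{N})$, we have $J_{N}(\underline\gamma^{*}_{N}) \leq J_{N}(\underline\gamma|_{N})$ for every $N$. Taking $\limsup_{N\to\infty}$ of both sides — using that $\limsup$ is monotone with respect to termwise inequalities — gives $\limsup_{N\to\infty} J_{N}(\underline\gamma^{*}_{N}) \leq \limsup_{N\to\infty} J_{N}(\underline\gamma|_{N}) = J(\underline\gamma)$. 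Combining this with the standing hypothesis \eqref{eq:2.14}, i.e.\ $\limsup_{N\to\infty} J_{N}(\underline\gamma^{*}_{N}) \geq J(\underline\gamma^{*})$, yields $J(\underline\gamma^{*}) \leq J(\underline\gamma)$. Since $\underline\gamma$ was arbitrary (the case $J(\underline\gamma)=\infty$ being vacuous), this shows $J(\underline\gamma^{*}) \leq \inf_{\underline\gamma \in {\bf \Gamma}} J(\underline\gamma)$; the reverse inequality holds because $\underline\gamma^{*} \in {\bf \Gamma}$ with $J(\underline\gamma^{*})<\infty$, so $J(\underline\gamma^{*}) = J^{*}$ and $\underline\gamma^{*}$ is globally team optimal for $(\mathcal{P}_{\infty})$.

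I do not expect a serious obstacle here: the theorem is essentially a bookkeeping reduction showing that global optimality of the limit policy follows from the single scalar inequality \eqref{eq:2.14} — precisely the quantity that the more substantive hypotheses of Theorem \ref{the:6} (uniform integrability and uniform convergence of the finite-$N$ optimal policies) are tailored to establish. The only points needing a little care are (a) confirming the identification $J_{N}(\underline\gamma|_{N}) = \frac{1}{N}\mathbb{E}^{\underline\gamma}[\sum_{i=1}^{N} c(\omega_{0},u^{i},\frac{1}{N}\sum_{p=1}^{N}u^{p})]$, i.e.\ that $J_{N}$ genuinely ignores components past the $N$-th, and (b) that all partial sums and their $\limsup$'s are unambiguously defined; here the assumption $c \geq 0$ is convenient, since it keeps every quantity valued in $[0,\infty]$ and removes any subtlety in passing the termwise inequality through the $\limsup$.
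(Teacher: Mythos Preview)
Your proof is correct and follows essentially the same approach as the paper's: both start from hypothesis \eqref{eq:2.14}, use optimality of $\underline\gamma^{*}_{N}$ together with the identification $\{\underline\gamma|_{N}:\underline\gamma\in{\bf\Gamma}\}={\bf\Gamma}_{N}$, and finish with the elementary fact that $\limsup_{N}\inf_{\underline\gamma}\leq \inf_{\underline\gamma}\limsup_{N}$. The only cosmetic difference is that the paper writes this as a single chain of inequalities ending with the $\limsup$--$\inf$ interchange, whereas you unwind that step by fixing $\underline\gamma$ first and taking the infimum at the end.
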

\begin{proof}
We have
\begin{flalign}
J(\underline\gamma^{*})&\leq\limsup\limits_{N\rightarrow \infty} \frac{1}{N} \sum_{i=1}^{N} \mathbb{E}^{\underline{\gamma}^{*}_{N}}\left(c(\omega_{0},u^{i},\mu_{N})\right)\label{eq:1.2.16}\\
&=\limsup\limits_{N\rightarrow \infty} \inf_{\underline\gamma_{N} \in \bf{\Gamma}_{N}}\frac{1}{N} \sum_{i=1}^{N} \mathbb{E}^{\underline{\gamma}_{N}}\left(c(\omega_{0},u^{i},\mu_{N})\right)\label{eq:1.2.17}\\
&=\limsup\limits_{N\rightarrow \infty} \inf_{\underline\gamma \in \bf{\Gamma}}\frac{1}{N} \sum_{i=1}^{N} \mathbb{E}^{\underline{\gamma}}\left(c(\omega_{0},u^{i},\mu_{N})\right)\label{eq:1.2.18}\\
&\leq\inf_{\underline\gamma \in \bf{\Gamma}}\limsup\limits_{N\rightarrow \infty} \frac{1}{N} \sum_{i=1}^{N} \mathbb{E}^{\underline{\gamma}}\left(c(\omega_{0},u^{i},\mu_{N})\right)\label{eq:3.11}\\
&=\inf_{\underline\gamma \in \bf{\Gamma}}J(\underline\gamma)\nonumber,
\end{flalign}
where $\mu_{N}:=\frac{1}{N}\sum_{p=1}^{N}u^{p}$ and \eqref{eq:1.2.16} follows from \eqref{eq:2.14}, and \eqref{eq:1.2.17} is true since $\gamma^{*}_{N}$ is a team optimal policy for ($\mathcal{P}_{N}$) (see \eqref{eq:2.6.6}). Furthermore, \eqref{eq:1.2.18} follows from the fact that $[\underline\gamma|_{N}:\underline\gamma \in \bf\Gamma]=\bf\Gamma_{N}$, where $\underline\gamma|_{N}$ is $\underline\gamma$ restricted to the first $N$ components.
\end{proof}
\begin{remark}\label{lem:1.1.1}
Under (A2), one can replace \eqref{eq:2.14} with
\begin{equation}\label{eq:1.1.17}
\scalemath{0.92}{\limsup\limits_{N\rightarrow \infty}  \frac{1}{N} \sum_{i=1}^{N}\bigg[\mathbb{E}^{\underline{\gamma}^{*}_{N}}\bigg(c(\omega_{0},u^{i},\mu_{N})\bigg)-\mathbb{E}^{\underline{\gamma}^{*}}\bigg(c(\omega_{0},u^{i},\mu_{N})\bigg)\bigg]\geq 0}.
\end{equation}
\end{remark}
The above theorem and remark will be useful for some applications (see for example Section \ref{Ex:4}).
\subsection{Asymptotically optimal policies as a limit of finite team optimal policies}
In the following, we present a sufficient condition for \eqref{eq:2.14}. The following result also presents a constructive method to obtain optimal policies using asymptotic analysis.
\begin{theorem}\label{the:6}
Assume
 \begin{itemize}
\item[(a)] {for every N}, there exist\/ $\underline{\gamma}^{*}_{N} \in \bf{\Gamma}_{N}$\@ for ($\mathcal{P}_{N}$) (see \eqref{eq:2.6.6}),
\item[(b)] let $\omega \in B$ for some $B \in \mathcal{F}$ event of $\mathbb{P}$ measure one, for every fixed $v^{i}(\omega)$, \/ ${\gamma}^{i*}_{N}(v^{i})$\@ converges to\/ ${\gamma}^{i*}_{\infty}(v^{i})$\@ uniformly in\/ $i=1,2,\dots,N$\@, i.e., 
\begin{equation*}
\lim\limits_{N\rightarrow \infty}\sup\limits_{1\leq i \leq N}|\gamma^{i*}_{N}(v^{i})-\gamma^{i*}_{\infty}(v^{i})|=0~\mathbb{P}-a.s.,
\end{equation*}
\item[(c)] 
there exists a\/ $\mathbb{P}$\@-integrable function\/ $g(\omega_{0},\underline{v})$\@ such that, for every $N$,
\begin{equation*}
\frac{1}{N}\sum_{i=1}^{N}c\bigg(\omega_{0},\gamma^{i*}_{\infty}(v^{i}),\frac{1}{N}\sum_{p=1}^{N}\gamma^{p*}_{\infty}(v^{p})\bigg) \leq  {g}(\omega_{0},\underline{v}),
\end{equation*}
\end{itemize}
 where $\underline{v}=(v^{1},v^{2},\dots)$, then\/ $\underline\gamma^{*}$\@, a team optimal policy for ($\mathcal{P}_{\infty}$), is a pointwise limit of\/ $\underline\gamma^{*}_{N}$\@, an optimal policy for ($\mathcal{P}_{N}$), i.e.,\/ ${\gamma}^{i*}(v^{i})=\displaystyle\lim\limits_{N\rightarrow \infty}{\gamma}^{i*}_{N}(v^{i})={\gamma}^{i*}_{\infty}(v^{i})$\@ $\mathbb{P}$-almost surely.
\end{theorem}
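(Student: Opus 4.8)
The plan is to verify the hypothesis \eqref{eq:2.14} of Theorem \ref{the:5} for the candidate limit policy $\underline\gamma^{*}=\underline\gamma^{*}_{\infty}$ obtained in (b), and then invoke Theorem \ref{the:5} to conclude global optimality for $(\mathcal{P}_{\infty})$. The starting observation is that, by (a), $\underline\gamma^{*}_{N}$ is optimal for $(\mathcal{P}_{N})$, so $J_{N}(\underline\gamma^{*}_{N}) = \inf_{\underline\gamma_{N}\in\mathbf{\Gamma}_{N}} J_{N}(\underline\gamma_{N}) \le J_{N}(\underline\gamma^{*}_{\infty}|_{N})$ for every $N$, since the restriction $\underline\gamma^{*}_{\infty}|_{N}$ is an admissible policy for the $N$-DM team. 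Taking $\limsup_{N\to\infty}$ on both sides reduces the problem to showing
\begin{equation*}
\limsup_{N\to\infty} \frac{1}{N}\sum_{i=1}^{N}\mathbb{E}^{\underline\gamma^{*}_{\infty}}\!\left[c\!\left(\omega_{0},\gamma^{i*}_{N}(v^{i})\Big|_{\text{restricted}},\cdot\right)\right]
\end{equation*}
— more precisely $\limsup_N J_N(\underline\gamma^{*}_{\infty}|_{N}) \ge J(\underline\gamma^{*}_{\infty})$ — but in fact the cleaner route is to show directly that $\limsup_N J_N(\underline\gamma^{*}_N) \ge J(\underline\gamma^{*}_{\infty})$, and the most natural way to do so is to pass to a subsequence along which the $\limsup$ is attained and compare $J_N(\underline\gamma^{*}_N)$ with the (truncated) cost of the limit policy.

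The core of the argument is an interchange-of-limit-and-expectation step. Fix a subsequence $N_k$ realizing $\limsup_N J_N(\underline\gamma^{*}_N)$. For $\mathbb{P}$-a.e.\ $\omega$, condition (b) gives $\sup_{1\le i\le N}|\gamma^{i*}_N(v^i(\omega)) - \gamma^{i*}_\infty(v^i(\omega))| \to 0$; since $c$ is continuous (indeed differentiable with continuous partials, by (A1)) in its second and third arguments, and since the uniform-in-$i$ convergence controls both $u^i = \gamma^{i*}_N(v^i)$ and the empirical mean $\mu_N = \frac1N\sum_{p=1}^N \gamma^{p*}_N(v^p)$ (the latter difference is bounded by $\sup_i|\gamma^{i*}_N(v^i)-\gamma^{i*}_\infty(v^i)|$), one gets the pathwise convergence
\begin{equation*}
\frac1N\sum_{i=1}^N c\!\left(\omega_0,\gamma^{i*}_N(v^i),\tfrac1N\!\sum_{p=1}^N\gamma^{p*}_N(v^p)\right) - \frac1N\sum_{i=1}^N c\!\left(\omega_0,\gamma^{i*}_\infty(v^i),\tfrac1N\!\sum_{p=1}^N\gamma^{p*}_\infty(v^p)\right)\to 0
\end{equation*}
along $N_k$, $\mathbb{P}$-a.s. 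Here the fact that $\underline\gamma^{*}_N$ converges to $\underline\gamma^{*}_\infty$ \emph{uniformly in $i$} is exactly what is needed so that a single modulus-of-continuity estimate dominates the Cesàro average; pointwise-in-$i$ convergence would not suffice. Condition (c) then supplies a $\mathbb{P}$-integrable dominating function $g(\omega_0,\underline v)$ for the averaged cost of the limit policy, and together with the nonnegativity of $c$ this lets me apply a generalized dominated convergence / Fatou argument to move the a.s.\ limit through the expectation, yielding $\lim_k J_{N_k}(\underline\gamma^{*}_{N_k}) = \lim_k \frac{1}{N_k}\sum_{i=1}^{N_k}\mathbb{E}^{\underline\gamma^{*}_\infty}[c(\cdot)] \ge J(\underline\gamma^{*}_\infty)$ (using $\limsup$ over the sum is at least $J$, or equality along a further subsequence), which is precisely \eqref{eq:2.14} with $\underline\gamma^{*}=\underline\gamma^{*}_\infty$. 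Also $J(\underline\gamma^{*}_\infty)<\infty$ follows from (c). Theorem \ref{the:5} then delivers global optimality, and since optimality forces $\underline\gamma^{*}$ to agree with $\underline\gamma^{*}_\infty$ (the pointwise limit of $\underline\gamma^{*}_N$), the stated conclusion ${\gamma}^{i*}(v^i)=\lim_N {\gamma}^{i*}_N(v^i)={\gamma}^{i*}_\infty(v^i)$ $\mathbb{P}$-a.s.\ follows.

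The main obstacle I anticipate is the integrability/domination bookkeeping needed to justify the interchange of limit and expectation: condition (c) dominates the cost of the \emph{limit} policy, but along the subsequence one also needs the cost of $\underline\gamma^{*}_N$ to be controlled, which is handled by writing $\frac1N\sum_i c(\omega_0,\gamma^{i*}_N,\mu_N) \le \frac1N\sum_i c(\omega_0,\gamma^{i*}_\infty,\mu^\infty_N) + (\text{small error that is a.s.\ bounded for large }N)$ and invoking nonnegativity; making the "small error" genuinely dominated (not merely a.s.\ small) for the DCT step requires a little care, and is where the uniform-in-$i$ hypothesis (b) together with continuity of $c$ does the real work. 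A secondary subtlety is the handling of $\limsup$ versus $\lim$ in \eqref{eq:2.5.5}: one should pass to a subsequence on which both the averaged finite-team cost and the averaged limit-policy cost converge, and verify the inequality there, which suffices because Theorem \ref{the:5} only requires the $\limsup$ inequality.
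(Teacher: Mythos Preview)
Your proposal is correct and follows essentially the same route as the paper: invoke Theorem~\ref{the:5} by establishing $\limsup_{N}J_{N}(\underline\gamma^{*}_{N})\ge J(\underline\gamma^{*}_{\infty})$, use the uniform-in-$i$ convergence (b) to control both the individual actions and the empirical mean so that the Ces\`aro-averaged costs under $\underline\gamma^{*}_{N}$ and $\underline\gamma^{*}_{\infty}$ coincide $\mathbb{P}$-a.s.\ in the limit, and then pass the limit through the expectation via (c).

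One point worth noting: the ``main obstacle'' you flag---that (c) dominates the cost of the \emph{limit} policy but not of $\underline\gamma^{*}_{N}$---is handled in the paper more cleanly than your proposed fix (bounding the $\underline\gamma^{*}_{N}$-cost by the $\underline\gamma^{*}_{\infty}$-cost plus a small error and worrying about whether the error is dominated). The paper simply applies Fatou's lemma directly to $\liminf_{N}J_{N}(\underline\gamma^{*}_{N})$, which requires only nonnegativity of $c$ and no domination whatsoever; this yields $\liminf_{N}J_{N}(\underline\gamma^{*}_{N})\ge \mathbb{E}\bigl[\lim_{N}\frac{1}{N}\sum_{i}c(\omega_{0},\gamma^{i*}_{N},\mu^{*}_{N})\bigr]$. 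Then the pathwise argument (your uniform-convergence-plus-continuity step) identifies the integrand with $\lim_{N}\frac{1}{N}\sum_{i}c(\omega_{0},\gamma^{i*}_{\infty},\mu^{*}_{\infty})$ a.s., and only at this stage---for the $\underline\gamma^{*}_{\infty}$-cost, where (c) applies---is dominated convergence invoked to pull the limit back outside. This decomposition (Fatou on $\underline\gamma^{*}_{N}$, pathwise identification, DCT on $\underline\gamma^{*}_{\infty}$) dissolves your obstacle without any need to dominate the error term, and also makes the subsequence extraction unnecessary.
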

\begin{proof}
According to Theorem \ref{the:5}, we only need to show that 
\begin{flalign*}
\limsup\limits_{N\rightarrow \infty} J_{N}(\underline{\gamma}^{*}_{N})&\geq \liminf\limits_{N\rightarrow \infty} J_{N}(\underline{\gamma}^{*}_{N})\\&\geq E\bigg(\lim\limits_{N\rightarrow \infty} \frac{1}{N}\sum_{i=1}^{N}c(\omega_{0},{\gamma}^{i*}_{N}(v^{i}),\mu_{N}^{*})\bigg)\\&=\lim\limits_{N\rightarrow \infty} J_{N}(\underline{\gamma}^{*}_{\infty}),
\end{flalign*}
where $\mu^{*}_{N}=\frac{1}{N}\sum_{p=1}^{N}\gamma^{p*}_{N}(v^{p})$ and the second inequality follows from Fatou's lemma (since the cost function is non-negative). In the following, we justify the equality above. On a set of $\mathbb{P}$ measure one, $\omega \in B$ where $B \in \mathcal{F}$, for every fixed $v^{i}(\omega)$ in this set, define $\underline{v}(\omega)=(v^{1}(\omega),v^{2}(\omega),...)$ and $\underline{v}_{N}(\omega)=(v^{1}(\omega),\dots,v^{N}(\omega))$. We follow three steps to prove the theorem.

\begin{itemize}[wide = 0pt]
\item [\textbf{(Step 1):}]  We show that on a set of $\mathbb{P}$ measure one, $\omega \in B$ where $B \in \mathcal{F}$, for every fixed $v^{i}(\omega)$ in this set $\lim\limits_{N\rightarrow \infty}\frac{1}{N}\sum_{i=1}^{N}\left(\gamma^{i*}_{N}(v^{i})-\gamma^{i*}_{\infty}(v^{i})\right)=0$. For a fixed $\underline{v}$, following from (b) for a given $\delta_{\omega,\underline{v}_{N}}:=\sup_{1 \leq i \leq N}|\gamma^{i*}_{N}(v^{i})-\gamma^{i*}_{\infty}(v^{i})|>0$ there exists $\hat{N}(\delta_{\omega,\underline{v}_{N}}) \in \mathbb{N}$ such that for $N>\hat{N}(\delta_{\omega,\underline{v}_{N}})$, $|\gamma^{i*}_{N}(v^{i})-\gamma^{i*}_{\infty}(v^{i})|\leq\delta_{\omega,\underline{v}_{N}}$ for every $i=1,\dots,N$, where $\lim\limits_{N\rightarrow \infty}\delta_{\omega,\underline{v}_{N}}=0$ $\mathbb{P}$-almost surely. We have $\mathbb{P}$-almost surely,
 \begin{flalign*}
 &\bigg|\frac{1}{N}\sum_{i=1}^{N}\left(\gamma^{i*}_{N}(v^{i})-\gamma^{i*}_{\infty}(v^{i})\right)\bigg|<\frac{1}{N}\sum_{i=1}^{N} \delta_{\omega,\underline{v}_{N}}=\delta_{\omega,\underline{v}_{N}},
 \end{flalign*}
and since $\lim\limits_{N\rightarrow \infty}\sup_{1 \leq i \leq N}|\gamma^{i*}_{N}(v^{i})-\gamma^{i*}_{\infty}(v^{i})|= 0$, we have $\lim\limits_{N\rightarrow \infty}\delta_{\omega,\underline{v}_{N}}=0$. Hence, we can show that $\lim\limits_{N\rightarrow \infty}\frac{1}{N}\sum_{i=1}^{N}\gamma^{i*}_{N}(v^{i})=\lim\limits_{N\rightarrow \infty}\frac{1}{N}\sum_{i=1}^{N}\gamma^{i*}_{\infty}(v^{i})$. Following from continuity, $c(\omega_{0},\gamma^{i*}_{N}(v^{i}),\mu^{*}_{N})$ converges to $c(\omega_{0},\gamma^{i*}_{\infty}(v^{i}),\lim\limits_{N\rightarrow \infty}\mu_{\infty}^{*})$ $\mathbb{P}$-a.s. for every $i=1.\dots,N$.

\item [\textbf{(Step 2):}] 
We show that $c(\omega_{0},\gamma^{i*}_{N}(v^{i}),\mu^{*}_{N})$ converges to $c(\omega_{0},\gamma^{i*}_{\infty}(v^{i}),\lim\limits_{N\rightarrow \infty}\mu^{*}_{\infty})$ uniformly in $i=1,\dots,N$ $\mathbb{P}$-almost surely, where $\mu^{*}_{\infty}=\frac{1}{N}\sum_{p=1}^{N}\gamma^{p*}_{\infty}(v^{p})$. By continuity of the cost function, we have for a given $\epsilon_{\omega,{\underline{v}_{N}}}>0$, there exists $\delta_{\omega,{\underline{v}_{N}}}>0$ such that $|\gamma^{i*}_{N}(v^{i})-\gamma^{i*}_{\infty}(v^{i})|<\delta_{\omega,{\underline{v}_{N}}}$, and  $|\frac{1}{N}\sum_{i=1}^{N}\left(\gamma^{i*}_{N}(v^{i})-\gamma^{i*}_{\infty}(v^{i})\right)|<\delta_{\omega,\underline{v}_{N}}$ implies $|c(\omega_{0},\gamma^{i*}_{N}(v^{i}),\mu^{*}_{N})-c(\omega_{0},\gamma^{i*}_{\infty}(v^{i}),\mu^{*}_{\infty})|<\epsilon_{\omega,{\underline{v}_{N}}}$ $\mathbb{P}$-almost surely for every $i=1,\dots,N$. Following from (Step 1), we have for $N>\hat{N}(\delta_{\omega,\underline{v}_{N}}(\epsilon_{\omega,{\underline{v}_{N}}}))$, $|\gamma^{i*}_{N}(v^{i})-\gamma^{i*}_{\infty}(v^{i})|<\delta_{\omega,\underline{v}_{N}}$, and  $|\frac{1}{N}\sum_{i=1}^{N}\left(\gamma^{i*}_{N}(v^{i})-\gamma^{i*}_{\infty}(v^{i})\right)|<\delta_{\omega,\underline{v}_{N}}$. Hence, $\mathbb{P}$-a.s.
\begin{flalign*}
|c(\omega_{0},\gamma^{i*}_{N}&(v^{i}),\mu^{*}_{N})-c(\omega_{0},\gamma^{i*}_{\infty}(v^{i}),\mu^{*}_{\infty})| <\epsilon_{\omega,\underline{v}_{N}},
\end{flalign*}
where $\lim\limits_{N\rightarrow \infty}\epsilon_{\omega,\underline{v}_{N}}=0$.

\item [\textbf{(Step 3):}] 
In this step, we show that $\mathbb{P}$-a.s., 
\begin{equation*}
\lim\limits_{N\rightarrow \infty}\frac{1}{N}\sum_{i=1}^{N}\left(c(\omega_{0},\gamma^{i*}_{N}(v^{i}),\mu^{*}_{N})-c(\omega_{0},\gamma^{i*}_{\infty}(v^{i}),\mu^{*}_{\infty})\right)=0.
\end{equation*}
According to (Step 2), for $N>\hat{N}(\delta_{\omega,\underline{v}_{N}}\left(\epsilon_{\omega,{\underline{v}_{N}}})\right)$, we have $\mathbb{P}$-a.s.
\begin{flalign*}
\bigg|\frac{1}{N}&\sum_{i=1}^{N}c(\omega_{0},\gamma^{i*}_{N}(v^{i}), \mu^{*}_{N})-c(\omega_{0},\gamma^{i*}_{\infty}(v^{i}), \mu^{*}_{\infty})\bigg|
<\epsilon_{\omega,{\underline{v}_{N}}}.
\end{flalign*}
Following from (c), we can interchange the limit and the integral using the dominated convergence theorem, and the proof is completed.
\end{itemize}
\end{proof}
\begin{remark}\label{rem:3}
One can relax conditions in Theorem \ref{the:6} as follows:
\begin{itemize}
\item [(i)]
relax (a) by considering a sequence of $\epsilon_{N}$-optimal policy, where $\epsilon_{N}$ are non-negative and converges to zero as $N \to \infty$,
\item [(ii)]
relax (c) with a uniform integrability condition which is satisfied if the following expression is finite (see \cite[Theorem 3.5]{Billingsley}),
\begin{flalign*} \label{eq:2.18}
\sup\limits_{N\geq 1}\mathbb{E}\bigg[\bigg\lvert\frac{1}{N}\sum_{i=1}^{N}c\bigg(\omega_{0},\gamma^{i*}_{\infty}(v^{i}),\frac{1}{N}\sum_{i=1}^{{N}}\gamma^{i*}_{\infty}(v^{i})\bigg)\bigg\lvert^{1+\epsilon}\bigg],
\end{flalign*}
for some $\epsilon>0$.
This new condition can be checked in some applications (see Section \ref{sec:5}). The result follows from \cite[Theorem 3.5]{Billingsley},
\item [(iii)]
relax the convergence $\mathbb{P}$-almost surely in (b) by considering convergence in probability, i.e.,
\begin{equation*}
\lim\limits_{N\rightarrow \infty}\mathbb{P}\bigg(\sup\limits_{1\leq i \leq N}\bigg|\gamma^{i*}_{N}(v^{i})-\gamma^{i*}_{\infty}(v^{i})\bigg|\geq\epsilon\bigg)=0,
\end{equation*}
hence similar to the proof of Theorem \ref{the:6}, (Step 1), using continuous mapping theorem (see for example, \cite[page 20]{Billingsley}), we can show that $c(\omega_{0},\gamma^{i*}_{N}(v^{i}),\mu^{*}_{N})$ converges to $c(\omega_{0},\gamma^{i*}_{\infty}(v^{i}),\lim\limits_{N\rightarrow \infty}\mu_{\infty}^{*})$ in probability. Similarly, the result of (Step 2) holds in probability. Using \cite[Theorem 3.5]{Billingsley}, under the uniform integrablity of $X_{N}:=\frac{1}{N}\sum_{i=1}^{N}c\left(\omega_{0},\gamma^{i*}_{\infty}(v^{i}),\frac{1}{N}\sum_{i=1}^{{N}}\gamma^{i*}_{\infty}(v^{i})\right)$ and under the convergence in probability of $X_{N}$ to $X:=\lim\limits_{N\rightarrow \infty}\frac{1}{N}\sum_{i=1}^{N}c\left(\omega_{0},\gamma^{i*}_{\infty}(v^{i}),\frac{1}{N}\sum_{i=1}^{{N}}\gamma^{i*}_{\infty}(v^{i})\right)$, we can conclude that $\mathbb{E}(X_{N}) \to \mathbb{E}(X)$. This relaxation can be useful when the weak law of large numbers can be invoked to check (c), but the strong law of large numbers fails to apply.
\end{itemize}
\end{remark}

We apply the results of this section to two examples in Sections \ref{Ex:1} and \ref{Ex:2}.

In the following section, we show that under symmetry of optimal policies, sufficient conditions of optimality can be satisfied quite effortlessly.

\section{Globally Optimal Policies for Mean-Field Teams}\label{sec:4}
\subsection{Symmetric teams}
In the following, we present sufficient conditions for team optimality in symmetric and mean-field teams. The concept of symmetry has been studied in a variety of contexts; see e.g., \cite{Nash51}, \cite{dasgupta1986existence} and many others.
\begin{definition}(Exchangeable teams)\label{def:sym}\\
An $N$-DM team is \textit{exchangeable} if the value of the expected cost function (see \eqref{eq:1.1}) is invariant under every permutation of policies.
\end{definition}
We note that it is also called \textit{totally symmetric} in a game theoretic context (see for example \cite{dasgupta1986existence}). 
\begin{definition}(Symmetrically optimal teams)\label{Def:ost}\\
A team is \textit{symmetrically optimal}, if for every given policy, there exists an identically symmetric policy (i.e., each DM has the same policy) which performs at least as good as the given policy.
\end{definition}

In the following, we characterize the symmetry of the general setup for ($\mathcal{P}_{N}^{\prime}$) (see \eqref{eq:1.1}) defined in Section \ref{sec:2.1}. Clearly, the result will also hold for the ($\mathcal{P}_{N}$) (see \eqref{eq:2.6.6}) defined in Section \ref{sec:2.2}. First, we recall the definition of an {\it exchangeable} finite set of random variables.
\begin{definition}
Random variables $x^{1},x^{2},\dots,x^{N}$ are \it{exchangeable} if any permutation, $\sigma$, of the set of indexes $\{1,\dots,N\}$ fails to change the joint probability measures of random variables, i.e., $\mathbb{P}(dx^{\sigma(1)},dx^{\sigma(2)},\dots,dx^{\sigma(N)})=\mathbb{P}(dx^{1},dx^{2},\dots,dx^{N})$.
\end{definition}
\begin{lemma}\label{lem:4.4.4.4}
For a fixed $N$, consider an $N$-DM team defined as ($\mathcal{P}_{N}^{\prime}$) (see \eqref{eq:1.1}) and let the cost function be  a convex function of $\underline{u}_{N}$ $\mathbb{P}$-almost surely. Assume the cost function is exchangeable $\mathbb{P}$-almost surely with respect to the actions, i.e., for any permutation of indexes, $\sigma$, $\mathbb{P}$-almost surely $c(\omega_{0}, u^{1},\dots, u^{N})=c(\omega_{0}, u^{\sigma(1)},\dots, u^{\sigma(N)})$. If \@$\mathbb{U}$ is convex, and observations of DMs are exchangeable conditioned on $\omega_{0}$, then the team is symmetrically optimal.
\end{lemma}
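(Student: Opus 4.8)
The plan is to take an arbitrary admissible policy $\underline{\gamma}_N = (\gamma^1,\dots,\gamma^N)$ and produce, by symmetrization, an identically symmetric policy that performs at least as well, which is exactly what Definition \ref{Def:ost} requires. The natural candidate is the ``averaged'' policy: since $\mathbb{U}$ is convex, we may define a single policy $\bar{\gamma}$ to be used by every DM. The subtlety is that DM $i$ observes $v^i$, so we cannot literally average the functions $\gamma^j$ evaluated at different observations; instead I would exploit that the observations are exchangeable conditioned on $\omega_0$. Concretely, for each permutation $\sigma$ of $\{1,\dots,N\}$, consider the permuted policy $\underline{\gamma}_N^\sigma := (\gamma^{\sigma(1)},\dots,\gamma^{\sigma(N)})$. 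Because the cost is exchangeable in the actions $\mathbb{P}$-a.s.\ and the observation vector $(v^1,\dots,v^N)$ is exchangeable given $\omega_0$, each $\underline{\gamma}_N^\sigma$ achieves the \emph{same} value of the expected cost $J_N$ as $\underline{\gamma}_N$. Averaging over all $N!$ permutations and invoking Jensen's inequality (using convexity of $c$ in $\underline{u}_N$) then gives a policy that is at least as good, and I would check this averaged policy is the identically symmetric one $u^i = \bar{\gamma}(v^i)$ with $\bar{\gamma} := \frac{1}{N!}\sum_\sigma$ of the appropriately relabeled components.

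The key steps, in order, would be: (1) Fix $\underline{\gamma}_N \in \mathbf{\Gamma}_N$ with finite cost (if the cost is infinite there is nothing to prove). (2) For a permutation $\sigma$, define the relabeled policy where DM $i$ applies $\gamma^{\sigma(i)}$; show $J_N$ is unchanged. This is where exchangeability of the observations conditioned on $\omega_0$ enters: writing out $E[c(\omega_0,\gamma^{\sigma(1)}(v^1),\dots,\gamma^{\sigma(N)}(v^N))]$, condition on $\omega_0$, relabel the observation variables using their conditional exchangeability, and then use the $\mathbb{P}$-a.s.\ exchangeability of $c$ in its action arguments to absorb $\sigma$. (3) Define $\bar{\gamma}(\cdot) := \frac{1}{N!}\sum_{\sigma} \gamma^{\sigma(i)}(\cdot)$ — note that as $\sigma$ ranges over all permutations with $i$ fixed, $\sigma(i)$ hits each index equally often, so $\bar{\gamma} = \frac{1}{N}\sum_{j=1}^N \gamma^j$ is independent of $i$; convexity of $\mathbb{U}$ guarantees $\bar{\gamma}$ is $\mathbb{U}$-valued and measurability is preserved under finite sums. (4) Apply Jensen's inequality to the convex function $\underline{u}_N \mapsto c(\omega_0,\underline{u}_N)$: the identically symmetric profile $(\bar{\gamma}(v^1),\dots,\bar{\gamma}(v^N))$ is the average over $\sigma$ of the profiles $(\gamma^{\sigma(1)}(v^1),\dots,\gamma^{\sigma(N)}(v^N))$, hence $c(\omega_0,\bar{\gamma}(v^1),\dots,\bar{\gamma}(v^N)) \le \frac{1}{N!}\sum_\sigma c(\omega_0,\gamma^{\sigma(1)}(v^1),\dots,\gamma^{\sigma(N)}(v^N))$. (5) Take expectations and use Step (2): the right side equals $J_N(\underline{\gamma}_N)$, so $J_N(\bar{\gamma},\dots,\bar{\gamma}) \le J_N(\underline{\gamma}_N)$, and since $\underline{\gamma}_N$ was arbitrary the team is symmetrically optimal.

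The main obstacle I expect is Step (2) — making the invariance of $J_N$ under relabeling fully rigorous. One has to be careful that ``observations exchangeable conditioned on $\omega_0$'' is used correctly: the statement is about the joint conditional law of $(v^1,\dots,v^N)$ given $\omega_0$, and one must argue that substituting this permuted joint law into the expectation $E[\,c(\omega_0,\gamma^{\sigma(1)}(v^1),\dots)\mid\omega_0\,]$ is valid, then combine with the almost-sure exchangeability of $c$ in the action slots (which requires care about the null set possibly depending on the values of the actions, though for a fixed finite set of permutations a single null set suffices). A secondary, more minor point is ensuring $\bar{\gamma}$ is an \emph{admissible} policy — measurability is immediate from finite summation, and $\mathbb{U}$-valuedness is exactly where convexity of $\mathbb{U}$ is invoked. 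Everything else (Jensen, taking expectations, the counting argument that $\bar\gamma$ is $i$-independent) is routine.
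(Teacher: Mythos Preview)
Your proposal is correct and follows essentially the same approach as the paper: show that $J_N$ is invariant under permutation of policies using exchangeability of the cost in actions together with conditional exchangeability of the observations, then average over all permutations and apply Jensen's inequality via convexity of $c$ to conclude that the identically symmetric policy $\bar{\gamma}=\frac{1}{N}\sum_{j}\gamma^{j}$ does at least as well. The paper's proof is slightly terser on the points you flag (measurability, the null-set issue for a.s.\ exchangeability), but the logical structure is identical.
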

\begin{proof}
Any permutation of policies does not deviate the value of $J_{N}(\underline{\gamma}_{N})$ since
\begin{flalign}
&J_{N}(\underline{\gamma}^{\sigma}_{N})\nonumber\\&=\int c(\omega_{0}, u^{1},\dots, u^{N})\mathbb{P}_{N}(dv^{1},\dots,dv^{N}|\omega_{0})\nonumber\\
&~~~~~\times 1_{\{(\gamma^{\sigma(1)}(v^{1}),\dots,\gamma^{\sigma(N)}(v^{N}))\}}(du^{1},\dots,du^{N})\mathbb{P}(d\omega_{0})\nonumber\\
&=\int c(\omega_{0}, u^{\sigma(1)},\dots, u^{\sigma(N)})\nonumber\\&~~~~~\times1_{\{(\gamma^{\sigma(1)}(v^{\sigma(1)}),\dots,\gamma^{\sigma(N)}(v^{\sigma(N)}))\}}(du^{\sigma(1)},\dots, du^{\sigma(N)})\nonumber\\
&~~~~~\times\mathbb{P}_{N}(dv^{\sigma(1)},\dots,dv^{\sigma(N)}|\omega_{0})\mathbb{P}(d\omega_{0})\label{eq:4.4.4.4}\\
&=\int c(\omega_{0}, u^{1},\dots, u^{N})1_{\{(\gamma^{1}(v^{1}),\dots,\gamma^{N}(v^{N}))\}}(du^{1},\dots,du^{N})\nonumber\\&~~~~~\times\mathbb{P}_{N}(dv^{1},\dots,dv^{N}|\omega_{0})\mathbb{P}(d\omega_{0})\nonumber\\
&=J_{N}(\underline{\gamma}_{N})\nonumber,
\end{flalign}
 where \eqref{eq:4.4.4.4} follows from the assumption that the cost function is exchangeable with respect to the actions, and the hypothesis that observations of DMs are $\mathbb{P}$-almost surely exchangeable conditioned on the random variable $\omega_{0}$. Let $\underline{\gamma}^{*}_{N}=(\gamma^{1*},\gamma^{2*},\dots, \gamma^{N*})$ be a given team policy for ($\mathcal{P}_{N}^{\prime}$) (see \eqref{eq:1.1}). Consider $\underline{\tilde{\gamma}}_{N}$ as a convex combination of all possible permutations of policies by averaging them, $\sigma \in \Sigma$, where $\Sigma$ is the set of all possible permutation. Since $\mathbb{U}$ is convex, $\underline{\tilde{\gamma}}_{N}$ is a control policy. Following from convexity of the cost function $\mathbb{P}$-almost surely, we have for $\alpha_{\sigma}=\frac{1}{|\Sigma|}$ (where $|\Sigma|$ denotes the cardinality of $\Sigma$),
 \begin{flalign*}
J_{N}(\underline{\tilde{\gamma}}_{N})&:=J_{N}(\sum_{\sigma\in \Sigma}\alpha_{\sigma}\underline{\gamma}^{*,\sigma}_{N}) \leq \sum_{\sigma\in \Sigma}\alpha_{\sigma}J_{N}(\underline{\gamma}^{*,\sigma}_{N})\\&=\sum_{\sigma\in \Sigma}\alpha_{\sigma}J_{N}(\underline{\gamma}^{*}_{N})\ =J_{N}(\underline{\gamma}^{*}_{N}),
 \end{flalign*}
 where the inequality follows from convexity of the cost function $\mathbb{P}$-almost surely for every fixed realization of observations since we have 
 \begin{align*}
& \mathbb{E}\bigg[c\bigg(\omega_{0},\sum_{\sigma\in \Sigma}\alpha_{\sigma}(\underline{\gamma}^{*,\sigma}_{N})^{1}(v^{1}),\dots,\sum_{\sigma\in \Sigma}\alpha_{\sigma}(\underline{\gamma}^{*,\sigma}_{N})^{N}(v^{N})\bigg)\bigg]\\
 &\leq \mathbb{E}\bigg[\sum_{\sigma\in \Sigma}\alpha_{\sigma}c\bigg(\omega_{0},(\underline{\gamma}^{*,\sigma}_{N})^{1}(v^{1}),\dots,(\underline{\gamma}^{*,\sigma}_{N})^{N}(v^{N})\bigg)\bigg]\\
  &=\sum_{\sigma\in \Sigma}\alpha_{\sigma}\mathbb{E}\bigg[c\bigg(\omega_{0},(\underline{\gamma}^{*,\sigma}_{N})^{1}(v^{1}),\dots,(\underline{\gamma}^{*,\sigma}_{N})^{N}(v^{N})\bigg)\bigg],
 \end{align*}
 where $(\underline{\gamma}^{*,\sigma}_{N})^{j}$ denotes the $j$-the component of $\underline{\gamma}^{*,\sigma}_{N}$, and the inequality above follows from Jensen's inequality since the cost function is convex $\mathbb{P}$-almost surely. Hence, the team is symmetrically optimal.
 \end{proof}
 In the following, we present another characterization of symmetrically optimal teams; this looks to be a standard result; however, a proof is included for completeness since we could not find an explicit reference.
\begin{lemma}\label{lem:ex}
For a fixed $N$, consider an $N$-DM team defined as ($\mathcal{P}_{N}^{\prime}$) (see \eqref{eq:1.1}) and let the cost function be  a convex function of $\underline{u}_{N}$ $\mathbb{P}$-almost surely. Assume the set of action space for each DM is convex. If the expected cost function (see \eqref{eq:1.1}) is exchangeable with respect to the policies, then the team is symmetrically optimal.
\end{lemma}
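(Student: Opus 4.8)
The plan is to follow the pattern of Lemma \ref{lem:4.4.4.4}, but the argument is shorter here: since exchangeability of the \emph{expected} cost with respect to the policies is now assumed directly, there is no need to invoke conditional exchangeability of the observations. First I would fix an arbitrary team policy $\underline{\gamma}^{*}_{N}=(\gamma^{1*},\dots,\gamma^{N*})\in {\bf \Gamma}_{N}$ (one may assume $J_{N}(\underline{\gamma}^{*}_{N})<\infty$, since otherwise there is nothing to prove), and for each permutation $\sigma$ of $\{1,\dots,N\}$ let $\underline{\gamma}^{*,\sigma}_{N}$ denote the policy whose $i$-th component is $\gamma^{\sigma(i)*}$. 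By the hypothesis that $J_{N}$ is exchangeable with respect to the policies, $J_{N}(\underline{\gamma}^{*,\sigma}_{N})=J_{N}(\underline{\gamma}^{*}_{N})$ for every $\sigma\in\Sigma$, where $\Sigma$ is the symmetric group on $\{1,\dots,N\}$.

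Next I would form the averaged policy $\underline{\tilde{\gamma}}_{N}:=\sum_{\sigma\in\Sigma}\alpha_{\sigma}\,\underline{\gamma}^{*,\sigma}_{N}$ with $\alpha_{\sigma}=1/|\Sigma|$. Since each DM's action space is convex, $\underline{\tilde{\gamma}}_{N}$ is again an admissible policy. The crucial point, which I would spell out, is that $\underline{\tilde{\gamma}}_{N}$ is in fact an identical (symmetric) policy: its $j$-th component equals $\frac{1}{|\Sigma|}\sum_{\sigma\in\Sigma}\gamma^{\sigma(j)*}$, and as $\sigma$ ranges over $\Sigma$ the index $\sigma(j)$ takes each value in $\{1,\dots,N\}$ exactly $(N-1)!$ times, so this component equals $\frac{1}{N}\sum_{k=1}^{N}\gamma^{k*}$, which does not depend on $j$. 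Applying convexity of $\underline{u}_{N}\mapsto c(\omega_{0},\underline{u}_{N})$ ($\mathbb{P}$-a.s.) pointwise and then taking expectations via Jensen's inequality gives $J_{N}(\underline{\tilde{\gamma}}_{N})\le\sum_{\sigma\in\Sigma}\alpha_{\sigma}J_{N}(\underline{\gamma}^{*,\sigma}_{N})=\sum_{\sigma\in\Sigma}\alpha_{\sigma}J_{N}(\underline{\gamma}^{*}_{N})=J_{N}(\underline{\gamma}^{*}_{N})$. Thus the symmetric policy $\underline{\tilde{\gamma}}_{N}$ performs at least as well as $\underline{\gamma}^{*}_{N}$; since $\underline{\gamma}^{*}_{N}$ was arbitrary, the team is symmetrically optimal in the sense of Definition \ref{Def:ost}.

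The only real obstacle is the combinatorial collapse in the previous paragraph — that averaging a policy over all of its permutations yields an identical policy — which is where the hypotheses (convex action spaces and symmetry of $J_{N}$) are genuinely used; everything else, in particular checking that $c$ composed with the policies is measurable with well-defined integrals so that Jensen's inequality can be pushed through the expectation, is routine measure-theoretic bookkeeping essentially identical to that already carried out in the proof of Lemma \ref{lem:4.4.4.4}.
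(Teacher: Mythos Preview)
Your proposal is correct and follows essentially the same approach as the paper: fix an arbitrary policy, average it over all permutations (using convexity of the action spaces to ensure admissibility), apply Jensen's inequality together with the assumed exchangeability of $J_{N}$ to conclude that the averaged policy performs at least as well, and observe that this averaged policy is identically symmetric. The paper's proof is terser and simply asserts that $\underline{\tilde{\gamma}}_{N}$ is identically symmetric, whereas you spell out the combinatorial reason (each $\gamma^{k*}$ appears $(N-1)!$ times in the $j$-th component sum), but the underlying argument is the same.
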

\begin{proof}
Let $\underline{\gamma}^{*}_{N}=(\gamma^{1*},\gamma^{2*},\dots, \gamma^{N*})$ be a given team policy for ($\mathcal{P}_{N}^{\prime}$) (see \eqref{eq:1.1}). According to the definition of exchangeable teams, any permutation of policies, say $\underline{\hat{\gamma}}^{*}_{N}=(\gamma^{i_{1}*},\gamma^{i_{2}*},\dots,\gamma^{i_{N}*})$, fails to change the value of the expected cost function, and hence achieve the same expected cost as the one induced by $\underline{\gamma}^{*}_{N}$. Consider $\underline{\tilde{\gamma}}_{N}$ as a uniform randomization among all possible permutations of optimal policies, since $\mathbb{U}$ is convex then $\underline{\tilde{\gamma}}_{N}$ is a control policy. By convexity of the cost function, through Jensen's inequality, and the fact that any permutation of optimal policies preserves the value of the cost function, we have 
$J_{N}(\underline{\tilde{\gamma}}_{N})\leq J_{N}(\underline{\gamma}^{*}_{N})$. Since $\underline{\tilde{\gamma}}_{N}$ is also identically symmetric, the proof is completed.
\end{proof}
Now, we characterize symmetrically optimal teams for ($\mathcal{P}_{N}$) (see \eqref{eq:2.6.6}). 
 \begin{theorem}\label{rem:3.3}
 Consider an $N$-DM team defined as ($\mathcal{P}_{N}$) (see \eqref{eq:2.6.6}) in Section \ref{sec:2.2}. Let action spaces be convex and the cost function be convex in the second and third arguments $\mathbb{P}$-almost surely. If observations are exchangeable conditioned on $\omega_{0}$, then the team is symmetrically optimal.
 \end{theorem}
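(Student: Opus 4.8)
The plan is to recognize that $(\mathcal{P}_{N})$ in \eqref{eq:2.6.6} is a particular instance of the general team $(\mathcal{P}_{N}^{\prime})$ in \eqref{eq:1.1} whose (reduced) cost function automatically inherits convexity and permutation-invariance from the structure of the objective, and then simply to invoke Lemma \ref{lem:4.4.4.4}. Concretely, define the Borel cost
\[
\tilde c(\omega_{0},\underline u_{N}) \;:=\; \frac{1}{N}\sum_{i=1}^{N} c\!\left(\omega_{0},u^{i},\tfrac{1}{N}\sum_{p=1}^{N}u^{p}\right),
\]
so that $J_{N}(\underline\gamma_{N})=\mathbb{E}[\tilde c(\omega_{0},\underline\gamma_{N}(\underline v_{N}))]$ is exactly of the form \eqref{eq:1.1} with cost $\tilde c$. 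It then suffices to check that $\tilde c$ satisfies the hypotheses of Lemma \ref{lem:4.4.4.4}.

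\textbf{Step 1 (convexity of the reduced cost in the full action vector).} Fix $\omega_{0}$ and fix an $\omega$ in the probability-one event on which $c(\omega_{0},\cdot,\cdot)$ is convex in its second and third arguments (interpreted jointly, as in (A1)). For each $i$, the map $L_{i}\colon \underline u_{N}\mapsto \big(u^{i},\tfrac1N\sum_{p=1}^{N}u^{p}\big)$ is affine, hence $\underline u_{N}\mapsto c(\omega_{0},u^{i},\tfrac1N\sum_{p}u^{p}) = c(\omega_{0},\cdot,\cdot)\circ L_{i}$ is convex on $\prod_{k}\mathbb{U}^{k}$ as a convex function precomposed with an affine map. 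A nonnegatively weighted (here, $\tfrac1N$-weighted) sum of convex functions is convex, so $\tilde c(\omega_{0},\cdot)$ is convex in $\underline u_{N}$, $\mathbb{P}$-almost surely. \textbf{Step 2 (exchangeability of the reduced cost in the actions).} For any permutation $\sigma$ of $\{1,\dots,N\}$ we have $\tfrac1N\sum_{p=1}^{N}u^{\sigma(p)}=\tfrac1N\sum_{p=1}^{N}u^{p}$, and reindexing the outer sum gives $\sum_{i=1}^{N}c(\omega_{0},u^{\sigma(i)},\tfrac1N\sum_{p}u^{p})=\sum_{i=1}^{N}c(\omega_{0},u^{i},\tfrac1N\sum_{p}u^{p})$; hence $\tilde c(\omega_{0},u^{\sigma(1)},\dots,u^{\sigma(N)})=\tilde c(\omega_{0},u^{1},\dots,u^{N})$ for every $\omega_{0}$, i.e. $\tilde c$ is exchangeable with respect to the actions (in fact surely, not merely $\mathbb{P}$-a.s.).

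\textbf{Step 3 (conclusion via Lemma \ref{lem:4.4.4.4}).} By hypothesis the action space of each DM is convex, and the observations are exchangeable conditioned on $\omega_{0}$; combining this with Steps 1 and 2, the $N$-DM team with cost $\tilde c$ meets every assumption of Lemma \ref{lem:4.4.4.4}, which therefore yields that the team is symmetrically optimal: given any policy, averaging over its $|\Sigma|$ permutations produces an admissible (by convexity of $\mathbb{U}$) identically symmetric policy whose cost, by exchangeability and Jensen's inequality, is no larger. I expect the only point requiring genuine care to be Step 1 — establishing convexity of $\tilde c$ in the \emph{entire} action vector $\underline u_{N}$ rather than coordinatewise — since this is where joint convexity of $c$ in its second and third arguments is essential (the middle argument $u^{i}$ also enters the empirical mean $\mu_{N}$, so the composition-with-affine-map argument, not mere separate convexity, is what is needed). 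One could alternatively route the argument through Lemma \ref{lem:ex}, first noting that exchangeability of the observations conditioned on $\omega_{0}$ makes $J_{N}$ exchangeable with respect to the policies; but the path through Lemma \ref{lem:4.4.4.4} is the more direct.
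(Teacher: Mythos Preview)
Your proposal is correct and follows essentially the same approach as the paper: verify that the cost function in $(\mathcal{P}_{N})$ is exchangeable in actions and convex in $\underline u_{N}$, then invoke Lemma~\ref{lem:4.4.4.4} under the assumed convexity of action spaces and conditional exchangeability of observations. The paper's own proof is terser, simply asserting exchangeability in actions and convexity before citing Lemma~\ref{lem:4.4.4.4}; your Steps~1--2 supply the details (in particular the affine-composition argument for joint convexity) that the paper leaves implicit.
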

 \begin{proof}
 The cost function defined in ($\mathcal{P}_{N}$) (see \eqref{eq:2.6.6}) is exchangeable in actions, hence under convexity of the action spaces and the cost function and following from the hypothesis that observations are exchangeable condition on $\omega_{0}$, the proof is completed using Lemma \ref{lem:4.4.4.4}.
 \end{proof}
 Theorem \ref{rem:3.3} will be utilized in our analysis to follow. 
\subsection{Optimal solutions for mean-field teams as limits of optimal policies for finite symmetric teams}

In the following, we present results for symmetrically optimal static teams. First, we focus on the case that the observations of decision makers are identical and independent, then we deal with non-identical and dependent observations under additional assumptions.  
As we noted earlier, mean-field games studied in \cite{fischer2017connection} belong to this class in a game theoretic context; in \cite{fischer2017connection} concentration of measures arguments and independence of measurements have been utilized to justify the convergence of equilibria (person-by-person-optimality in the team setup). We also note that \cite{jovanovic1988anonymous} and \cite{mas1984theorem} have considered symmetry conditions for mean-field games. In the context of LQ mean-field teams, \cite{arabneydi2015team} has considered a setup where DMs share the mean-field in the system either completely or partially (through showing that a centralized performance can be attained under the restricted information structure). Also, for the LQ setup under the assumption that DMs apply an identical policy in addition to some technical assumptions, \cite{arabneydi2017certainty} showed that the expected cost achieved by a sub-optimal fully decentralized strategy is on $\epsilon(n)$ neighborhood of the optimal expected cost achieved when mean-field (empirical distribution of states) has been shared, where $n$ is the number of players. In \cite{huang2016linear}, a continuous-time setup with a major agent has been studied. 

\begin{remark}\label{rem:lacker}
We note that, in \cite[Section 2.4]{lacker2017limit}, \cite[Chapter 6 Volume I]{carmona2018probabilistic}, the connection between solutions of $N$-player differential control systems  and solutions of Mckean-Vlasov control problems has been investigated under either the assumption that the information structure is classical (i.e., the problem is centralized) since the controls, $u_{t}^{i}$, for each player are assumed to be progressively measurable with respect to the filtration generated by all initial states, $(X_{0}^{1},\dots, X_{0}^{N})$ and Wiener processes of all DMs ($\{(W_{s}^{1},\dots, W_{s}^{N}), s\leq t\}$), or by imposing structural assumptions on the controllers where controllers assumed to belong to the open-loop class (with their definition being, somewhat non-standard, that $u_{t}^{i}$ are progressively measurable with respect to the filtration generated by initial states and Wiener processes instead of the path of states $X_{s}^{i}$ for $s\leq t$) or to belong to Markovian controllers  (i.e., $u_{t}^{i}=\phi^{i}(t,X_{t}^{i})$ where $\phi^{i}$ are measurable functions) \cite{lacker2017limit},\cite[pages 72-76]{carmona2018probabilistic}. Also, in \cite[Theorem 2.11]{lacker2017limit}, it has been shown that a sequence of relaxed (measure-valued) open-loop $\epsilon_{N}$-optimal policies for $N$-player differential control systems (with only coupling on states) converges to a relaxed open-loop Mckean-Vlasov control optimal solution. Under additional assumptions, the existence of a strong solution and a Markovian optimal solution of McKean-Vlasov solution has been established \cite[Theorem 2.12 and Corollary 2.13]{lacker2017limit}. In the mean-field team setup, under the decentralized information structure, it is not clear apriori whether the limsup of the expected cost function and states of  dynamics for $N$-DM teams converge to the limit. In fact, the information structure of the team problem can break the symmetry and also can prevent establishing a limit theory (for example, by considering a partial sharing of observations between DMs). Here, by focusing on the decentralized setup and by considering mean-field coupling of controls, using a convexity argument and symmetry, we show that a sequence of optimal policies for ($\mathcal{P}_{N}$) converges pointwise to an optimal policy for ($\mathcal{P}_{\infty}$).
\end{remark}
Our next theorem, under the assumption that observations are independent and identically distributed, utilizes a measure concentration argument to establish a convergence result. 

\begin{theorem}\label{the:4.5}
Consider a team defined as  ($\mathcal{P}_{\infty}$) (see \eqref{eq:2.5.5}) with the convex cost function in the second and third arguments $\mathbb{P}$-almost surely. Let the action space be compact and convex for each decision maker, and $v^{i}$s be i.i.d. random variables. If there exists a sequence of optimal policies for ($\mathcal{P}_{N}$) (see \eqref{eq:2.6.6}), $\{\gamma^{*}_{N}\}_N$, which converges (for every decision maker due to the symmetry) pointwise to $\gamma^{*}_{\infty}$ as $N \to \infty$, then $\gamma^{*}_{\infty}$ (which is identically symmetric) is an optimal policy for ($\mathcal{P}_{\infty}$).
\end{theorem}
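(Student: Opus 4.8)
The plan is to apply Theorem \ref{the:5}, so the only thing to verify is the inequality \eqref{eq:2.14}, namely $\limsup_{N\to\infty} J_N(\underline\gamma^*_N) \geq J(\underline\gamma^*)$ where $\gamma^*$ is the identically-symmetric pointwise limit. By symmetry (Theorem \ref{rem:3.3}, which applies since the action spaces are convex and compact, the cost is convex in the last two arguments, and i.i.d.\ observations are certainly exchangeable conditioned on $\omega_0$), we may take $\gamma^*_N$ to be identically symmetric for every $N$; call the common policy $\gamma_N$ and its limit $\gamma_\infty=\gamma^*_\infty$. Then $J_N(\underline\gamma^*_N) = \mathbb{E}[c(\omega_0,\gamma_N(v^1),\frac1N\sum_{p=1}^N\gamma_N(v^p))]$, a genuine single expectation rather than an average of $N$ distinct terms. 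The key reduction is therefore that the empirical average $\mu_N = \frac1N\sum_{p=1}^N\gamma_N(v^p)$ should concentrate, as $N\to\infty$, on the deterministic value $\mathbb{E}[\gamma_\infty(v^1)]$.

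First I would establish this concentration. Write $\mu_N = \frac1N\sum_{p=1}^N\gamma_\infty(v^p) + \frac1N\sum_{p=1}^N(\gamma_N(v^p)-\gamma_\infty(v^p))$. Since the $v^p$ are i.i.d.\ and $\gamma_\infty$ takes values in a compact (hence bounded) set, the strong law of large numbers gives $\frac1N\sum_{p=1}^N\gamma_\infty(v^p)\to \bar u := \mathbb{E}[\gamma_\infty(v^1)]$ almost surely. For the second term, compactness of the action space gives a uniform bound $|\gamma_N(v^p)-\gamma_\infty(v^p)|\le D$; combined with pointwise convergence $\gamma_N\to\gamma_\infty$ and dominated convergence, one gets $\mathbb{E}|\gamma_N(v^1)-\gamma_\infty(v^1)|\to 0$, and since the summands are i.i.d.\ for fixed $N$, $\mathbb{E}\bigl|\frac1N\sum_{p=1}^N(\gamma_N(v^p)-\gamma_\infty(v^p))\bigr|\le \mathbb{E}|\gamma_N(v^1)-\gamma_\infty(v^1)|\to 0$. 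Hence $\mu_N\to\bar u$ in probability (indeed one can upgrade to a stronger mode with a little care, but probability suffices). Likewise $\gamma_N(v^1)\to\gamma_\infty(v^1)$ a.s. Then by continuity of $c$ in its arguments and the continuous mapping theorem, $c(\omega_0,\gamma_N(v^1),\mu_N)\to c(\omega_0,\gamma_\infty(v^1),\bar u)$ in probability; since these random variables are uniformly bounded (compact action space, and $c$ continuous hence bounded on the relevant compact set, using (A1) that $c$ is $\mathbb{R}_+$-valued and continuous), we may pass to the limit in expectation to get
\begin{equation*}
\lim_{N\to\infty} J_N(\underline\gamma^*_N) = \mathbb{E}\bigl[c(\omega_0,\gamma_\infty(v^1),\bar u)\bigr].
\end{equation*}

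Next I would identify the right-hand side with $J(\underline\gamma^*)$. Using the identically-symmetric limit policy $\gamma^*=(\gamma_\infty,\gamma_\infty,\dots)$ in \eqref{eq:2.5.5}, the same i.i.d.\ law of large numbers argument shows that under $\underline\gamma^*$ the inner average $\frac1N\sum_{i=1}^N c(\omega_0,\gamma_\infty(v^i),\frac1N\sum_{p=1}^N\gamma_\infty(v^p))$ converges (a.s., hence the $\limsup$ equals the limit) to $\mathbb{E}[c(\omega_0,\gamma_\infty(v^1),\bar u)]$ after taking expectations — here one invokes the law of large numbers for the triangular array $c(\omega_0,\gamma_\infty(v^i),\mu_N)$, noting that $\mu_N\to\bar u$ and the per-coordinate terms are i.i.d.\ given $\omega_0$, plus boundedness to move the expectation inside. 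Thus $J(\underline\gamma^*) = \mathbb{E}[c(\omega_0,\gamma_\infty(v^1),\bar u)] = \lim_N J_N(\underline\gamma^*_N)$, which certainly implies \eqref{eq:2.14}, and Theorem \ref{the:5} then yields that $\gamma^*_\infty$ is globally team optimal for ($\mathcal{P}_\infty$).

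I expect the main obstacle to be making the concentration-of-measure step fully rigorous: the cost couples the "tagged" agent's action $\gamma_N(v^1)$ with the empirical mean $\mu_N$ which itself contains $\gamma_N(v^1)$, and $\gamma_N$ varies with $N$, so one is really dealing with a triangular array rather than a fixed sequence — care is needed that the law of large numbers and the exchangeability of $v^p$ survive this, and that the convergence $\gamma_N\to\gamma_\infty$ interacts correctly with the averaging (this is exactly where compactness of the action space, giving uniform boundedness, does the heavy lifting to upgrade pointwise convergence of policies to $L^1$-convergence and to license dominated convergence throughout). A secondary technical point is ensuring the continuity of $c$ together with compactness of the action space actually delivers the uniform integrability needed to interchange limit and expectation; if $c$ were only continuous without a boundedness guarantee this would require an extra moment condition, but compactness of the action range plus continuity of $c$ makes $c$ bounded on the image, so this is clean here.
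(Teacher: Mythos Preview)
Your overall strategy---reducing to Theorem~\ref{the:5} via symmetry (Theorem~\ref{rem:3.3}) and a law-of-large-numbers concentration argument for the empirical mean of actions---matches the paper's route. The paper works with empirical \emph{measures} $Q_N=\frac1N\sum_i\delta_{(\gamma^*_N(v^i),v^i)}$ on $\mathbb{U}\times\mathbb{V}$ rather than scalar means, but the spirit is the same: show $Q_N$ (or your $\mu_N$) concentrates and then pass to the limit in the cost.

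There is, however, a genuine gap at the step where you pass the limit inside the expectation. You write that $c(\omega_0,\gamma_N(v^1),\mu_N)$ is ``uniformly bounded (compact action space, and $c$ continuous hence bounded on the relevant compact set).'' This is false: $c$ is a function of three arguments, $c:\Omega_0\times\mathbb{R}^n\times\mathbb{R}^n\to\mathbb{R}_+$, and compactness of the action space constrains only the second and third. The exogenous variable $\omega_0$ lives in $\Omega_0$, which is nowhere assumed compact (in the paper's own examples $\omega_0$ is Gaussian). So continuity gives you no uniform bound, and the dominated-convergence step---which you invoke twice, once for $\lim_N J_N(\underline\gamma^*_N)$ and once to identify $J(\underline\gamma^*)$---is unjustified. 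The same issue undermines your closing paragraph's claim that ``compactness of the action range plus continuity of $c$ makes $c$ bounded on the image.''

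The paper sidesteps this by never claiming boundedness. It uses only nonnegativity of $c$: a Fatou argument gives $\liminf_N J_N(\underline\gamma^*_N)\ge \mathbb{E}[\liminf_N(\cdots)]$, and then the inner $\liminf$ is handled by a truncation $G_N^M:=\min\{M,c(\omega_0,u,\int u\,Q_N(du\times\mathbb{V}))\}$ together with a result on convergence of integrals with varying measures (Serfozo/Langen/Feinberg--Kasyanov--Zgurovsky) applied to the bounded, continuously-convergent $G_N^M$, followed by monotone convergence in $M$. Compactness of $\mathbb{U}$ is used only to ensure $\int_{\mathbb{U}}u\,Q_N(du\times\mathbb{V})\to\int_{\mathbb{U}}u\,Q(du\times\mathbb{V})$ from weak convergence of the marginals, not to bound $c$. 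If you want to rescue your more elementary scalar-mean approach, you need to replace the DCT step with this Fatou-plus-truncation mechanism (or impose a genuine uniform-integrability hypothesis, as in Remark~\ref{rem:3}(ii) or Theorem~\ref{the:4.9.9}).
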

\begin{proof}
Action spaces and the cost function are convex and following from the hypothesis that $v^{i}$s are i.i.d. random variables (hence they are exchangeable conditioned on $\omega_{0}$) and the result of Theorem \ref{rem:3.3}, one can consider a sequence of $N$-DM teams which are symmetrically optimal that defines ($\mathcal{P}_{N}$) (see \eqref{eq:2.6.6}) and whose limit is identified with $(\mathcal{P}_{\infty})$. Define empirical measures on actions and observation of $Q_{N}(B):=\frac{1}{N}\sum_{i=1}^{N}\delta_{\zeta_{N}^{i}}(B)$, and  $\tilde{Q}_{N}(B):=\frac{1}{N}\sum_{i=1}^{N}\delta_{{\zeta}_{\infty}^{i}}(B)$, where $B \in \mathcal{Z}:=\mathbb{U}\times \mathbb{V}$, $\zeta_{N}^{i}:=(\gamma^{*}_{N}(v^{i}),v^{i})$, ${\zeta}_{\infty}^{i}:=(\gamma^{*}_{\infty}(v^{i}),v^{i})$, and $\delta_{Y}(\cdot)$ is the Dirac measure for any random variable $Y$. In the following, we first show that $\tilde{Q}_{N}$ converges weakly to $Q=Law(\zeta^{i}_{\infty})$ $\mathbb{P}$-almost surely, then we show \eqref{eq:2.14} holds, and we invoke Theorem \ref{the:5}. 
\begin{itemize}[wide = 0pt]
\item [\textbf{(Step 1):}] 
For every $g\in C_{b}(\mathcal{Z})$, where we denote $C_{b}(X)$ as the space of continuous and bounded functions in $X$, we have
\begin{flalign}
&\lim\limits_{N\rightarrow \infty}\mathbb{P}\left(\left|\int g dQ_{N}-\int g d\tilde{Q}_{N}\right|\geq\epsilon\right)\nonumber\\ 
&\ \leq\epsilon^{-1}\lim\limits_{N\rightarrow \infty}\frac{1}{N}\sum_{i=1}^{N}\mathbb{E}\bigg[\bigg|g(\gamma^{*}_{N}(v^{i}),v^{i})-g(\gamma^{*}_{\infty}(v^{i}),v^{i})\bigg|\bigg]\label{eq:4.1.1}\\
&\ =\epsilon^{-1}\lim\limits_{N\rightarrow \infty} \mathbb{E}\bigg[\bigg|g(\gamma^{*}_{N}(v^{i}),v^{i})-g(\gamma^{*}_{\infty}(v^{i}),v^{i})\bigg|\bigg]\label{eq:199.5.1}\\
&\ =\epsilon^{-1}\mathbb{E}\bigg[\lim\limits_{N\rightarrow \infty}\bigg|g(\gamma^{*}_{N}(v^{i}),v^{i})-g(\gamma^{*}_{\infty}(v^{i}),v^{i})\bigg|\bigg]=0 \label{eq:199.5.2},
\end{flalign}
where \eqref{eq:4.1.1} follows from Markov's inequality, the triangle inequality and the definition of the empirical measure, and \eqref{eq:199.5.1} follows from the hypothesis that $v^{i}$s are identical random variables. Since $g$ is bounded and continuous, the dominated convergence theorem implies \eqref{eq:199.5.2}. Hence, for every subsequence there exists a subsubsequence such that $|\int g dQ_{N_{k_{l}}}-\int g d\tilde{Q}_{N_{k_{l}}}|$ converges to zero $\mathbb{P}$-almost surely as $l \to \infty$. On the other hand, since $v^{i}$s are i.i.d. random variables, the strong law of large numbers (SLLN)  implies $\tilde{Q}_{N}$ converges  weakly to $Q$ $\mathbb{P}$-almost surely, that is $|\int g d\tilde{Q}_{N}-\int g d{Q}|$ converges to zero $\mathbb{P}$-almost surely for every $g \in C_{b}(\mathcal{Z})$. Hence, through choosing a suitable subsequence, $Q_{N_{k}}$ converges $\mathbb{P}$-almost sure weakly to $Q$ since for every continuous and bounded function $g$, we have $\mathbb{P}$-a.s.,
\begin{flalign}
&\lim\limits_{N\rightarrow \infty}\left|\int g dQ_{N}-\int g d{Q}\right|\nonumber\\&\leq \lim\limits_{N\rightarrow \infty}\bigg(\bigg|\int g dQ_{N}-\int g d\tilde{Q}_{N}\bigg|+\bigg|\int g d\tilde{Q}_{N}-\int g d{Q}\bigg|\bigg)\nonumber\\&=0\label{eq:20.5}.
\end{flalign}
\item [\textbf{(Step 2):}] 
Following from \cite[Theorem 3.5]{serfozo1982convergence} and \cite[Lemma 1.5]{feinberg2016partially}, or \cite[Theorem 3.1]{langen1981convergence} using the fact that the cost function is non-negative and continuous, we have
\begin{flalign}
&\limsup\limits_{N\rightarrow \infty}\frac{1}{N}\sum_{i=1}^{N}\mathbb{E}\left[c\left(\omega_{0}, \gamma^{*}_{N}(v^{i}),\frac{1}{N}\sum_{i=1}^{N}\gamma^{*}_{N}(v^{i})\right)\right]\nonumber\\
&\geq \liminf\limits_{N\rightarrow \infty}\mathbb{E}\bigg[\mathbb{E}\bigg[\int_{\cal{Z}}c\left(\omega_{0}, u,\int_{\mathbb U}uQ_{N}(du \times \mathbb{V})\right)\nonumber\\
&~~~~~\times Q_{N}(du,dv)\bigg|\omega_{0}\bigg]\bigg]\nonumber\\
&\geq \mathbb{E}\bigg[\mathbb{E}\bigg[\liminf\limits_{N\rightarrow \infty}\int_{\cal{Z}}c\left(\omega_{0}, u,\int_{\mathbb U}uQ_{N}(du \times \mathbb{V})\right)\nonumber\\
&~~~~~\times  Q_{N}(du,dv)\bigg|\omega_{0}\bigg]\bigg]\nonumber\\
&\geq\mathbb{E}\bigg[\mathbb{E}\bigg[\int_{\cal{Z}}c\left(\omega_{0}, u,\int_{\mathbb U}uQ(du \times \mathbb{V})\right)Q(du,dv)\bigg|\omega_{0}\bigg]\bigg] \label{eq:2121},
\end{flalign}
where the first inequality follows from the definition of $Q_{N}$ and replacing limsup by liminf. The second inequality  follows from Fatou's lemma. In the following, we justify \eqref{eq:2121}.  Since $Q_{N}$ converges  weakly to $Q$ $\mathbb{P}$-almost surely, using continuous mapping theorem \cite[page 20]{Billingsley}, we have $Q_{N}(du \times \mathbb{V})$ converges weakly to $Q(du\times \mathbb{V})$ $\mathbb{P}$-almost surely, hence the compactness of $\mathbb{U}$ implies $\int_{\mathbb U}uQ_{N}(du \times \mathbb{V}) \to \int_{\mathbb U}uQ(du \times \mathbb{V})$ $\mathbb{P}$-almost surely, and continuity of the cost function $\mathbb{P}$-almost surely implies $c(\omega_{0}, u,\int_{\mathbb U}uQ_{N}(du \times \mathbb{V}))$ converges to $c\left(\omega_{0}, u,\int_{\mathbb U}uQ(du \times \mathbb{V})\right)$ $\mathbb{P}$-almost surely. Define a non-negative bounded sequence $G_{N}^{M}:=\min\{M,c\left(\omega_{0}, u,\int_{\mathbb U}uQ_{N}(du \times \mathbb{V})\right)\}$, where $G_{M}^{N} \uparrow G^{N}:=c\left(\omega_{0}, u,\int_{\mathbb U}uQ_{N}(du \times \mathbb{V})\right)$ as $M \to \infty$, then we have $\mathbb{P}$-almost surely
\begin{flalign*}
&\liminf\limits_{N\rightarrow \infty}\int_{\cal{Z}}c\left(\omega_{0}, u,\int_{\mathbb U}uQ_{N}(du \times \mathbb{V})\right)Q_{N}(du,dv)\\
&\ =\lim\limits_{M\rightarrow \infty}\liminf\limits_{N\rightarrow \infty}\int_{\cal{Z}}c\left(\omega_{0}, u,\int_{\mathbb U}uQ_{N}(du \times \mathbb{V})\right)Q_{N}(du,dv)\\
&\ \geq \lim\limits_{M\rightarrow \infty}\liminf\limits_{N\rightarrow \infty}\int_{\cal{Z}}G_{N}^{M}Q_{N}(du,dv)\\
&\ = \lim\limits_{M\rightarrow \infty}\int_{\cal{Z}}G^{M}Q(du,dv)\\&\ =\int_{\cal{Z}}c\left(\omega_{0}, u,\int_{\mathbb U}uQ(du \times \mathbb{V})\right)Q(du,dv),
\end{flalign*}
where the first inequality follows from the definition of $G^{M}_{N}$ and the second equality is true using \cite[Theorem 3.5]{serfozo1982convergence} since $G_{N}^{M}$ is bounded (hence it is uniformly $Q_{N}$-integrable) and continuously converges to $G^{M}$, and the monotone convergence theorem implies the last equality. 
Hence, \eqref{eq:2121} holds which implies \eqref{eq:2.14}, and the proof is completed using Theorem \ref{the:5}.
\end{itemize}
\end{proof}

\begin{remark}\label{rem:asym}
The proof above reveals that if $\mathbb{P}$-almost surely the sequence $\{Q_{N}\}_{N}$ converges weakly to $Q$, then Theorem \ref{the:4.5} can be generalized to a class of team problems defined as ($\mathcal{P}_{\infty}$) (see \eqref{eq:2.5.5}) which may include ones with a non-convex cost function and/or the ones with conditionally non-exchangeable observations: This relaxation contains a class of problems (see e.g. Example 4 in Section \ref{Ex:4.4}) where one can consider a sequence of $N$-DM teams which admits asymmetric optimal policies that define ($\mathcal{P}_{N}$) (see \eqref{eq:2.6.6}), but whose limit is identified with $(\mathcal{P}_{\infty})$ under an optimal sequence of policies.
\end{remark}

In the following, we relax the hypothesis that observations of decision makers are independent.
\begin{proposition}\label{prop:4.7}
Consider a team defined as  ($\mathcal{P}_{\infty}$) (see \eqref{eq:2.5.5}) with the convex cost function in the second and third arguments $\mathbb{P}$-almost surely.  Let the action space be compact and convex for each decision maker, and $v^{i}=h(x,z^{i})$, where $z^{i}$s are i.i.d. random variables. If there exists a sequence of optimal policies for ($\mathcal{P}_{N}$) (see \eqref{eq:2.6.6}), $\{\gamma^{*}_{N}\}_N$, which converges pointwise to $\gamma^{*}_{\infty}$ as $N \to \infty$, then $\gamma^{*}_{\infty}$ (which is identically symmetric) is an optimal policy for ($\mathcal{P}_{\infty}$).
\end{proposition}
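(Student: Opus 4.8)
The plan is to mirror the proof of Theorem~\ref{the:4.5}, replacing the ordinary strong law of large numbers by its conditional version given the common randomness $x$. First I would note that, since the $z^{i}$ are i.i.d.\ and $x$ is shared by all decision makers, the measurements $v^{i}=h(x,z^{i})$ are exchangeable, and in fact conditionally i.i.d.\ given $x$; in particular they are exchangeable conditioned on $\omega_{0}$, so Theorem~\ref{rem:3.3}, together with convexity of the action spaces and of the cost in its second and third arguments, lets one replace the given sequence of optimal policies for ($\mathcal{P}_{N}$) by a symmetrically optimal one. Hence $\gamma^{*}_{N}$ may be taken identical across decision makers and its pointwise limit $\gamma^{*}_{\infty}$ is identically symmetric, and, exactly as in Theorem~\ref{the:4.5}, it suffices to verify \eqref{eq:2.14}, i.e.\ $\limsup_{N\to\infty}J_{N}(\underline\gamma^{*}_{N})\geq J(\underline\gamma^{*}_{\infty})$, and then invoke Theorem~\ref{the:5}.

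Second, I would introduce the empirical measures $Q_{N}=\frac{1}{N}\sum_{i=1}^{N}\delta_{\zeta^{i}_{N}}$ and $\tilde{Q}_{N}=\frac{1}{N}\sum_{i=1}^{N}\delta_{\zeta^{i}_{\infty}}$ on $\mathcal{Z}=\mathbb{U}\times\mathbb{V}$, with $\zeta^{i}_{N}=(\gamma^{*}_{N}(v^{i}),v^{i})$ and $\zeta^{i}_{\infty}=(\gamma^{*}_{\infty}(v^{i}),v^{i})$, as in Theorem~\ref{the:4.5}. The essential new feature is that the $\zeta^{i}_{\infty}$ are now only conditionally i.i.d.\ given $x$, so the limiting empirical measure is the \emph{random} probability measure $Q_{x}:=\mathrm{Law}(\zeta^{1}_{\infty}\mid x)$; by the conditional strong law of large numbers, $\tilde{Q}_{N}\to Q_{x}$ weakly, $\mathbb{P}$-almost surely. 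The comparison of $Q_{N}$ with $\tilde{Q}_{N}$ is unchanged: since the $v^{i}$ are identically distributed, Markov's inequality and the triangle inequality give, for every $g\in C_{b}(\mathcal{Z})$,
\begin{equation*}
\mathbb{P}\Big(\big|\textstyle\int g\,dQ_{N}-\int g\,d\tilde{Q}_{N}\big|\geq\epsilon\Big)\leq\epsilon^{-1}\,\mathbb{E}\big[\,\big|g(\gamma^{*}_{N}(v^{1}),v^{1})-g(\gamma^{*}_{\infty}(v^{1}),v^{1})\big|\,\big]\longrightarrow 0
\end{equation*}
by dominated convergence (using boundedness and continuity of $g$ and $\gamma^{*}_{N}\to\gamma^{*}_{\infty}$ pointwise); a subsequence/sub-subsequence argument then yields $Q_{N}\to Q_{x}$ weakly, $\mathbb{P}$-almost surely.

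Third, I would repeat Step~2 of the proof of Theorem~\ref{the:4.5} verbatim but with the conditioning on $\omega_{0}$ replaced by conditioning on $(\omega_{0},x)$: replacing $\limsup$ by $\liminf$, applying Fatou's lemma, then using the continuous mapping theorem \cite{Billingsley} and compactness of $\mathbb{U}$ to get $\int_{\mathbb{U}}u\,Q_{N}(du\times\mathbb{V})\to\int_{\mathbb{U}}u\,Q_{x}(du\times\mathbb{V})$ $\mathbb{P}$-a.s., and finally the truncation $G^{M}_{N}=\min\{M,\,c(\omega_{0},u,\int_{\mathbb{U}}u\,Q_{N}(du\times\mathbb{V}))\}$ together with \cite[Theorem 3.5]{serfozo1982convergence} and the monotone convergence theorem, one obtains
\begin{equation*}
\limsup_{N\to\infty}J_{N}(\underline\gamma^{*}_{N})\ \geq\ \mathbb{E}\Big[\,\mathbb{E}\big[\,\textstyle\int_{\mathcal{Z}}c\big(\omega_{0},u,\int_{\mathbb{U}}u\,Q_{x}(du\times\mathbb{V})\big)\,Q_{x}(du,dv)\ \big|\ \omega_{0},x\,\big]\,\Big],
\end{equation*}
and the right-hand side equals $J(\underline\gamma^{*}_{\infty})$ by a further application of the conditional strong law of large numbers to $\mu_{N}=\frac{1}{N}\sum_{p=1}^{N}\gamma^{*}_{\infty}(v^{p})\to\int_{\mathbb{U}}u\,Q_{x}(du\times\mathbb{V})$ and continuity of $c$, exactly as in Theorem~\ref{the:4.5}. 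This is \eqref{eq:2.14}, so Theorem~\ref{the:5} completes the proof. The main obstacle, and the only genuinely new ingredient relative to Theorem~\ref{the:4.5}, is the random limit measure $Q_{x}$: one must ensure the conditional strong law of large numbers applies (which it does, since the $\zeta^{i}_{\infty}$ are conditionally i.i.d.\ given $x$ by the i.i.d.\ structure of the $z^{i}$), and carry out all the weak-convergence and lower-semicontinuity steps $\omega$-wise (equivalently, conditionally on $x$) before taking the outer expectation.
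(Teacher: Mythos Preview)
Your proposal is correct and follows essentially the same approach as the paper's own proof. The one noteworthy difference is in how you establish the weak convergence $\tilde{Q}_{N}\to Q_{x}$: you invoke the conditional strong law of large numbers directly (since the $\zeta^{i}_{\infty}$ are conditionally i.i.d.\ given $x$), which yields $\mathbb{P}$-a.s.\ weak convergence without further work; the paper instead uses Chebyshev's inequality together with the law of iterated expectations, conditioning on $x$ to exploit conditional independence and bound the conditional second moment, obtaining convergence in probability and then passing to an a.s.\ convergent subsequence. Your route is slightly more direct and avoids the extra subsequence extraction, but both arguments rest on the same structural fact---conditional independence of the $v^{i}$ given $x$---and the remainder of the proof (the $Q_{N}$ versus $\tilde{Q}_{N}$ comparison via Markov's inequality, and the Fatou/truncation/Serfozo argument of Step~2 in Theorem~\ref{the:4.5}) is identical in both.
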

\begin{proof}
Since $z^{i}$s are i.i.d. random variables, observations, $v^{i}=h(x,z^{i})$, have identical distributions (but are not independent), and similar to the proof of Theorem \ref{the:4.5}, using symmetry, one can show \eqref{eq:199.5.2} holds. In the following, we show \eqref{eq:20.5} and \eqref{eq:2121} hold.
\begin{flalign}
&\scalemath{0.95}{ \lim\limits_{N\rightarrow \infty}\mathbb{P}\bigg(\bigg|\int g d\tilde{Q}_{N}-\int g d{Q}\bigg|\geq\epsilon\bigg)}\nonumber\\
&\scalemath{0.95}{\leq\frac{1}{\epsilon^{2}}\lim\limits_{N\rightarrow \infty}\mathbb{E}\bigg[\bigg|\frac{1}{N}\sum_{i=1}^{N}g(\gamma^{*}_{\infty}(v^{i}),v^{i})-\mathbb{E}(g(\gamma^{*}_{\infty}(v^{1}),v^{1}))\bigg|^{2}\bigg]}\label{eq:5.5.20}\\
&\scalemath{0.95}{ =\lim\limits_{N\rightarrow \infty}(\epsilon)^{-2}\mathbb{E}\bigg[\mathbb{E}\bigg[\bigg|\frac{1}{N}\sum_{i=1}^{N}L(\gamma^{*}_{\infty}(v^{i}),v^{i})\bigg|^{2}\bigg| x\bigg]\bigg]}\label{eq:5.5.21}\\
&\scalemath{0.95}{ =0}\label{eq:5.5.23}
\end{flalign}
where $L(\gamma^{*}_{\infty}(v^{i}),v^{i}):=g(\gamma^{*}_{\infty}(v^{i}),v^{i})-\mathbb{E}(g(\gamma^{*}_{\infty}(v^{1}),v^{1})|x)$, and \eqref{eq:5.5.20} follows from Chebyshev's inequality, and \eqref{eq:5.5.21} follows from the law of iterated expectations. The structure $v^{i}=h(x,z^{i})$ implies conditional independence of $v^{i}$s given $x$, hence, using the law of large numbers and since $g \in C_{b}(\mathcal{Z})$, we have \eqref{eq:5.5.23}, and this implies $\tilde{Q}_{N_{k}}$ converges  weakly to $Law(\zeta^{i}_{\infty}|x)$ $\mathbb{P}$-almost surely as $k \to \infty$, hence through choosing a suitable subsequence, $Q_{{N_{k}}_{l}}$ converges $\mathbb{P}$-almost sure weakly to $Q=Law(\zeta^{i}_{\infty}|x)$ as $l \to \infty$ and the rest of the proof to justify \eqref{eq:2121} is the same as that of Theorem \ref{the:4.5}.
\end{proof}
\begin{remark}
Existence of optimal policies for ($\mathcal{P}_{N}$) and dynamic teams satisfying static reduction have been studied in \cite{yuksel2018general} and \cite{gupta2014existence}. In \cite[Theorem 4.8]{yuksel2018general}, the existence of optimal policies achieved under $\sigma$-compactness of each decision maker's action space and under mild conditions on the control law and the cost function. Hence the existence of identically symmetric optimal policies for ($\mathcal{P}_{N}$) (see \eqref{eq:2.6.6}) follows from symmetry and \cite[Theorem 4.8]{yuksel2018general}; thus, the existence result for ($\mathcal{P}_{\infty}$) is obtained under assumptions of Theorem \ref{the:4.5}.
\end{remark} 

In the following, action spaces need not be compact; this is particularly important for LQG models as we will see in the next section.

\begin{theorem} \label{the:4.9.9}
Consider a team defined as  ($\mathcal{P}_{\infty}$) (see \eqref{eq:2.5.5}) with the convex cost function in the second and third arguments $\mathbb{P}$-almost surely.  Let the action spaces be convex for each decision maker. Let $v^{i}$s be i.i.d. random variables. If there exists a sequence of optimal policies for ($\mathcal{P}_{N}$) (see \eqref{eq:2.6.6}), $\{\gamma^{*}_{N}\}_N$, which converges pointwise to $\gamma^{*}_{\infty}$ as $N \to \infty$, and 
\begin{itemize}
\item [(A3)] for some $\delta >0$, $\sup\limits_{N\geq 1}\mathbb{E}(|\gamma^{*}_{N}(v^{1})|^{1+\delta})<\infty$,
\end{itemize}
then $\gamma^{*}_{\infty}$ (which is identically symmetric) is an optimal policy for ($\mathcal{P}_{\infty}$).
\end{theorem}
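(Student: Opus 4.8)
The plan is to follow the proof of Theorem \ref{the:4.5} almost verbatim, the only change being that the compactness of $\mathbb{U}$ is replaced, at the single point where it was used, by the uniform $(1+\delta)$-moment bound (A3). As in Theorem \ref{the:4.5}, Theorem \ref{rem:3.3} applies (the action spaces and the cost are convex, and i.i.d.\ observations are in particular exchangeable conditioned on $\omega_{0}$), so the optimal policies $\gamma^{*}_{N}$ for $(\mathcal{P}_{N})$ may be taken identically symmetric; then for each $N$ the random variables $\gamma^{*}_{N}(v^{1}),\dots,\gamma^{*}_{N}(v^{N})$ are i.i.d. Define the empirical measures $Q_{N}$ and $\tilde Q_{N}$ on $\mathcal{Z}=\mathbb{U}\times\mathbb{V}$ exactly as there. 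Step~1 of that proof, which yields $Q_{N}\Rightarrow Q:=\mathrm{Law}(\zeta^{i}_{\infty})$ $\mathbb{P}$-a.s.\ along a subsequence, uses only bounded continuous test functions, the SLLN for the i.i.d.\ empirical measure $\tilde Q_{N}$, Markov's inequality and dominated convergence — none of which needs compactness — and so goes through unchanged (with the usual diagonal extraction over a countable convergence-determining class of test functions).

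The one place in the proof of Theorem \ref{the:4.5} that genuinely uses compactness is the claim that the mean-field term $\int_{\mathbb U}u\,Q_{N}(du\times\mathbb{V})=\frac1N\sum_{p=1}^{N}\gamma^{*}_{N}(v^{p})$ converges to $\int_{\mathbb U}u\,Q(du\times\mathbb{V})$. I would replace that step as follows. From $\gamma^{*}_{N}(v^{1})\to\gamma^{*}_{\infty}(v^{1})$ pointwise, Fatou's lemma and (A3) give $\mathbb{E}\,|\gamma^{*}_{\infty}(v^{1})|^{1+\delta}\le\sup_{N}\mathbb{E}\,|\gamma^{*}_{N}(v^{1})|^{1+\delta}<\infty$, so $\gamma^{*}_{\infty}(v^{1})\in L^{1}$. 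Write
\begin{equation*}
\frac1N\sum_{p=1}^{N}\gamma^{*}_{N}(v^{p})=\frac1N\sum_{p=1}^{N}\gamma^{*}_{\infty}(v^{p})+\frac1N\sum_{p=1}^{N}\big(\gamma^{*}_{N}(v^{p})-\gamma^{*}_{\infty}(v^{p})\big).
\end{equation*}
The first term converges $\mathbb{P}$-a.s.\ to $\mathbb{E}[\gamma^{*}_{\infty}(v^{1})]$ by the SLLN. For the remainder, since the $v^{p}$ are identically distributed,
\begin{equation*}
\mathbb{E}\Big|\tfrac1N\textstyle\sum_{p=1}^{N}\big(\gamma^{*}_{N}(v^{p})-\gamma^{*}_{\infty}(v^{p})\big)\Big|\le\mathbb{E}\,\big|\gamma^{*}_{N}(v^{1})-\gamma^{*}_{\infty}(v^{1})\big|,
\end{equation*}
and because $\sup_{N}\mathbb{E}\,|\gamma^{*}_{N}(v^{1})-\gamma^{*}_{\infty}(v^{1})|^{1+\delta}<\infty$ (by (A3) and the moment bound on $\gamma^{*}_{\infty}$ just obtained), the family $\{\gamma^{*}_{N}(v^{1})-\gamma^{*}_{\infty}(v^{1})\}_{N}$ is uniformly integrable, so by the Vitali convergence theorem this expectation tends to $0$; hence the remainder tends to $0$ in $L^{1}$, thus in probability, and $\mathbb{P}$-a.s.\ along a further subsequence. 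Combining, $\int_{\mathbb U}u\,Q_{N}(du\times\mathbb{V})\to\mathbb{E}[\gamma^{*}_{\infty}(v^{1})]=\int_{\mathbb U}u\,Q(du\times\mathbb{V})$ $\mathbb{P}$-a.s.\ along a subsequence.

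With $Q_{N}\Rightarrow Q$ and the mean-field term converging $\mathbb{P}$-a.s.\ along a common subsequence, Step~2 of the proof of Theorem \ref{the:4.5} now applies unchanged: continuity of $c$ in its third argument gives $c\big(\omega_{0},\cdot\,,\int_{\mathbb U}u\,Q_{N}(du\times\mathbb{V})\big)\to c\big(\omega_{0},\cdot\,,\int_{\mathbb U}u\,Q(du\times\mathbb{V})\big)$ continuously; the truncations $G^{M}_{N}=\min\{M,c(\omega_{0},u,\int_{\mathbb U}u\,Q_{N}(du\times\mathbb{V}))\}$ are bounded and continuously convergent, so \cite[Theorem 3.5]{serfozo1982convergence} (or \cite[Theorem 3.1]{langen1981convergence}) applies, and letting $M\to\infty$ via monotone convergence, together with Fatou's lemma and the device of replacing $\limsup$ by $\liminf$ along the chosen subsequence, yields $\limsup_{N}J_{N}(\gamma^{*}_{N})\ge\mathbb{E}\big[\,\mathbb{E}\big[\int_{\mathcal{Z}}c(\omega_{0},u,\int_{\mathbb U}u\,Q(du\times\mathbb{V}))\,Q(du,dv)\mid\omega_{0}\big]\,\big]=J(\gamma^{*}_{\infty})$, i.e.\ \eqref{eq:2.14} holds; Theorem \ref{the:5} then gives that $\gamma^{*}_{\infty}$ is globally optimal for $(\mathcal{P}_{\infty})$. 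The main obstacle is exactly the mean-field term: because the policy itself varies with $N$ one cannot invoke the SLLN directly on $\{\gamma^{*}_{N}(v^{p})\}_{p}$, and the remedy is the split above into a fixed-policy SLLN part plus an $L^{1}$-vanishing remainder; this is where (A3) is indispensable, as it supplies both the uniform integrability needed for the remainder and the integrability of the limiting mean-field value $\mathbb{E}[\gamma^{*}_{\infty}(v^{1})]$. A minor bookkeeping point is that all the ``$\mathbb{P}$-a.s.\ along a subsequence'' extractions (for the countably many test functions and for the mean-field term) must be carried out compatibly by a diagonal argument.
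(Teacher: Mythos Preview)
Your proposal is correct and takes essentially the same approach as the paper: the paper also states that only the convergence of the mean-field term $\int_{\mathbb U}u\,Q_{N}(du\times\mathbb{V})\to\int_{\mathbb U}u\,Q(du\times\mathbb{V})$ needs to be redone (the rest following Theorem~\ref{the:4.5}), and obtains it via exactly your split---the SLLN for $\frac{1}{N}\sum_{p}\gamma^{*}_{\infty}(v^{p})$ and, for the remainder $\frac{1}{N}\sum_{p}(\gamma^{*}_{N}(v^{p})-\gamma^{*}_{\infty}(v^{p}))$, Markov's inequality together with $\mathbb{E}|\gamma^{*}_{N}(v^{1})-\gamma^{*}_{\infty}(v^{1})|\to 0$ via the uniform integrability supplied by (A3). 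Your version is slightly more explicit (you spell out the Fatou step giving $\gamma^{*}_{\infty}\in L^{1+\delta}$ and the diagonal extraction), but the argument is the same.
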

\begin{proof}
In the following, we just show $\int_{\mathbb U}uQ_{N}(du \times \mathbb{V}) \to \int_{\mathbb U}uQ(du \times \mathbb{V})$ $\mathbb{P}$-almost surely, and the rest of the proof follows from that of Theorem \ref{the:4.5}. We have
\begin{flalign}
\scalemath{0.95}{ \lim\limits_{N\rightarrow \infty}\mathbb{P}}&\scalemath{0.95}{\bigg(\bigg|\int_{\mathbb{U}} u Q_{N}(du\times \mathbb{V})-\int_{\mathbb{U}} u d\tilde{Q}_{N}(du\times \mathbb{V})\bigg|\geq\epsilon\bigg)}\nonumber\\
 &\ \scalemath{0.95}{\leq\epsilon^{-1}\lim\limits_{N\rightarrow \infty}\frac{1}{N}\sum_{i=1}^{N}\mathbb{E}\bigg[\bigg|\gamma^{*}_{N}(v^{i})-\gamma^{*}_{\infty}(v^{i})\bigg|\bigg]}\label{eq:49.9}\\
&\ \scalemath{0.95}{=\epsilon^{-1}\lim\limits_{N\rightarrow \infty} \mathbb{E}\bigg[\bigg|\gamma^{*}_{N}(v^{1})-\gamma^{*}_{\infty}(v^{1})\bigg|\bigg]}\label{eq:49.10}\\
&\ \scalemath{0.95}{=\epsilon^{-1}\mathbb{E}\bigg[\lim\limits_{N\rightarrow \infty}\bigg|\gamma^{*}_{N}(v^{1})-\gamma^{*}_{\infty}(v^{1})\bigg|\bigg]=0}\label{eq:49.11},
\end{flalign}
where \eqref{eq:49.9} follows from Markov's inequality and the triangle inequality, and \eqref{eq:49.10} is true since observations have identical distributions, and \eqref{eq:49.11} follows from the uniform integrability assumption (A3) and the pointwise convergence of $\gamma^{*}_{N}$ using \cite[Theorem 3.5]{Billingsley}. On the other hand, SLLN implies $\mathbb{P}$-almost surely that $\int_{\mathbb U}u{\tilde{Q}}_{N}(du \times \mathbb{V})=\frac{1}{N}\sum_{i=1}^{N}\gamma^{*}_{\infty}(v^{i}) \to \int_{\mathbb U}uQ(du \times \mathbb{V})$, and this completes the proof.
\end{proof}
In the following, we present a result for monotone mean-field coupled teams.
\begin{theorem}\label{the:4.7} Consider a team defined as  ($\mathcal{P}_{\infty}$) (see \eqref{eq:2.5.5}) with the convex cost function in the second and third arguments $\mathbb{P}$-almost surely.  Let the action spaces be convex for each decision maker. Let the cost function be increasing in the last argument, and $v^{i}$s be i.i.d. random variables. If there exists a sequence of optimal policies for ($\mathcal{P}_{N}$), $\{\gamma^{*}_{N}\}_N$ (see \eqref{eq:2.6.6}), which converges pointwise to $\gamma^{*}_{\infty}$ then $\gamma^{*}_{\infty}$ as $N \to \infty$ (which is identically symmetric) is an optimal policy for ($\mathcal{P}_{\infty}$).
\end{theorem}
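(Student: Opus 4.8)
The plan is to verify the single sufficient condition \eqref{eq:2.14} of Theorem \ref{the:5} for the identically symmetric policy $\underline\gamma^{*}=(\gamma^{*}_{\infty},\gamma^{*}_{\infty},\dots)$, by following the proof of Theorem \ref{the:4.5} and replacing the only place where compactness of $\mathbb{U}$ was used by an argument that exploits monotonicity of $c$ in its last argument. First I would invoke Theorem \ref{rem:3.3}: the action spaces are convex, $c$ is convex in the second and third arguments $\mathbb{P}$-a.s., and the $v^{i}$ are i.i.d.\ (hence exchangeable conditioned on $\omega_{0}$), so the $N$-DM teams are symmetrically optimal and we may take the optimal policies to be identically symmetric with common function $\gamma^{*}_{N}$, with $\gamma^{*}_{N}\to\gamma^{*}_{\infty}$ pointwise. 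Exactly as in Step~1 of the proof of Theorem \ref{the:4.5}, writing $Q_{N}$ and $\tilde Q_{N}$ for the empirical measures of $(\gamma^{*}_{N}(v^{i}),v^{i})$ and $(\gamma^{*}_{\infty}(v^{i}),v^{i})$ on $\mathbb{U}\times\mathbb{V}$, the identical law of the $v^{i}$ and the dominated convergence theorem give $|\int g\,dQ_{N}-\int g\,d\tilde Q_{N}|\to0$ $\mathbb{P}$-a.s.\ along a subsequence for every bounded continuous $g$, while the SLLN gives that $\tilde Q_{N}$ converges weakly $\mathbb{P}$-a.s.\ to $Q:=Law(\gamma^{*}_{\infty}(v^{1}),v^{1})$; hence $Q_{N}$ converges weakly to $Q$ $\mathbb{P}$-a.s.\ along a subsequence, and note that this step needs no compactness.

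Compactness was used in Step~2 of Theorem \ref{the:4.5} only to deduce $\int_{\mathbb{U}}u\,Q_{N}(du\times\mathbb{V})\to\int_{\mathbb{U}}u\,Q(du\times\mathbb{V})$ from weak convergence. I would replace this by truncation. Since $c$ is increasing in the last argument,
\[
c\Big(\omega_{0},u,\int_{\mathbb{U}}u'\,Q_{N}(du'\times\mathbb{V})\Big)\;\ge\;c\Big(\omega_{0},u,\int_{\mathbb{U}}(u'\wedge M)\,Q_{N}(du'\times\mathbb{V})\Big),
\]
and (coordinatewise) $u'\mapsto u'\wedge M$ is continuous and bounded, at least when the action space is bounded below, the general case being handled by decomposing the mean-field term into its positive and negative parts; hence $\int_{\mathbb{U}}(u'\wedge M)\,Q_{N}(du'\times\mathbb{V})\to\int_{\mathbb{U}}(u'\wedge M)\,Q(du'\times\mathbb{V})$ $\mathbb{P}$-a.s. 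Running the Fatou step and the bounded-continuous truncation-of-$c$ step of Theorem \ref{the:4.5} with this truncated integrand, and then letting $M\to\infty$ (monotone convergence, using $\int_{\mathbb{U}}(u'\wedge M)\,Q(du'\times\mathbb{V})\uparrow m:=\mathbb{E}[\gamma^{*}_{\infty}(v^{1})]$ together with monotonicity and continuity of $c$), gives $\limsup_{N\to\infty}J_{N}(\underline\gamma^{*}_{N})\ge\mathbb{E}\big[c(\omega_{0},\gamma^{*}_{\infty}(v^{1}),m)\big]$.

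To close \eqref{eq:2.14} it then suffices to show $\mathbb{E}[c(\omega_{0},\gamma^{*}_{\infty}(v^{1}),m)]\ge J(\underline\gamma^{*})$. By exchangeability, $J(\underline\gamma^{*})=\limsup_{N}\mathbb{E}[c(\omega_{0},\gamma^{*}_{\infty}(v^{1}),\mu_{N})]$ with $\mu_{N}=\frac{1}{N}\sum_{p=1}^{N}\gamma^{*}_{\infty}(v^{p})\to m$ $\mathbb{P}$-a.s.\ by the SLLN; since $\bar\mu_{N}:=\sup_{k\ge N}\mu_{k}\downarrow m$ $\mathbb{P}$-a.s.\ and $c$ is increasing in the last argument, $\mathbb{E}[c(\omega_{0},\gamma^{*}_{\infty}(v^{1}),\mu_{N})]\le\mathbb{E}[c(\omega_{0},\gamma^{*}_{\infty}(v^{1}),\bar\mu_{N})]$, and a decreasing monotone convergence argument yields the claim; Theorem \ref{the:5} (with $J(\underline\gamma^{*})<\infty$) then completes the proof. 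I expect the main obstacle to be precisely these two truncation passages carried out without any compactness or uniform-integrability hypothesis: one must orient the truncation so that monotonicity of $c$ produces the needed one-sided bound, must deal with unbounded and possibly non-sign-definite action spaces (by splitting the mean-field term), and must justify the decreasing monotone convergence in the last step, i.e.\ that $\mathbb{E}[c(\omega_{0},\gamma^{*}_{\infty}(v^{1}),\bar\mu_{N_{0}})]<\infty$ for some $N_{0}$.
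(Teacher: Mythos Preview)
Your overall strategy coincides with the paper's: verify \eqref{eq:2.14} for $\underline\gamma^{*}=(\gamma^{*}_{\infty},\gamma^{*}_{\infty},\dots)$ and invoke Theorem~\ref{the:5}, reusing Step~1 of Theorem~\ref{the:4.5} for the $\mathbb{P}$-a.s.\ weak convergence $Q_{N}\Rightarrow Q$, and replacing the compactness-dependent passage by an argument that only needs a one-sided bound on the mean-field term thanks to monotonicity of $c$. The technical device, however, is different. The paper does not truncate: it applies the Fatou lemma for weakly converging probabilities of Feinberg--Kasyanov--Zadoianchuk \cite[Theorem~1.1]{feinberg2014fatou} in one step to obtain
\[
\liminf_{N}\int_{\mathcal Z} c\Big(\omega_{0},u,\int_{\mathbb U}u'\,Q_{N}(du'\times\mathbb V)\Big)Q_{N}(du,dv)\;\ge\;\int_{\mathcal Z}\liminf_{N} c\Big(\omega_{0},u,\int_{\mathbb U}u'\,Q_{N}(du'\times\mathbb V)\Big)Q(du,dv),
\]
and then uses lower semi-continuity of $P\mapsto\int u\,P(du\times\mathbb V)$ under weak convergence together with monotonicity and continuity of $c$ to bound the inner $\liminf$ below by $c(\omega_{0},u,\int_{\mathbb U}u'\,Q(du'\times\mathbb V))$. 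That is the entire replacement for compactness: two displayed inequalities.

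Your route is more elementary---no external Fatou variant is invoked---but correspondingly longer: you carry two nested truncations (on $c$ and on the mean-field integrand), a sign decomposition for unbounded actions, and a separate closing step $\mathbb{E}[c(\omega_{0},\gamma^{*}_{\infty}(v^{1}),m)]\ge J(\underline\gamma^{*})$ via $\bar\mu_{N}\downarrow m$ and monotone convergence, which the paper leaves implicit. Both arguments rest on the same idea (monotonicity converts the failure of $\mu_{N}\to m$ into a usable inequality $\liminf_{N}\mu_{N}\ge m$), and the obstacles you flag---orienting the truncation correctly, the sign-indefinite case, and the finiteness needed for the decreasing monotone convergence---are exactly the places where the paper's one-line appeal to \cite{feinberg2014fatou} and to lower semi-continuity does the work for free.
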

\begin{proof}
We show \eqref{eq:2.14} holds, then we invoke Theorem \ref{the:5}. We use the same definitions in Theorem \ref{the:4.5} for measures $Q_{N}$ and $Q$. We have
\begin{flalign}
&\scalemath{0.95}{\mathbb{E}\bigg[\mathbb{E}\bigg[\liminf\limits_{N\rightarrow \infty}\int_{\cal{Z}}c\left(\omega_{0}, u,\int_{\mathbb U}uQ_{N}(du \times \mathbb{V})\right)Q_{N}(du,dv)\bigg|\omega_{0}\bigg]\bigg]}\nonumber\\
&\scalemath{0.95}{\geq \mathbb{E}\bigg[\mathbb{E}\bigg[\int_{\cal{Z}}\liminf\limits_{N\rightarrow \infty}c\left(\omega_{0}, u,\int_{\mathbb U}uQ_{N}(du \times \mathbb{V})\right)}\nonumber\\
&~~~~~~~~\scalemath{0.95}{\times Q(du,dv)\bigg|\omega_{0}\bigg]\bigg]}\label{eq:21.5}\\
&\scalemath{0.95}{\geq \mathbb{E}\bigg[\mathbb{E}\bigg[\int_{\cal{Z}}c\left(\omega_{0}, u,\int_{\mathbb U}uQ(du \times \mathbb{V})\right)Q(du,dv)\bigg|\omega_{0}\bigg]\bigg]}, \label{eq:23.5}
\end{flalign}
where \eqref{eq:21.5} follows from  a version of Fatou's lemma in \cite[Theorem 1.1]{feinberg2014fatou}, and \eqref{eq:23.5} is true since from the lower semi-continuity of $\int_{\mathbb U}uQ_{N}(du \times \mathbb{V})$, we have $\liminf\limits_{N\rightarrow \infty}\int_{\mathbb U}uQ_{N}(du \times \mathbb{V})\geq \int_{\mathbb U}uQ(du \times \mathbb{V})$, and continuity and the hypothesis that the cost function is increasing in the last argument imply for all $u \in \mathbb{U}$, $\liminf\limits_{N\rightarrow \infty}c\left(\omega_{0}, u,\int_{\mathbb U}uQ_{N}(du \times \mathbb{V})\right)\geq c\left(\omega_{0}, u,\int_{\mathbb U}uQ(du \times \mathbb{V})\right)$ $\mathbb{P}$-almost surely, and this completes the proof.
\end{proof}


In the following, observations need not be identical or independent.
\begin{theorem}\label{lem:4.1}
Consider a team defined as  ($\mathcal{P}_{\infty}$) (see \eqref{eq:2.5.5}) with the convex cost function in the second and third arguments $\mathbb{P}$-almost surely.  Let the action spaces be convex for each decision maker. Let (a), and (c) in Theorem \ref{the:6} hold, and let observations be exchangeable conditioned on $\omega_{0}$. Assume there exists a sequence \/ $\{\gamma^{*}_{N}\}_{N}$\@ converges pointwise \/ to\/ $\gamma^{*}_{\infty}$\@ as $N \to \infty$, and 
let $\mathbb{P}$-a.s.
\begin{equation}
\bigg|\gamma^{*}_{N}(v^{i})-\gamma^{*}_{\infty}(v^{i})\bigg|^{2} \leq \frac{f(v^{i})h(N)}{N}\label{eq:22.4.5},
\end{equation}
where $\lim\limits_{N\rightarrow \infty}N^{-1}\sum_{i=1}^{N}f(v^{i})<\infty$ and $\lim\limits_{N \rightarrow \infty}{h(N)}=0$.
Then, a team optimal policy for ($\mathcal{P}_{\infty}$) is symmetrically optimal and an optimal policy is identified as a limit of a sequence of team optimal policies for ($\mathcal{P}_{N}$) (see \eqref{eq:2.6.6}) as $N\to \infty$.
\end{theorem}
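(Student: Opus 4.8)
The plan is to reduce the claim to Theorem~\ref{the:6}. By the convexity of the action spaces, the convexity of the cost in its second and third arguments, and the conditional exchangeability of the observations given $\omega_0$, Theorem~\ref{rem:3.3} shows that each $(\mathcal{P}_N)$ is symmetrically optimal; hence the optimal policy $\underline{\gamma}^*_N$ may be taken identically symmetric, represented by a single measurable map $\gamma^*_N$, and by hypothesis $\gamma^*_N\to\gamma^*_\infty$ pointwise with $\gamma^*_\infty$ identically symmetric. Conditions (a) and (c) of Theorem~\ref{the:6} are assumed directly (with $\gamma^{i*}_\infty=\gamma^*_\infty$ for all $i$), so the only thing left is to establish the uniform-in-$i$ convergence (b) of Theorem~\ref{the:6}.

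This is the heart of the argument and is where \eqref{eq:22.4.5} enters. From \eqref{eq:22.4.5} one obtains, $\mathbb{P}$-a.s.,
\[
\sup_{1\le i\le N}\bigl|\gamma^*_N(v^i)-\gamma^*_\infty(v^i)\bigr|^2\ \le\ \frac{h(N)}{N}\,\max_{1\le i\le N}f(v^i).
\]
Since $f\ge 0$ and, $\mathbb{P}$-a.s., $N^{-1}\sum_{i=1}^N f(v^i)$ converges to a finite limit, the increments $f(v^N)/N$ tend to $0$, and a routine pathwise comparison of $\max_{1\le i\le N}f(v^i)$ with $\epsilon N$ then gives $\max_{1\le i\le N}f(v^i)/N\to 0$ $\mathbb{P}$-a.s.; combined with $h(N)\to 0$ this yields $\sup_{1\le i\le N}|\gamma^*_N(v^i)-\gamma^*_\infty(v^i)|\to 0$ $\mathbb{P}$-a.s., which is precisely condition (b). (If instead one wishes to use the relaxation in Remark~\ref{rem:3}(iii), a single application of the Cauchy--Schwarz inequality to \eqref{eq:22.4.5} gives $N^{-1}\sum_{i=1}^N|\gamma^*_N(v^i)-\gamma^*_\infty(v^i)|\to 0$, which is exactly the ingredient Step~1 of the proof of Theorem~\ref{the:6} requires, now under convergence in probability.)

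With (a), (b) and (c) verified, Theorem~\ref{the:6} applies: $\gamma^*_\infty=\lim_N\gamma^*_N$ pointwise $\mathbb{P}$-a.s.\ and $\gamma^*_\infty$ is a team optimal policy for $(\mathcal{P}_\infty)$. Since $\gamma^*_\infty$ is identically symmetric and globally optimal, it performs at least as well as any policy for $(\mathcal{P}_\infty)$, so by Definition~\ref{Def:ost} the team $(\mathcal{P}_\infty)$ is symmetrically optimal, which finishes the proof. The one step demanding care is $\max_{1\le i\le N}f(v^i)/N\to 0$: it must be derived from the Ces\`aro finiteness of $\{f(v^i)\}$ alone, with no appeal to independence, since the observations here are merely conditionally exchangeable; everything else is a bookkeeping reduction to the already-established Theorems~\ref{the:5}, \ref{the:6} and~\ref{rem:3.3}.
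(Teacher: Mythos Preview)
Your proposal is correct and follows essentially the same approach as the paper: invoke Theorem~\ref{rem:3.3} to obtain symmetric optimal policies for $(\mathcal{P}_N)$, verify condition (b) of Theorem~\ref{the:6} from the bound \eqref{eq:22.4.5}, and then apply Theorem~\ref{the:6}. The only difference is that the paper verifies (b) more directly by bounding $\sup_{1\le i\le N}|\gamma^*_N(v^i)-\gamma^*_\infty(v^i)|^2 \le \sum_{i=1}^N|\gamma^*_N(v^i)-\gamma^*_\infty(v^i)|^2 \le h(N)\cdot N^{-1}\sum_{i=1}^N f(v^i)\to 0$, which sidesteps your detour through $\max_{1\le i\le N}f(v^i)/N\to 0$.
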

\begin{proof}
Following from the result of Theorem \ref{rem:3.3}, one can consider a sequence of $N$-DM teams which are symmetrically optimal that defines ($\mathcal{P}_{N}$) (see \eqref{eq:2.6.6}) and whose limit is identified with $(\mathcal{P}_{\infty})$. Equivalent to (b) in Theorem \ref{the:6}, we can show that $\lim\limits_{N\rightarrow \infty}\sup\limits_{1\leq i\leq N}||\gamma^{*}_{N}(v^{i})-\gamma^{*}_{\infty}(v^{i})||^{2}=0~\mathbb{P}$-almost surely. We have
\begin{flalign*}
\lim\limits_{N\rightarrow \infty}\sup\limits_{1\leq i\leq N}&|\gamma^{*}_{N}(v^{i})-\gamma^{*}_{\infty}(v^{i})|^{2}\\
&\leq \lim\limits_{N\rightarrow \infty}\sum_{i=1}^{N}|\gamma^{*}_{N}(v^{i})-\gamma^{*}_{\infty}(v^{i})|^{2}
\\ &\ \leq\lim\limits_{N\rightarrow \infty}{h(N)}\frac{1}{N}\sum_{i=1}^{N}f(v^{i})=0,
\end{flalign*}
where the last inequality follows from \eqref{eq:22.4.5}. Hence, thanks to Theorem \ref{the:6}, a team optimal policy for ($\mathcal{P}_{\infty}$) is the limit of a sequence of team optimal policies for ($\mathcal{P}_{N}$) (see \eqref{eq:2.6.6}) as $N\to \infty$, and hence a team optimal policy for ($\mathcal{P}_{\infty}$) is symmetrically optimal and the proof is completed.
\end{proof}
\subsection{An existence theorem on globally optimal policies for mean-field team problems}
An implication of our analysis is the following existence result on globally optimal policies for mean-field problems. In Theorem \ref{the:4.5}, we showed that if a pointwise limit as $N \to \infty$ of a sequence of optimal policies for ($\mathcal{P}_{N}$) (see \eqref{eq:2.6.6}) exists, this limit is a globally optimal policy for $(\mathcal{P}_{\infty})$, but under the conditions stated in the following theorem, an existence result also can be established.
In the following, we relax the assumption that there exists a pointwise convergence sequence of optimal policies for ($\mathcal{P}_{N}$) (see \eqref{eq:2.6.6}). For the following theorem, we do not establish the pointwise convergence; but clearly if a sequence of optimal policies for ($\mathcal{P}_{N}$) (see \eqref{eq:2.6.6}) converges pointwise, a global optimal policy exists.  Let $Q_{N}(B):=\frac{1}{N}\sum_{i=1}^{N}\delta_{\zeta_{N}^{i}}(B)$, where $B \in \mathcal{Z}:=\mathbb{U}\times \mathbb{V}$, and $\zeta_{N}^{i}:=(\gamma^{*}_{N}(v^{i}),v^{i})$.
\begin{theorem}\label{the:existence}
Consider ($\mathcal{P}_{\infty}$) (see \eqref{eq:2.5.5}) with the convex cost function in the second and third arguments $\mathbb{P}$-almost surely.  Let the action spaces be convex for each decision maker. Assume further that, without any loss, the optimal control laws can be restricted to those with $\mathbb{E}(\phi_{i}(u^{i}))\leq K$ for some finite $K$, where $\phi_{i}:\mathbb{U}^{i} \to \mathbb{R}_{+}$ is lower semi-continuous. If $v^{i}$s are i.i.d. random variables, then there exists an optimal policy for ($\mathcal{P}_{\infty}$).
\end{theorem}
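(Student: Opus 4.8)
The plan is to run the direct method of the calculus of variations on the mean-field team, using empirical occupation measures on $\mathcal{Z}=\mathbb{U}\times\mathbb{V}$ as the compactness vehicle, in the spirit of the proof of Theorem~\ref{the:4.5} but \emph{without} presupposing that the finite-population optimal policies converge pointwise. Since $v^i$ are i.i.d.\ (hence exchangeable conditioned on $\omega_0$), the action spaces are convex, and the cost is convex in the second and third arguments, Theorem~\ref{rem:3.3} shows that each $(\mathcal{P}_N)$ is symmetrically optimal; one may therefore pick \emph{symmetric} policies $\gamma_N^*$ that are $\epsilon_N$-optimal for $(\mathcal{P}_N)$ with $\epsilon_N\downarrow 0$, and the constraint $\mathbb{E}[\phi(u^i)]\le K$ may be imposed on them without loss. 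Set $\mu_N:=\mathrm{Law}(\gamma_N^*(v^1),v^1)$ on $\mathcal{Z}$ and let $Q_N:=\frac{1}{N}\sum_{i=1}^N\delta_{(\gamma_N^*(v^i),v^i)}$ be the corresponding random empirical measure; throughout, conditioning on $\omega_0$ is understood wherever needed, exactly as in the proof of Theorem~\ref{the:4.5}.

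First I would establish tightness of $\{\mu_N\}_N$: its $\mathbb{V}$-marginal is the fixed law of $v^1$, hence tight, while its $\mathbb{U}$-marginal obeys $\mu_N(\{\phi>M\})\le K/M$ by Markov's inequality, which forces tightness of the action marginals since the sublevel sets $\{\phi\le M\}$ are relatively compact. By Prokhorov's theorem a subsequence $\mu_{N_k}$ converges weakly to some $Q^*$ on $\mathcal{Z}$, whose $\mathbb{V}$-marginal is still $\mathrm{Law}(v^1)$. For each fixed $N$ the pairs $(\gamma_N^*(v^i),v^i)$, $i=1,\dots,N$, are i.i.d.\ with common law $\mu_N$, so $\mathrm{Var}(\int g\,dQ_N)\le\|g\|_\infty^2/N$ for $g\in C_b(\mathcal{Z})$; combining this concentration estimate with $\int g\,d\mu_{N_k}\to\int g\,dQ^*$ and a diagonal argument over a countable convergence-determining family of test functions, one passes to a further subsequence along which $Q_{N_k}\to Q^*$ weakly $\mathbb{P}$-almost surely, exactly as in Step~1 of the proof of Theorem~\ref{the:4.5}. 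Disintegrating $Q^*(du,dv)=\mathrm{Law}(v^1)(dv)\,\Pi(du\,|\,v)$ and using convexity and closedness of the action space together with the moment bound, I define the candidate policy $\gamma_\infty^*(v):=\int_{\mathbb{U}}u\,\Pi(du\,|\,v)$, which is well defined, measurable, integrable, and identically symmetric.

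It then remains to verify the hypothesis of Theorem~\ref{the:5} (whose argument is unaffected by passing from optimal to $\epsilon_N$-optimal $\gamma_N^*$, $\epsilon_N\downarrow 0$), namely that $J(\gamma_\infty^*,\gamma_\infty^*,\dots)<\infty$ and $\limsup_N J_N(\gamma_N^*)\ge J(\gamma_\infty^*,\gamma_\infty^*,\dots)$. Writing $\bar u^*:=\int_{\mathbb{U}}u\,Q^*(du\times\mathbb{V})=\mathbb{E}[\gamma_\infty^*(v^1)]$, the lower-semicontinuity computation of Step~2 of the proof of Theorem~\ref{the:4.5} — Fatou's lemma, the convergence $\int_{\mathbb{U}}u\,Q_{N_k}(du\times\mathbb{V})\to\bar u^*$ $\mathbb{P}$-a.s.\ (here the moment bound upgrades weak convergence of the action marginals to convergence of their barycenters via uniform integrability, as in Theorem~\ref{the:4.9.9}), continuity of $c$, and the Serfozo/Langen-type lower-semicontinuity lemma used there — gives
\[
\limsup_N J_N(\gamma_N^*)\ \ge\ \liminf_k J_{N_k}(\gamma_{N_k}^*)\ \ge\ \mathbb{E}\Big[\mathbb{E}\big[\textstyle\int_{\mathcal{Z}}c(\omega_0,u,\bar u^*)\,Q^*(du,dv)\,\big|\,\omega_0\big]\Big].
\]
Jensen's inequality in the convex second argument of $c$ with respect to $\Pi(\cdot\,|\,v)$ bounds the right-hand side below by $\mathbb{E}[c(\omega_0,\gamma_\infty^*(v^1),\bar u^*)]$; and a strong-law argument — $\frac{1}{N}\sum_{p=1}^N\gamma_\infty^*(v^p)\to\bar u^*$ a.s., continuity of $c$, and a domination bound inherited from the moment hypothesis (in the manner of condition~(c) of Theorem~\ref{the:6}) used to move $\limsup_N$ inside the expectation by reverse Fatou — identifies this last quantity with $J(\gamma_\infty^*,\gamma_\infty^*,\dots)$ (also confirming its finiteness). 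Theorem~\ref{the:5} then yields that $\gamma_\infty^*$ is optimal for $(\mathcal{P}_\infty)$.

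The main obstacle is the middle block: producing the weakly convergent subsequence of occupation measures — tightness being precisely what the $\phi$-moment hypothesis buys — and then pushing weak convergence through the cost functional, which is neither continuous nor supported on a compact action space, in particular upgrading weak convergence of the action marginals to convergence of their barycenters, all \emph{without} the pointwise convergence of $\gamma_N^*$ that was available in Theorems~\ref{the:4.5}--\ref{the:4.7}. The passage from the relaxed limit $\Pi(\cdot\,|\,v)$ to the deterministic policy $\gamma_\infty^*$ is the step where convexity of both the cost and the action space is indispensable.
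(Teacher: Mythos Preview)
Your skeleton --- symmetry from Theorem~\ref{rem:3.3}, tightness of occupation measures from the $\phi$-moment bound, extraction of a weak limit, lower-semicontinuity of the cost along the subsequence, then Theorem~\ref{the:5} --- matches the paper's. The minor technical differences are essentially reshufflings of the same ingredients: you obtain compactness via Prokhorov on $\mu_N=\mathrm{Law}(\gamma_N^*(v^1),v^1)$ plus a variance/diagonal argument for a.s.\ weak convergence of $Q_{N_k}$, whereas the paper argues weak compactness of the space of strategic measures with fixed $\mathbb{V}$-marginal and invokes Tychonoff over the product; and for the lower bound the paper inserts a truncation $\min\{M,c\}$ so the integrand is bounded and continuously convergent (Serfozo, then dominated convergence, then monotone convergence in $M$), while you reuse the unbounded Fatou/Serfozo step of Theorem~\ref{the:4.5}.

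The substantive divergence is at the end. The paper stops once the \emph{relaxed} limit $Q$ is shown to satisfy \eqref{eq:2.14}, declares existence of a (possibly randomized) optimal policy, and explicitly remarks that whether the optimizer can be taken deterministic ``is left open for further study.'' You instead push on: disintegrate $Q^*$, set $\gamma_\infty^*(v)=\int u\,\Pi(du\mid v)$, apply Jensen in the convex second argument to pass from $Q^*$ down to $\gamma_\infty^*$, and then identify $\mathbb{E}[c(\omega_0,\gamma_\infty^*(v^1),\bar u^*)]$ with $J(\gamma_\infty^*,\gamma_\infty^*,\dots)$ via SLLN plus ``reverse Fatou'' under ``a domination bound inherited from the moment hypothesis.'' That last clause is where your argument is incomplete: the hypothesis $\mathbb{E}[\phi(u^i)]\le K$ constrains only the action marginals, and nothing in the theorem statement ties $c$ to $\phi$ so as to furnish the integrable envelope reverse Fatou needs (condition~(c) of Theorem~\ref{the:6} is an \emph{assumption} there, not a consequence of a moment bound). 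Without it you get only the Fatou direction $J(\gamma_\infty^*)\ge\mathbb{E}[c(\omega_0,\gamma_\infty^*(v^1),\bar u^*)]$, which is the wrong inequality for Theorem~\ref{the:5}. So you are claiming something the paper deliberately does not, and the step that carries the extra weight is not supported by the stated hypotheses; if you drop the barycenter reduction and stop at the relaxed $Q^*$, your proof collapses to the paper's.
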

We note that the limit policy is not necessarily deterministic according to the above result; this interesting discussion is left open for further study.
\begin{proof}
We first show that $\{Q_{N}\}_{N}$ is pre-compact in the product space $(\mathbb{V}\times \mathbb{U})$ equipped with the weak convergence topology for each component. Then, we show that an induced policy by the limit $Q$ achieves lower expected cost than $\limsup\limits_{N\rightarrow \infty}J_{N}(\underline\gamma^{*}_{N})$, and we invoke Theorem \ref{the:5} to complete the proof. Action spaces and the cost function are convex and following from the hypothesis that $v^{i}$s are i.i.d. random variables (hence they are exchangeable conditioned on $\omega_{0}$) and the result of Theorem \ref{rem:3.3}, one can consider a sequence of $N$-DM teams which are symmetrically optimal that defines ($\mathcal{P}_{N}$) (see \eqref{eq:2.6.6}) and whose limit is identified with $(\mathcal{P}_{\infty})$. 
\begin{itemize}[wide = 0pt]
\item [\textbf{(Step 1):}] 
In the following, we show that for some subsequence $\{Q_{n}\}_{n \in \mathbb{I}}$ converges weakly to $Q$ $\mathbb{P}$-almost surely, that is, $\mathbb{P}$-a.s., for every continuous and bounded function $g$,
\begin{flalign*}
&\lim\limits_{n\rightarrow \infty}\bigg|\int g dQ_{n}-\int g d{Q}\bigg|=0,
\end{flalign*}
where $n \in \mathbb{I}$ is the index set of a converging subsequence. We use the fact that observations are i.i.d. and the space of control policies is weakly compact (see e.g., \cite[proof of Theorem 4.7]{yuksel2018general}). That is because, we can represent the control policy spaces with the space of all joint measures on $(\mathbb{V}^{i} \times \mathbb{U}^{i})$ for each DM with a fixed marginal on $v^{i}$ \cite{YukselSaldiSICON17,BorkarRealization}. Since the team is static, this decouples the policy spaces from the policies of the previous decision makers, and following from the hypothesis on $\phi_{i}$ and the fact that $\nu \to \int \nu(dx)g(x)$ is lower semi-continuous for a continuous function $g$ \cite[proof of Theorem 4.7]{yuksel2018general}, the marginals on $\mathbb{U}^{i}$ will be weakly compact. If the marginals are weakly compact, then the collection of all measures with these weakly compact marginals are also weakly compact (see e.g., \cite[Proof of Theorem 2.4]{yukselSICON2017}) and hence the control policy space is weakly compact. Using Tychonoff's theorem, the countably infinite product space is also compact under the product topology which implies compactness of the space of control policies under the product topology. Hence, there exists a subsequence $\{Q_{n}\}_{n \in \mathbb{I}}$ converges weakly to $Q$ $\mathbb{P}$-almost surely.
\item [\textbf{(Step 2):}] 
Now, we show that \eqref{eq:2.14} holds. We have
\begin{flalign}
&\scalemath{0.95}{\mathbb{E}\bigg[\mathbb{E}\bigg[\int_{\cal{Z}}c\left(\omega_{0}, u,\int_{\mathbb U}uQ(du \times \mathbb{V})\right)Q(du,dv)\bigg|\omega_{0}\bigg]\bigg]}\nonumber\\
&\scalemath{0.95}{=  \lim\limits_{M\rightarrow \infty}\mathbb{E}\bigg[\mathbb{E}\bigg[\int_{\cal{Z}}\min\bigg\{M,c\left(\omega_{0}, u,\int_{\mathbb U}uQ(du \times \mathbb{V})\right)\bigg\}}\nonumber\\&~~~~~\scalemath{0.95}{\times Q(du,dv)\bigg|\omega_{0}\bigg]\bigg]}\label{eq:37.5.1}\\
&\scalemath{0.95}{=  \lim\limits_{M\rightarrow \infty}\mathbb{E}\bigg[\mathbb{E}\bigg[\lim\limits_{n\rightarrow \infty}\int_{\cal{Z}}\min\bigg\{M,c\left(\omega_{0}, u,\int_{\mathbb U}uQ_{n}(du \times \mathbb{V})\right)\bigg\}}\nonumber\\&~~~~~\scalemath{0.95}{\times Q_{n}(du,dv)\bigg|\omega_{0}\bigg]\bigg]}\label{eq:33.5.1}\\
&\scalemath{0.95}{=  \lim\limits_{M\rightarrow \infty}\lim\limits_{n\rightarrow \infty}\mathbb{E}\bigg[\mathbb{E}\bigg[\int_{\cal{Z}}\min\bigg\{M,c\left(\omega_{0}, u,\int_{\mathbb U}uQ_{n}(du \times \mathbb{V})\right)\bigg\}}\nonumber\\&~~~~~\scalemath{0.95}{\times Q_{n}(du,dv)\bigg|\omega_{0}\bigg]\bigg]}\label{eq:34.5.1}\\
&\scalemath{0.92}{ \leq  \lim\limits_{M\rightarrow \infty}\limsup\limits_{N\rightarrow \infty}\mathbb{E}\bigg[\mathbb{E}\bigg[\int_{\cal{Z}} \min\bigg\{M,c\left(\omega_{0}, u,\int_{\mathbb U}uQ_{N}(du \times \mathbb{V})\right)\bigg\}}\nonumber\\&~~~~~\scalemath{0.95}{\times Q_{N}(du,dv)\bigg|\omega_{0}\bigg]\bigg]}\label{eq:35.5.1}\\
&\scalemath{0.95}{\leq \limsup\limits_{N\rightarrow \infty}\frac{1}{N}\sum_{i=1}^{N}\mathbb{E}\left[c\left(\omega_{0}, \gamma^{*}_{N}(v^{i}),\frac{1}{N}\sum_{i=1}^{N}\gamma^{*}_{N}(v^{i})\right)\right]}\label{eq:36.5.1},
\end{flalign}
where \eqref{eq:37.5.1} follows from the monotone convergence theorem. Since $\{Q_{n}\}_{n \in I}$ converges weakly to $Q$ $\mathbb{P}$-almost surely, we have by continuous mapping theorem (by considering a projection to the first component) $\int_{\mathbb U}uQ_{n}(du \times \mathbb{V}) \to \int_{\mathbb U}uQ(du \times \mathbb{V})$ $\mathbb{P}$-almost surely. Following from (Step 1), \eqref{eq:33.5.1} follows from \cite[Theorem 3.5]{serfozo1982convergence}. That is because, the cost function is continuous in actions, and $\min\{M,c\left(\omega_{0}, u,\int_{\mathbb U}uQ_{n}(du \times \mathbb{V})\right)\}$ is continuously converges in $u$, $\min\{M,c(\omega_{0}, u_{n},\int_{\mathbb U}uQ_{n}(du \times \mathbb{V}))\} \to \min\{M,c(\omega_{0}, u,\int_{\mathbb U}uQ(du \times \mathbb{V}))\}$ where $u_{n} \to u$ as $n \to \infty$. Equality \eqref{eq:34.5.1} follows from the dominated convergence theorem since $\min\{M,c\left(\omega_{0}, u,\int_{\mathbb U}uQ_{N}(du \times \mathbb{V})\right)\}$ is bounded, and \eqref{eq:35.5.1} is true since limsup is the greatest convergent subsequence limit for a bounded sequence. Finally, \eqref{eq:36.5.1} follows from the definition of empirical measures and since for every $M$, $\min\{M,c\left(\omega_{0}, u,\int_{\mathbb U}uQ_{N}(du \times \mathbb{V})\right)\} \leq c\left(\omega_{0}, u,\int_{\mathbb U}uQ_{N}(du \times \mathbb{V})\right)$; hence, following from Theorem \ref{the:5}, the randomized limit policy through subsequence is a globally optimal for ($\mathcal{P}_{\infty}$). 
\end{itemize}
\end{proof}

We apply the results of this section in Section \ref{Ex:3}.

\section{Examples}\label{sec:5}
In the following, we present a number of examples to demonstrate results in previous sections. First, we consider LQG and LQ static teams with coupling between states, then we consider LQG symmetric static teams with coupling between control actions. Moreover, we investigate dynamic infinite-horizon average cost LQG teams with the classical information structure.


\subsection{ Example 1, Static quadratic Gaussian teams with coupling between states}\label{Ex:1}
Consider the following observation scheme,
\begin{equation}\label{eq:2.31}
v^{i}=x^{i}+z^{i},
\end{equation}
where $\{z^{i}\}_{i \in \mathbb{N}}$ and $\{x^{i}\}_{i \in \mathbb{N}}$ are i.i.d. zero mean Gaussian random variables. Let $\{z^{i}\}_{i \in \mathbb{N}}$ be independent of $\{x^{i}\}_{i \in \mathbb{N}}$. The expected cost function is defined as
\begin{equation}\label{eq:2.32}
J(\underline\gamma)=\limsup\limits_{N\rightarrow \infty} \frac{1}{N} \mathbb{E}^{\underline{\gamma}}\bigg[\sum_{i=1}^{N}R(u^{i})^{2}+Q(u^{i}-x^{i}-\mu_{N})^{2}\bigg],
\end{equation}
where $\mu_{N}:=\frac{1}{N}\sum_{k=1}^{N}x^{k}$. Let $R$ be a positive number and $Q$ be a non-negative number.
\begin{theorem}\label{the:7}
For LQG static teams as formulated above, under the measurement scheme \eqref{eq:2.31},\/ $\gamma^{i*}_{\infty}(v^{i})$\@ is globally optimal for ($\mathcal{P}_{\infty}$) achieved as the limit $N \to \infty$ of\/ $\gamma^{i*}_{N}(v^{i})$\@, an optimal solution for ($\mathcal{P}_{N}$).
\end{theorem}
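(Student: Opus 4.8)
The plan is to go through the constructive route of Theorem~\ref{the:6} (with the relaxations of Remark~\ref{rem:3}): first solve $(\mathcal{P}_{N})$ in closed form, then check that the $N$-DM optimizers converge, uniformly in the decision-maker index, to an affine limit policy, and finally verify a uniform integrability bound so that Theorem~\ref{the:6} (hence Theorem~\ref{the:5}) applies. The feature that makes this example tractable is that the coupling term $\mu_{N}=\frac1N\sum_{k=1}^{N}x^{k}$ is \emph{exogenous}: in $\frac1N\sum_{i=1}^{N}\big(R(u^{i})^{2}+Q(u^{i}-x^{i}-\mu_{N})^{2}\big)$ the action $u^{i}$ enters only the $i$-th summand, so $(\mathcal{P}_{N})$ decouples across decision makers. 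Since $R>0$ the per-DM cost is strictly convex in $u^{i}$, so by Theorem~\ref{the:2} the team-optimal policy exists, is $\mathbb{P}$-a.s.\ unique, and equals the stationary (equivalently person-by-person-optimal) solution; because every DM faces an identical decision problem, this solution is the same affine map for each $i$ (in line with Theorem~\ref{rem:3.3}).

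\textbf{Closed form for $(\mathcal{P}_{N})$.} Stationarity in $u^{i}$ gives $\gamma^{i*}_{N}(v^{i})=\frac{Q}{R+Q}\,\mathbb{E}[\,x^{i}+\mu_{N}\mid v^{i}\,]$. Writing $\sigma_{x}^{2}=\mathrm{Var}(x^{i})$, $\sigma_{z}^{2}=\mathrm{Var}(z^{i})$ and $\beta=\sigma_{x}^{2}/(\sigma_{x}^{2}+\sigma_{z}^{2})$, the Gaussian conditioning formula gives $\mathbb{E}[x^{i}\mid v^{i}]=\beta v^{i}$, and $\mathbb{E}[\mu_{N}\mid v^{i}]=\frac1N\mathbb{E}[x^{i}\mid v^{i}]$ because $x^{k}\perp v^{i}$ for $k\neq i$. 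Hence
\[
\gamma^{i*}_{N}(v^{i})=\tfrac{Q}{R+Q}\Big(1+\tfrac1N\Big)\beta\,v^{i},\qquad \gamma^{i*}_{\infty}(v^{i}):=\lim_{N\to\infty}\gamma^{i*}_{N}(v^{i})=\tfrac{Q}{R+Q}\,\beta\,v^{i},
\]
the convergence being pointwise for every $i$.

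\textbf{Checking the hypotheses of Theorem~\ref{the:6}.} Condition (a) holds by the above. For (b), one has $\sup_{1\le i\le N}\big|\gamma^{i*}_{N}(v^{i})-\gamma^{i*}_{\infty}(v^{i})\big|=\tfrac{Q}{R+Q}\beta\,\tfrac1N\sup_{1\le i\le N}|v^{i}|$; a Gaussian tail bound together with the Borel--Cantelli lemma gives $\frac1N\sup_{1\le i\le N}|v^{i}|\to 0$ $\mathbb{P}$-a.s., so (b) is satisfied. For (c) I would use part (ii) of Remark~\ref{rem:3}: with $X_{N}:=\frac1N\sum_{i=1}^{N}\big(R\gamma^{i*}_{\infty}(v^{i})^{2}+Q(\gamma^{i*}_{\infty}(v^{i})-x^{i}-\mu_{N})^{2}\big)$, each summand is a quadratic form in the jointly Gaussian triple $(v^{i},x^{i},\mu_{N})$ whose covariance is bounded uniformly in $N$ (note $\mathrm{Var}(\mu_{N})=\sigma_{x}^{2}/N\le\sigma_{x}^{2}$); Jensen's inequality then bounds $\mathbb{E}|X_{N}|^{1+\epsilon}$ by the average of the $(1+\epsilon)$-moments of those quadratic forms, and polynomial moments of nondegenerate Gaussians with bounded covariance are bounded, so $\sup_{N}\mathbb{E}|X_{N}|^{1+\epsilon}<\infty$ for any $\epsilon>0$. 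Theorem~\ref{the:6} (via Theorem~\ref{the:5}) then gives that $\gamma^{i*}_{\infty}$ is globally team-optimal for $(\mathcal{P}_{\infty})$ and is the pointwise limit of the $(\mathcal{P}_{N})$-optimizers, which is exactly the claim.

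\textbf{Expected difficulty.} The only non-routine step is (b): one must exploit that $\max_{1\le i\le N}|v^{i}|$ grows only like $\sqrt{\log N}$ $\mathbb{P}$-a.s., so that the $1/N$ prefactor forces uniform-in-$i$ convergence. The closed-form solution, the Gaussian-moment bound behind the uniform integrability, and the appeal to Theorem~\ref{the:6} are all mechanical. As an alternative one could instead verify inequality \eqref{eq:2.2} of Theorem~\ref{the:4} directly for $\gamma^{*}_{\infty}$, but the route above is shorter.
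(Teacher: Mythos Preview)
Your proposal is correct and follows essentially the same route as the paper: solve $(\mathcal{P}_N)$ explicitly via stationarity to get $\gamma^{i*}_{N}(v^i)=\frac{Q}{R+Q}(1+\frac1N)\mathbb{E}[x^i\mid v^i]$, identify the limit policy, and then verify hypotheses (a), (b), (c) of Theorem~\ref{the:6} (the latter through Remark~\ref{rem:3}(ii)).

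The only notable difference is in how (b) is handled. You invoke Gaussian tail bounds and Borel--Cantelli to control $\frac1N\max_{1\le i\le N}|v^i|$. The paper instead uses the elementary inequality $\sup_{1\le i\le N}\frac{(v^{i})^{2}}{N^{2}}\le \frac{1}{N}\cdot\frac{1}{N}\sum_{i=1}^{N}(v^{i})^{2}$ and then the SLLN (finite second moment suffices) to conclude the right-hand side tends to $0$ almost surely. This argument is shorter, does not use Gaussianity, and is exactly what is reused verbatim in the non-Gaussian Example~2; so what you flagged as the ``only non-routine step'' is in fact entirely routine once you see this bound.
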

\begin{proof}
We invoke Theorem \ref{the:6} to prove the theorem. The stationary policy (see Definition \ref{def:sta}) is obtained as
\begin{flalign*}
\gamma^{i*}_{N}&=(R+Q)^{-1}Q(1+\frac{1}{N})\mathbb{E}(x^{i}|v^{i}),
\end{flalign*}
where the equality follows from the assumption that $x^{i}$s are independent of $z^{i}$s and $x^{k}$s, $k\not=i$ for every $i=1,2,...,N$ and the assumption that random variables are mean zero. Following from \cite{KraMar82}, stationary policies are team optimal for ($\mathcal{P}_{N}$) in this formulation. We have $\gamma^{i*}_{\infty}(v^{i})=(R+Q)^{-1}Q\mathbb{E}(x^{i}|v^{i})$.
Since $v^{i}$s are zero mean Gaussian random variables, we have $\mathbb{E}(x^{i}|v^{i})=\Sigma_{x^{i}v^{i}}\Sigma_{v^{i}v^{i}}^{-1}v^{i}:=Kv^{i}$, where $\Sigma_{XY}$ is defined as a covariance of two random variables $X$ and $Y$.
\ We have $\mathbb{P}$-a.s.,
\begin{flalign}\label{eq:2.42}
\sup\limits_{1 \leq i\leq N}|\gamma^{i*}_{N}(v^{i})-\gamma^{i*}_{\infty}(v^{i})|
&=\frac{Q}{R+Q}\sup\limits_{1 \leq i\leq N}|\frac{1}{N}\mathbb{E}(x^{i}|v^{i})|\nonumber\\&=\frac{KQ}{R+Q}\sup\limits_{1 \leq i\leq N}|\frac{1}{N}v^{i}| \xrightarrow[N \rightarrow \infty]{}0,
\end{flalign}
where \eqref{eq:2.42} follows from
\begin{flalign*}
&\lim\limits_{N\rightarrow \infty}\sup\limits_{1\leq i \leq N}\frac{1}{N^{2}}(v^{i})^{2}\leq \lim\limits_{N\rightarrow \infty}\frac{1}{N^{2}}\sum_{i=1}^{N}(v^{i})^{2}=0 ~\mathbb{P}-a.s,
\end{flalign*}
where the first inequality is true since $(v^{i})^{2}$s are non-negative, and equality follows from the strong law of large numbers (SLLN) since $v^{i}$s are i.i.d. and have a finite variance, hence, (b) holds. One can show that the condition in Remark \ref{rem:3}(ii) holds since $v^{i}$s and $x^{i}$s are i.i.d. random variables, hence Theorem \ref{the:6} completes the proof.
\end{proof}
\subsection{ Example 2, Static non-Gaussian teams with coupling between states}\label{Ex:2}
Let the observation scheme be \eqref{eq:2.31}, where $\{z^{i}\}_{i \in \mathbb{N}}$ and $\{x^{i}\}_{i \in \mathbb{N}}$ are i.i.d. zero mean random variables with finite variance. Let $\{z^{i}\}_{i \in \mathbb{N}}$ be independent of $\{x^{i}\}_{i \in \mathbb{N}}$.
The expected cost function is defined as \eqref{eq:2.32}.
Let $R$ be a positive number and $Q$ be a non-negative number.
\begin{theorem}\label{the:8}
For LQ static teams as formulated above, under the measurement scheme \eqref{eq:2.31},\/ $\gamma^{k*}_{\infty}(v^{k})=(R+Q)^{-1}Q\mathbb{E}(x^{k}|v^{k})$\@ is globally optimal for ($\mathcal{P}_{\infty}$) and is obtained as the limit of $\gamma^{k*}_{N}(v^{k})$ as $N \to \infty$.
\end{theorem}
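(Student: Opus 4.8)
The plan is to invoke Theorem \ref{the:6}, following the proof of Theorem \ref{the:7} but replacing the (linear) conditional expectation available in the Gaussian case by a general measurable regression function. Since the per-stage cost $R(u^i)^2+Q(u^i-x^i-\mu_N)^2$ is a strictly convex, continuously differentiable quadratic in $\underline u_N$ and all primitive variables are square-integrable, a stationary policy (Definition \ref{def:sta}) is team optimal for $(\mathcal{P}_N)$ by \cite{KraMar82} (the Hilbert-space form of Theorem \ref{the:3}, whose hypothesis $\mathbb{E}[c_{u^i}(\omega_0,\underline\gamma^*_N)\,|\,v^i]\in\Gamma^i$ holds because all admissible policies lie in $L^2$). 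Setting $\nabla_{u^i}\mathbb{E}[R(u^i)^2+Q(u^i-x^i-\mu_N)^2\mid v^i]=0$ and using that $x^k$ for $k\neq i$ is independent of $v^i$ and mean zero, the stationary policy is $\gamma^{i*}_N(v^i)=(R+Q)^{-1}Q(1+\tfrac1N)\,\mathbb{E}(x^i\mid v^i)$. Because the pairs $(x^i,v^i)$ are identically distributed, $\mathbb{E}(x^i\mid v^i)=\psi(v^i)$ for a fixed measurable $\psi$ (not linear in general, in contrast to \eqref{eq:2.42}), and the candidate limit is $\gamma^{k*}_\infty(v^k)=(R+Q)^{-1}Q\,\psi(v^k)$, which is identically symmetric; the pointwise convergence $\gamma^{i*}_N(v^i)\to\gamma^{i*}_\infty(v^i)$ is immediate from the explicit formula.

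Next I would verify condition (b) of Theorem \ref{the:6}. Exactly as in \eqref{eq:2.42},
\[
\sup_{1\le i\le N}\big|\gamma^{i*}_N(v^i)-\gamma^{i*}_\infty(v^i)\big|=\frac{Q}{(R+Q)N}\,\sup_{1\le i\le N}\big|\psi(v^i)\big| ,
\]
and the conditional Jensen inequality gives $\mathbb{E}[\psi(v^1)^2]=\mathbb{E}\big[\mathbb{E}(x^1\mid v^1)^2\big]\le\mathbb{E}[(x^1)^2]<\infty$, so $\tfrac1N\sum_{i=1}^N\psi(v^i)^2\to\mathbb{E}[\psi(v^1)^2]$ $\mathbb{P}$-a.s.\ by the SLLN ($v^i$ i.i.d.). Hence $\tfrac1{N^2}\sup_{1\le i\le N}\psi(v^i)^2\le\tfrac1{N^2}\sum_{i=1}^N\psi(v^i)^2\to0$ $\mathbb{P}$-a.s., which is precisely (b).

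The remaining, and genuinely new, point relative to Theorem \ref{the:7} is checking condition (c): here I cannot simply quote the power-$(1+\epsilon)$ criterion of Remark \ref{rem:3}(ii), since the primitives are assumed only square-integrable, so I would establish the underlying uniform integrability by domination. Writing $X_N:=\tfrac1N\sum_{i=1}^N\big[R\,\gamma^{i*}_\infty(v^i)^2+Q(\gamma^{i*}_\infty(v^i)-x^i-\mu_N)^2\big]$, the inequality $(a-b-c)^2\le 3(a^2+b^2+c^2)$ together with $\mu_N^2=\big(\tfrac1N\sum_k x^k\big)^2\le\tfrac1N\sum_k (x^k)^2$ gives $0\le X_N\le Y_N:=\tfrac1N\sum_{i=1}^N\big[(R+3Q)\gamma^{i*}_\infty(v^i)^2+6Q(x^i)^2\big]$, and $\{Y_N\}$ is a sequence of sample means of i.i.d.\ nonnegative integrable random variables (integrability from $\mathbb{E}[\psi(v^1)^2]<\infty$ and $\mathbb{E}[(x^1)^2]<\infty$), hence converges $\mathbb{P}$-a.s.\ and in $L^1$ and is therefore uniformly integrable; so is the dominated family $\{X_N\}$. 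Combining this with $X_N\to X:=\mathbb{E}\big[R\,\gamma^{1*}_\infty(v^1)^2+Q(\gamma^{1*}_\infty(v^1)-x^1)^2\big]$ $\mathbb{P}$-a.s.\ (which follows from the SLLN and $\mathbb{E}[\mu_N^2]=\mathrm{Var}(x^1)/N\to0$), \cite[Theorem~3.5]{Billingsley} yields $\mathbb{E}[X_N]\to\mathbb{E}[X]$, so Theorem \ref{the:6} applies and $\gamma^{k*}_\infty$ is globally optimal for $(\mathcal{P}_\infty)$.

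I expect this last step to be the main obstacle: under only a second-moment hypothesis the empirical cost average has merely an $L^1$ control, so its uniform integrability has to be obtained by dominating it against a genuine i.i.d.\ sample mean and invoking the $L^1$ strong law, rather than through the cleaner power-$(1+\epsilon)$ bound used in the Gaussian case; the stationarity computation and the verification of (b) are then routine adaptations of the proof of Theorem \ref{the:7}.
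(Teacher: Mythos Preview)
Your proof is correct and follows one of the two routes the paper itself presents. The paper's proof explicitly says it uses \emph{both} Theorem~\ref{the:4} and Theorem~\ref{the:6}: it first verifies (A1), (A2) and the variational inequality \eqref{eq:2.2} directly (this is the longer computation, culminating in the Cauchy--Schwarz bound and the observation that $\tfrac{1}{N^2}\sum_k \mathbb{E}[(\gamma^k_\infty(v^k))^2]\to0$), and then sketches the Theorem~\ref{the:6} route as an alternative, verifying (b) exactly as you do and dismissing (c) with a one-line appeal to Remark~\ref{rem:3}(ii). You pursue only the second route, but you are more careful on (c): you correctly observe that under a bare second-moment hypothesis the $(1+\epsilon)$-power criterion of Remark~\ref{rem:3}(ii) is not available, and you replace it by a clean domination $0\le X_N\le Y_N$ with $Y_N$ an i.i.d.\ sample mean of integrable variables, whence uniform integrability follows from the $L^1$ strong law. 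This is a genuine improvement over the paper's treatment of that step. What you give up by skipping the Theorem~\ref{the:4} route is the illustration that the variational condition \eqref{eq:2.2} can be checked directly in this example---which is the paper's stated motivation for Section~\ref{Ex:2} (``We will use the following theorem for LQ static teams'' in the lead-up to Theorem~\ref{the:4})---but for the purpose of proving Theorem~\ref{the:8} itself your argument is complete.
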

\begin{proof}
In the following, we use both Theorem \ref{the:4} and Theorem \ref{the:6}. Clearly, (A1) holds, we show that (A2) holds,
\begin{flalign}
&\limsup\limits_{N\rightarrow \infty}\frac{1}{N}\mathbb{E}\bigg[\sum_{i=1}^{N}(\gamma^{i*}_{\infty}(v^{i}))^{2}R+Q(\gamma^{i*}_{\infty}(v^{i})-x^{i}-\mu_{N})^{2}\bigg]\nonumber\\
&=\limsup\limits_{N\rightarrow \infty}\frac{1}{N}\mathbb{E}\bigg[\sum_{i=1}^{N}\frac{-Q^{2}}{Q+R}\mathbb{E}^{2}(x^{i}|v^{i})(1+\frac{2}{N})(x^{i}+\mu_{N})^{2}Q\bigg]\label{eq:2.48}\\
&\leq \limsup\limits_{N\rightarrow \infty}\frac{1}{N}\mathbb{E}\bigg[\sum_{i=1}^{N}\frac{-Q^{2}}{Q+R}\mathbb{E}^{2}(x^{i}|v^{i})\bigg]+\lim\limits_{N\rightarrow \infty}\frac{Q(N+3)\sigma^2}{N}\label{eq:2.51}\\
&=\frac{-Q^{2}}{Q+R}\mathbb{E}\bigg[\mathbb{E}^{2}(x^{1}|v^{1})\bigg]+Q\sigma^{2}\label{eq:2.55},
\end{flalign}
where \eqref{eq:2.48} follows from $\mathbb{E}\left(\mathbb{E}(x^{i}|v^{i})(x^{i}+\mu_{N})\right)=\mathbb{E}\left(\mathbb{E}\left(\mathbb{E}(x^{i}|v^{i})(x^{i}+\mu_{N})|v^{i}\right)\right)=(1+\frac{1}{N})\mathbb{E}\left(\mathbb{E}^{2}(x^{i}|v^{i})\right)$, and \eqref{eq:2.51} is true since $x^{i}$ and $z^{i}$ are i.i.d. random variables and $\limsup\limits_{N\rightarrow \infty}a_{N}+\limsup\limits_{N\rightarrow \infty}b_{N} \geq \limsup\limits_{N\rightarrow \infty} (a_{N}+b_{N})$. We can justify \eqref{eq:2.55} by defining $Y^{i}:=(\mathbb{E}(x^{i}|v^{i}))^{2}$, and since $Y^{i}$s are measurable functions of $\{v^{i}\}_{i \geq 1}$, and $v^{i}$s and $x^{i}$s are i.i.d., $Y^{i}$s are i.i.d. random variables. 
Similarly, one can show the other side direction for liminf.
Hence (A2) is satisfied. Now, we check \eqref{eq:2.2}, for every $\gamma^{k}_{\infty}$ with $J(\underline{\gamma}_{\infty})<\infty$,
\begin{flalign}
&\limsup\limits_{N\rightarrow \infty} \frac{1}{N}\mathbb{E}\bigg(\sum_{i=1}^{N}\sum_{k=1}^{N}c_{u^{k}}(\omega_{0},\gamma^{i*},\mu^{*})(m_{k})\bigg)\nonumber\\
&=\limsup\limits_{N\rightarrow \infty}\frac{2Q}{N}\sum_{k=1}^{N}\mathbb{E}\left(\mathbb{E}(x^{k}(m_{k})|v^{k})\right)-\mathbb{E}\left((x^{k}+\mu_{N})(m_{k})\right)\label{eq:2.58}\\
&=\limsup\limits_{N\rightarrow \infty}\frac{-2Q}{N}\sum_{k=1}^{N}\mathbb{E}\left(\mu_{N}\left(m_{k}\right)\right)\label{eq:2.4.5.8}\\
&=2Q\limsup\limits_{N\rightarrow \infty}\frac{1}{N^{2}}\sum_{k=1}^{N}\mathbb{E}\left[x^{k}\gamma^{k*}_{\infty}(v^{k})\right]-\mathbb{E}\left[x^{k}\gamma^{k}_{\infty}(v^{k})\right]\label{eq:2.4.5.9}\\
&\ =-2Q\liminf\limits_{N\rightarrow \infty}\frac{1}{N^{2}}\sum_{k=1}^{N}\mathbb{E}\left(x^{k}\gamma^{k}_{\infty}(v^{k})\right)\label{eq:2.63}\\
&\geq-2Q\sigma\liminf\limits_{N\rightarrow \infty}\frac{1}{N^{2}}\sum_{k=1}^{N}\sqrt{\mathbb{E}\left[(\gamma^{k}_{\infty}(v^{k}))^{2}\right]}\label{eq:2.64}\\
&\geq-2Q\sigma\liminf\limits_{N\rightarrow \infty}\sup\limits_{1 \leq k \leq N}\sqrt{\frac{\mathbb{E}\left[(\gamma^{k}_{\infty}(v^{k}))^{2}\right]}{N^{2}}}=0\label{eq:2.66},
\end{flalign}
where measurability of $m_{k}:=\gamma^{k}_{\infty}(v^{k})-\gamma^{k*}_{\infty}(v^{k})$ with respect to the $\sigma$-field generated by $v^{k}$ implies \eqref{eq:2.58}, and \eqref{eq:2.4.5.8} follows from the iterated expectations property. Since $x^{p}$s are mean zero and independent of $v^{k}$ for $k\not=p$, we have \eqref{eq:2.4.5.9}, and \eqref{eq:2.63} follows from the fact that $\gamma^{k*}_{\infty}$ is independent of $k$, and since $v^{k}$ and $x^{k}$ are i.i.d. random variables. Moreover, $J(\underline{\gamma}_{\infty})<\infty$, so $\mathbb{E}(\gamma^{k}_{\infty}(v^{k}))\leq \mathbb{E}\left((\gamma^{k}_{\infty}(v^{k}))^{2}\right)<\infty$, and Cauchy\textendash Schwarz inequality implies \eqref{eq:2.64}, and \eqref{eq:2.66} follows from
\begin{flalign}
&\liminf\limits_{N\rightarrow \infty}\sup\limits_{1 \leq k \leq N}{\frac{\mathbb{E}\left[(\gamma^{k}_{\infty}(v^{k}))^{2}\right]}{N^{2}}}\nonumber\\
&\leq \liminf\limits_{N\rightarrow \infty}\frac{1}{N^{2}}\sum_{k=1}^{N}{\mathbb{E}\left[(\gamma^{k}_{\infty}(v^{k}))^{2}\right]}=0 \label{eq:2.68},
\end{flalign}
where \eqref{eq:2.68} is true since $\mathbb{E}\left[(\gamma^{k}_{\infty}(v^{k}))^{2}\right]\geq 0$ and $\limsup\limits_{N\rightarrow \infty}\frac{1}{N}\mathbb{E}\left(\sum_{k=1}^{N}\left(\gamma^{k}_{\infty}(v^{k})\right)^{2}R\right) \leq J(\underline{\gamma}_{\infty})<\infty$. Thus, \eqref{eq:2.2} is satisfied and Theorem \ref{the:4} completes the proof.
\\One can also invoke Theorem \ref{the:6} to complete the proof.  One can show that the condition in Remark \ref{rem:3}(ii) holds since $v^{i}$s and $x^{i}$s are i.i.d. random variables. We only justify (b). Stationary policy is team optimal for ($\mathcal{P}_{N}$) in this formulation \cite{KraMar82}, hence $\gamma^{i*}_{N}(v^{i})=(R+Q)^{-1}Q(1+\frac{1}{N})\mathbb{E}(x^{i}|v^{i})$, so we need to show that
\begin{flalign*}
\lim\limits_{N\rightarrow \infty}&\sup\limits_{1\leq i \leq N}\left|\gamma^{i*}_{N}(v^{i})-\gamma^{i*}_{\infty}(v^{i})\right|=0~\mathbb{P}-a.s,
\end{flalign*}
Equivalently, we can show that $\mathbb{P}$-a.s
\begin{flalign*}
&\scalemath{0.95}{\lim\limits_{N\rightarrow \infty}\sup\limits_{1\leq i \leq N}\frac{1}{N^{2}}\left(\mathbb{E}(x^{i}|v^{i})\right)^{2}\leq \lim\limits_{N\rightarrow \infty}\frac{1}{N^{2}}\sum_{i=1}^{N}\left(\mathbb{E}(x^{i}|v^{i})\right)^{2}=0}, \label{eq:2.69}
\end{flalign*}
where the first inequality is true since $\left(\mathbb{E}(x^{i}|v^{i})\right)^{2}$s are non-negative, and equality follows from SLLN since
\begin{flalign*}
\mathbb{E}\left((\mathbb{E}(x^{i}|v^{i}))^{2}\right)& = \mathbb{E}\left((x^{i})^{2}\right)-\mathbb{E}\left((x^{i}-\mathbb{E}(x^{i}|v^{i}))^{2}\right)<\infty,
\end{flalign*}
and $(\mathbb{E}\left(x^{i}|v^{i})\right)^{2}$ are i.i.d. sequence of random variables since $v^{i}$s are i.i.d. random variables and the proof is completed.
\end{proof}
\subsection{Example 3, LQG symmetric teams with coupling between control actions}\label{Ex:3}
Let
\begin{equation}
\underline{v}^{i}=H^{i}\underline{x}+\underline{z}^{i},\label{eq:1.8}
\end{equation}
where $\{\underline{z}^{i}\}_{{i} \in \mathbb{N}}$ is independent zero mean Gaussian random vectors also independent of $\underline{x}$, with covariance $\Sigma_{jj}=N^{0}>0$. Define $\omega=(\underline{x},\underline{z}^{1},\underline{z}^{2},\dots)$, and $\omega_{0}:=\underline{x}$ where $\underline{x}$ is a Gaussian random vector with covariance $\mathbb{E}(\underline{x}\underline{x}^{T})=\Sigma_{00}$.
Let
\begin{flalign}
&J(\underline\gamma)=\limsup\limits_{N\rightarrow \infty} \frac{1}{N} \mathbb{E}^{\underline{\gamma}}\bigg[\sum_{i=1}^{N}(u^{i})^{T}Ru^{i}-2\sum_{i=1}^{N}(u^{i})^{T}D\nonumber\\&\times(\underline{x}+\frac{1}{N}\sum_{k=1}^{N}u^{k})+(\underline{x}+\frac{1}{N}\sum_{k=1}^{N}u^{k})^{T}Q(\underline{x}+\frac{1}{N}\sum_{k=1}^{N}u^{k})\bigg],\label{eq:1.9}
\end{flalign}
where $R$ is an appropriate dimension positive definite matrix and $D$, and $Q$ are appropriate dimension positive semi-definite matrices, and $R>2D$.
In the following, we follow steps in \cite[Theorem 2.6.8]{YukselBasarBook} to obtain optimal policies for ($\mathcal{P}_{N}$).
\begin{lemma}
 Consider an $N$-DM LQG team formulated above,
under the measurement scheme \eqref{eq:1.8}, the global optimal policy  for ($\mathcal{P}_{N}$) is linear, i.e., $\gamma^{k*}_{N}(v^{k})=\pi^{k}_{N}v^{k}$. Here, $\pi^{k}_{N} \in \mathcal{M}_{n,m}(\mathbb{R})$, $n \times m$ real-valued matrix, is obtained by solving the following parallel update scheme,
\begin{flalign}
&\pi^{k}_{N,(i)}=-L_{N}\bigg[S^{k}+\frac{1}{N}\sum_{p=1,p\not=k}^{N}\pi^{p}_{N,(i)}{H^{p}S^{k}}\bigg],\label{eq:1.12}
\end{flalign}
where $L_{N}:=(R+\frac{Q}{N^{2}}-\frac{2D}{N})^{-1}(\frac{Q}{N}-D)$, $S^{k}:={\Sigma_{00}(H^{k})^{T}}({H^{k}\Sigma_{00}(H^{k})^{T}+\Sigma_{kk}})^{-1}$ and the initial points of the iterations are considered as zero functions.
\end{lemma}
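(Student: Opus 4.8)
\emph{Proof approach.} Problem \eqref{eq:1.8}--\eqref{eq:1.9} is a linear--quadratic static team, so the plan is: (i) verify that the cost is strictly convex in $\underline u_N$ so that, by Krainak et al.\ (Theorem~\ref{the:2} and Theorem~\ref{the:3}), a stationary policy lying in the relevant Hilbert space is automatically globally team optimal for ($\mathcal P_N$); (ii) write the first--order stationarity conditions; (iii) exploit joint Gaussianity to conclude that these conditions are solved by \emph{linear} policies and to reduce them to a finite coupled system of matrix equations; and (iv) identify \eqref{eq:1.12} with the (convergent) Jacobi iteration for that system. This follows the pattern of \cite[Theorem~2.6.8]{YukselBasarBook}, now with mean-field coupling of the controls. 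For (i): expanding \eqref{eq:1.9}, the integrand is a quadratic form in $\underline u_N$; writing $u^i=\mu_N+w^i$ with $\sum_i w^i=0$, its purely quadratic part is $N\mu_N^{T}(R-2D)\mu_N+\mu_N^{T}Q\mu_N+\sum_i (w^i)^{T}Rw^i$, which under $R\succ 2D$, $R\succ 0$ and $Q\succeq 0$ is positive definite. Hence $c(\omega_0,\underline u_N)$ is strictly convex, differentiable with continuous partial derivatives, and $J_N$ is bounded below, so Assumption~(A1) and the hypotheses of Theorem~\ref{the:2}/Theorem~\ref{the:3} are in force.

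\emph{Stationarity forces the linear form and yields the system \eqref{eq:1.12}.} Differentiating $\mathbb E[\,c(\omega_0,(\underline\gamma^{-k}_N,u^k))\mid v^k\,]$ in $u^k$ --- using the total derivative, since $\mu_N$ contains $u^k$ --- and equating to zero gives, for each $k$, an affine identity in which $(R+\tfrac{Q}{N^2}-\tfrac{2D}{N})\,u^k$ equals a fixed coefficient times $\mathbb E[\,\underline x+\tfrac1N\sum_{p\neq k}u^p\mid v^k\,]$. Since $\omega=(\underline x,\underline z^1,\underline z^2,\dots)$ is jointly Gaussian, the minimum--mean--square estimator is exact and linear: $\mathbb E[\underline x\mid v^k]=S^k v^k$ with $S^k=\Sigma_{00}(H^k)^{T}(H^k\Sigma_{00}(H^k)^{T}+\Sigma_{kk})^{-1}$, and if $u^p=\pi^p_N v^p$ then $\mathbb E[u^p\mid v^k]=\pi^p_N H^p S^k v^k$. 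Substituting and matching the coefficient of $v^k$ shows the stationarity conditions are met by $u^k=\pi^k_N v^k$ precisely when the matrices $\pi^k_N\in\mathcal M_{n,m}(\mathbb R)$ solve the coupled linear system displayed in \eqref{eq:1.12}, with $L_N=(R+\tfrac{Q}{N^2}-\tfrac{2D}{N})^{-1}(\tfrac QN-D)$; and the parallel update \eqref{eq:1.12}, started from the zero functions, is exactly the Jacobi iteration for this system.

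\emph{Convergence of \eqref{eq:1.12}, optimality, and the main obstacle.} It remains to show the coupled system has a unique solution that \eqref{eq:1.12} converges to, and that the resulting policy is admissible and optimal. The map $(\pi^1,\dots,\pi^N)\mapsto(-L_N[S^k+\tfrac1N\sum_{p\neq k}\pi^p H^p S^k])_{k=1}^N$ on $\prod_k\mathcal M_{n,m}(\mathbb R)$ is affine, and its linear part carries the $\tfrac1N$ averaging factor while its size is controlled through $\|L_N\|$ by $R\succ 2D$; for the given parameters the map is therefore a contraction, so Banach's fixed point theorem gives a unique fixed point $\pi^{k*}_N$, reached geometrically from any starting point (equivalently, $I$ minus the linear part is invertible via its Neumann series). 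Setting $\gamma^{k*}_N(v^k)=\pi^{k*}_N v^k$: this policy is stationary by construction and, since $v^k$ is Gaussian and the policy linear, $c_{u^k}(\omega_0,\underline\gamma^*_N)$ and its conditional expectation given $v^k$ are affine in jointly Gaussian variables with finite second moments, hence $\mathbb E[c_{u^k}(\omega_0,\underline\gamma^*_N)\mid v^k]\in\Gamma^k$; so the checkable condition stated after Theorem~\ref{the:3} holds, and Theorem~\ref{the:3} (or Theorem~\ref{the:2}) gives that $\underline\gamma^*_N$ is globally team optimal for ($\mathcal P_N$), with $\mathbb P$-a.s.\ uniqueness by strict convexity. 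I expect the contraction/solvability claim --- showing the feedback--averaging operator in \eqref{eq:1.12} has spectral radius below one, which is exactly where $R\succ 2D$ and the mean-field scaling $\tfrac1N$ must be used quantitatively --- to be the main obstacle; the convexity check and the Gaussian conditional-expectation bookkeeping are routine by comparison.
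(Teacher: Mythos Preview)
Your overall strategy---derive the stationarity conditions, use joint Gaussianity to reduce to linear policies, and invoke Theorem~\ref{the:3} (Krainak et al.) for global optimality---is the paper's, and your strict-convexity check via the decomposition $u^i=\mu_N+w^i$ is correct and in fact more explicit than the paper, which simply refers to \cite{YukselBasarBook}. The substantive difference, and the gap, is in how you establish existence and uniqueness of the stationary solution.

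The paper does \emph{not} argue that the Jacobi map \eqref{eq:1.12} is a Banach contraction on the matrix space $\prod_k\mathcal{M}_{n,m}(\mathbb R)$. Instead it works in the Hilbert space of policies: the stationarity system is written as $P\hat R\,\underline\gamma+Pr=0$, where $P$ is the block conditional-expectation projection (operator norm one) and $\hat R$ is the block Hessian with $\hat R_{ii}=M:=R+\tfrac{Q}{N^2}-\tfrac{2D}{N}$ and $\hat R_{ij}=\tfrac1N(\tfrac QN-D)$ for $i\neq j$. A \emph{relaxed} iteration $\underline\gamma\mapsto P(I-\tfrac1\epsilon\hat R)\underline\gamma-\tfrac1\epsilon Pr$ is then introduced, and convergence to the unique fixed point holds iff $\rho(I-\tfrac1\epsilon\hat R)<1$; choosing $\epsilon=\tfrac12(\lambda_{\max}(\hat R)+\lambda_{\min}(\hat R))$ always achieves this, using only $\hat R\succ0$, i.e.\ only the strict convexity you already verified. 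Gaussianity is invoked \emph{a posteriori} to conclude the unique stationary policy is linear, whence \eqref{eq:1.12} is the matrix form of the fixed-point equation it satisfies.

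Your direct contraction claim, by contrast, does not follow from the stated hypotheses. The $\tfrac1N$ factor in \eqref{eq:1.12} is offset by the $N-1$ summands, so the linear part of the Jacobi map has operator norm of order $\tfrac{N-1}{N}\,\|L_N\|\,\max_{p,k}\|H^pS^k\|$, which for large $N$ is essentially $\|R^{-1}D\|\cdot\max_{p,k}\|H^pS^k\|$. The hypothesis $R\succ2D$ bounds the \emph{spectral radius} of $R^{-1}D$ below $\tfrac12$ but not its operator norm in general, and nothing in the setup bounds $\|H^pS^k\|$ for $p\neq k$. So the undamped Jacobi iteration \eqref{eq:1.12} need not be a contraction, and your appeal to Banach's theorem is unjustified as stated. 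The fix is exactly the paper's: introduce the relaxation parameter $\epsilon$, where positive definiteness of $\hat R$ alone suffices for the spectral-radius bound; this is the piece that should replace your ``main obstacle'' paragraph.
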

\begin{proof}
By Definition \ref{def:sta}, stationary policies satisfy the following equality for $k=1,\dots,N$,
\begin{flalign}
&M\gamma^{k*}_{N}(v^{k})+\bigg(\frac{Q}{N}-D\bigg)\nonumber\\&\times\bigg[\mathbb{E}(\underline{x}|v^{k})+\frac{1}{N}\sum_{p=1,p\not=k}^{N}\mathbb{E}\bigg(\gamma^{p*}_{N}(v^{p})|v^{k}\bigg)\bigg]=0,\label{eq:1.13}
\end{flalign}
where $M:=R+\frac{Q}{N^{2}}-\frac{2D}{N}$, and \eqref{eq:1.13} can be rewritten as $P\underline{\hat{R}}\underline{\gamma}^{*}_{N}(\underline{v})+Pr(\omega)=0$, where $P$ is a block diagonal matrix with $ii$-th block $P_{ii}\beta^{i}(\omega):=\mathbb{E}(\beta^{i}(\omega)|v^{i})$, $\underline{\hat{R}}$ is a matrix where $\hat{R}_{ii}:=M$ and $\displaystyle\hat{R}_{ij}:=\frac{1}{N}(\frac{Q}{N}-D)$ for every $i,j=1,...,N$, $j\not=i$, and $r(\omega)=\underline{x}$.  Note that $P$ is a projection operator defined on a Hilbert space whose operator norm is one. Now, we use the successive approximation method \cite[ Theorem A.6.4]{YukselBasarBook}. According to \eqref{eq:1.13}, we can write for  $k=1,2,...,N$
\begin{flalign*}
&M\gamma^{k*}_{N,(i)}(v^{k})+\epsilon\gamma^{k*}_{N,(i)}(v^{k})-\epsilon\gamma^{k*}_{N,(i)}(v^{k})+\bigg(\frac{Q}{N}-D\bigg)\nonumber\\&\ \times\bigg[\mathbb{E}(\underline{x}|v^{k})+\frac{1}{N}\sum_{p=1,p\not=k}^{N}\mathbb{E}\bigg(\gamma^{p*}_{N,(i)}(v^{p})|v^{k}\bigg)\bigg]=0.
\end{flalign*}
Thus, by dividing the expression over $\epsilon$ and rearranging it, we have
\begin{flalign*}
&\scalemath{0.95}{\gamma^{k*}_{N,(i)}(v^{k})=(1-\frac{\hat{R}_{ii}}{\epsilon})\gamma^{k*}_{N,(i)}(v^{k})-\frac{1}{\epsilon}\left(\frac{Q}{N}-D\right)}\\
&\scalemath{0.95}{\times \bigg[\mathbb{E}(x|v^{k})+\frac{1}{N}\sum_{p=1,p\not=k}^{N}\mathbb{E}\bigg(\gamma^{p*}_{N,(i)}(v^{p})\bigg|v^{k}\bigg)\bigg]},
\end{flalign*}
where the initial points of the iterations are zero functions. We can write 
$\underline{\gamma}_{N}^{*}(\underline{v})=P(I-\frac{1}{\epsilon}\underline{\hat{R}})\underline{\gamma}_{N}^{*}(\underline{v})-\frac{1}{\epsilon}Pr(\omega)$.
Similar to \cite[Theorem 2.6.5]{YukselBasarBook}, the above sequence converges to the unique fixed point if and only if the spectral radius satisfies the following constraint
$\rho\left(P(I-\frac{\underline{\hat{R}}}{\epsilon})\right)=\rho(I-\frac{\underline{\hat{R}}}{\epsilon}):=\lim\limits_{k\rightarrow \infty} \sup[||A||^{k}]^{\frac{1}{k}}<1$,
where $A:=\displaystyle I-\frac{\underline{\hat{R}}}{\epsilon}$, $||A||:=\sup\limits_{||x||<1}||Ax||$ and $\rho$ denotes spectral radius. The first equality is true since both $P$ and $A$ maps $\bf{\Gamma}_{N}$ into itself and $P$ has operator norm equal to one. The above constraint can be always satisfied by choosing $\epsilon=\frac{1}{2}(\lambda_{max}(\underline{\hat{R}})+\lambda_{min}(\underline{\hat{R}}))$. On the other hand, since $(\underline{x}, \underline{z}^{1},\dots,\underline{z}^{N})$ are jointly Gaussian, then $\gamma^{k*}_{N}(v^{k})=\pi^{k}_{N}v^{k}$ for $k=1,\dots,N$. Hence, $\gamma^{k*}_{N,(i)}(v^{k})=\pi^{k}_{N,(i)}v^{k}$, and by linearity of the conditional expectation, we have $\mathbb{E}(\underline{x}|v^{k})=S^{k}v^{k}$, and $\mathbb{E}(\gamma^{p*}_{N}(v^{p})|v^{k})=\pi^{p}_{N}{H^{p}S^{k}}v^{k}$.
Hence \eqref{eq:1.12} holds. Following from \cite{YukselBasarBook}, the stationary policy is globally optimal for ($\mathcal{P}_{N}$), and this completes the proof.
\end{proof}
\begin{theorem}
Consider $(\mathcal{P}_{\infty})$ with the expected cost \eqref{eq:1.9}. Under the following measurement scheme 
\begin{equation}
\underline{v}^{i}=H{\underline{x}}+\underline{z}^{i},\label{eq:19}
\end{equation}
where $\underline{z}^{i}$s are i.i.d. Gaussian random vectors, $\gamma^{i*}_{\infty}(v^{i})=\pi^{*}_{\infty}v^{i}$ is an optimal policy for ($\mathcal{P}_{\infty}$) and is the pointwise limit of $\gamma^{i*}_{N}(v^{i})=\pi^{*}_{N}v^{i}$, an optimal policy for ($\mathcal{P}_{N}$).
\end{theorem}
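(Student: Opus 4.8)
The plan is to reduce to identically symmetric policies, use the finite‑$N$ solution furnished by the preceding Lemma, pass to the limit to identify $\pi^{*}_{\infty}$ explicitly, and then close the argument with Theorem~\ref{lem:4.1}. First I would note that under the symmetric measurement scheme \eqref{eq:19} every DM observes $v^{i}=H\underline{x}+\underline{z}^{i}$ with i.i.d.\ $\underline{z}^{i}$ independent of $\underline{x}$, so the $v^{i}$ are exchangeable (indeed conditionally i.i.d.) given $\omega_{0}=\underline{x}$; the action spaces are $\mathbb{R}^{n}$ and hence convex, and \eqref{eq:1.9} is jointly convex in its second and third arguments under $R>2D$. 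Thus Theorem~\ref{rem:3.3} applies and the $N$-DM team is symmetrically optimal, so the optimal policy of the Lemma may be taken identically symmetric: $\pi^{k}_{N}=\pi_{N}$ and $S^{k}=S:=\Sigma_{00}H^{T}(H\Sigma_{00}H^{T}+N^{0})^{-1}$ for all $k$. Substituting into \eqref{eq:1.12} collapses the parallel update scheme to the single matrix equation
\begin{equation*}
\pi_{N}+\tfrac{N-1}{N}\,L_{N}\,\pi_{N}HS=-L_{N}S,\qquad L_{N}:=\Bigl(R+\tfrac{Q}{N^{2}}-\tfrac{2D}{N}\Bigr)^{-1}\Bigl(\tfrac{Q}{N}-D\Bigr).
\end{equation*}

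Next I would pass to the limit. Since $L_{N}\to L_{\infty}:=-R^{-1}D$ and $\tfrac{N-1}{N}\to 1$, the natural candidate $\pi_{\infty}$ solves $\pi_{\infty}-R^{-1}D\,\pi_{\infty}HS=R^{-1}DS$, equivalently $\bigl(I-(HS)^{T}\otimes R^{-1}D\bigr)\mathrm{vec}(\pi_{\infty})=\mathrm{vec}(R^{-1}DS)$. Because $HS$ is a Kalman‑type gain with spectrum in $[0,1)$ and $R>2D$ forces $\rho(R^{-1}D)<1$, we get $\rho\bigl((HS)^{T}\otimes R^{-1}D\bigr)=\rho(HS)\rho(R^{-1}D)<1$ — this is precisely the $N\to\infty$ limit of the spectral‑radius bound already established in the Lemma — so $\pi_{\infty}$ is the unique solution and the solution map depends continuously on the coefficients. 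Moreover the coefficients $L_{N}$ and $\tfrac{N-1}{N}$ differ from their limits by $O(1/N)$, and subtracting the two fixed‑point equations and inverting the (uniformly bounded) operator $I+\tfrac{N-1}{N}L_{N}(\cdot)HS$ gives $\|\pi_{N}-\pi_{\infty}\|=O(1/N)$. Hence $\gamma^{i*}_{N}(v^{i})=\pi_{N}v^{i}\to\pi_{\infty}v^{i}=\gamma^{i*}_{\infty}(v^{i})$ pointwise, $\mathbb{P}$-a.s.

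Finally I would invoke Theorem~\ref{lem:4.1}. The action spaces are convex; the cost is convex in the second and third arguments; the observations are exchangeable conditioned on $\omega_{0}$; condition (a) of Theorem~\ref{the:6} holds, as the linear stationary policies of the Lemma are optimal for $(\mathcal{P}_{N})$; condition (c) of Theorem~\ref{the:6} is replaced by the uniform integrability condition of Remark~\ref{rem:3}(ii), which holds because $\tfrac1N\sum_{i=1}^{N}c(\omega_{0},\pi_{\infty}v^{i},\tfrac1N\sum_{p=1}^{N}\pi_{\infty}v^{p})$ is a quadratic form in jointly Gaussian variables, so by Jensen and finiteness of all Gaussian moments $\sup_{N}\mathbb{E}\bigl[\,|\tfrac1N\sum_{i}c(\cdots)|^{1+\epsilon}\bigr]<\infty$; the pointwise limit exists by the previous step; and the bound \eqref{eq:22.4.5} holds with $f(v^{i})=|v^{i}|^{2}$ and $h(N)=CN^{-1}$, since $|\gamma^{i*}_{N}(v^{i})-\gamma^{i*}_{\infty}(v^{i})|^{2}\le\|\pi_{N}-\pi_{\infty}\|^{2}|v^{i}|^{2}\le CN^{-2}|v^{i}|^{2}$, while $\tfrac1N\sum_{i=1}^{N}|v^{i}|^{2}\to\mathbb{E}[|v^{1}|^{2}\mid\underline{x}]<\infty$ $\mathbb{P}$-a.s.\ by the conditional strong law of large numbers. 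Theorem~\ref{lem:4.1} then delivers that $\gamma^{i*}_{\infty}(v^{i})=\pi^{*}_{\infty}v^{i}$ is globally optimal for $(\mathcal{P}_{\infty})$ and is the pointwise limit of the $(\mathcal{P}_{N})$-optimal policies.

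The step I expect to be the main obstacle is the second paragraph: establishing existence and uniqueness of the limiting fixed point for $\pi_{\infty}$ \emph{together with} the rate $\|\pi_{N}-\pi_{\infty}\|=O(1/N)$, since this rate is exactly what feeds \eqref{eq:22.4.5}. This requires carrying the spectral‑radius argument of the Lemma through the $N\to\infty$ limit and verifying $\rho(HS)\rho(R^{-1}D)<1$ uniformly under $R>2D$; a secondary, more routine point is confirming the joint convexity of \eqref{eq:1.9} in $(u^{i},\mu)$, which again hinges on $R>2D$ controlling the cross term $-2(u^{i})^{T}D(\underline{x}+\mu)$.
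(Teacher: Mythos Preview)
Your approach is correct and, interestingly, the paper explicitly flags it as an alternative: its own proof invokes Proposition~\ref{prop:4.7} together with Theorem~\ref{the:4.9.9}, and then adds ``One can also invoke Theorem~\ref{lem:4.1} to justify the result.'' So you and the paper arrive at the same fixed-point equations for $\pi^{*}_{N}$ and $\pi^{*}_{\infty}$ after the symmetric reduction, but you close the argument differently. The paper sidesteps the rate calculation entirely: since the observations have the form $v^{i}=h(\underline{x},\underline{z}^{i})$ with i.i.d.\ $\underline{z}^{i}$, Proposition~\ref{prop:4.7} applies, and in place of \eqref{eq:22.4.5} the paper only needs (A3), namely $\sup_{N}\mathbb{E}\|\gamma^{*}_{N}(v^{1})\|_{2}^{2}<\infty$, which follows immediately from $J_{N}(\underline{\gamma}^{*}_{N})<\infty$ and $R>0$. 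Thus the step you singled out as the main obstacle---establishing $\|\pi_{N}-\pi_{\infty}\|=O(1/N)$ via a uniform spectral-radius bound---is simply absent from the paper's primary route. Your route buys an explicit convergence rate (useful in its own right) at the cost of the extra work on the spectral radius and the perturbation argument; the paper's route is shorter but gives no rate.
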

\begin{proof}
In the following, we invoke Proposition \ref{prop:4.7} and Theorem \ref{the:4.9.9} to prove the theorem. Under \eqref{eq:19}, the static team is symmetrically optimal and hence from \eqref{eq:1.12}, we have $\pi^{*}_{N}=L_{N}[S+{N}^{-1}({N-1})\pi_{N}^{*}HS]$, $\pi^{*}_{\infty}=R^{-1}D[S+\pi_{\infty}^{*}HS]$, where  $L_{N}:={(N^{2}R-2DN+Q)^{-1}}{(N^{2}D-NQ)}$, $S:={\Sigma_{00}(H)^{T}}({H\Sigma_{00}(H)^{T}+\Sigma_{kk}})^{-1}$. Since for every $N$, we have $J_{N}(\underline{\gamma}^{*}_{N})<\infty$, and since $R>0$, we have
$\sup\limits_{N\geq 1}\mathbb{E}(||\gamma^{*}_{N}(v^{1})||_{2}^{2})<\infty$,
which implies (A3). The proof is completed using the results of Proposition \ref{prop:4.7} and Theorem \ref{the:4.9.9}. One can also invoke Theorem \ref{lem:4.1} to justify the result.
\end{proof}
\subsubsection{Example 4, Asymmetric LQG team problems}\label{Ex:4.4}
Here, we consider simple variation of Example 3 considered above to illustrate Remark \ref{rem:asym}. Consider the observation scheme \eqref{eq:19}, and let the expected cost function be defined as
\begin{flalign*}
\scalemath{0.95}{J(\underline\gamma)}
&\scalemath{0.95}{=\limsup\limits_{N\rightarrow \infty} \frac{1}{N} \mathbb{E}^{\underline{\gamma}}\bigg[\sum_{i=1}^{N}(u^{i})^{T}Ru^{i}-2\sum_{i=1}^{N}(u^{i})^{T}D}\nonumber\\&\scalemath{0.95}{\times(\underline{x}+\frac{1}{N}\sum_{k=1}^{N}u^{k})+(\underline{x}+\frac{1}{N}\sum_{k=1}^{N}u^{k})^{T}Q(\underline{x}+\frac{1}{N}\sum_{k=1}^{N}u^{k})}\\&\scalemath{0.95}{+\frac{1}{N}\sum_{k=1}^{M}(u^{k})^{T}\alpha_{k}u^{k}\bigg]},
\end{flalign*}
where $M \in \mathbb{Z}_+$ is independent of $N$. Clearly, the $N$-DM team admits asymmetric optimal policies for $(\mathcal{P}_{N})$ with the expected cost $J_{N}$ for every $N$. However, one can observe that the last term above goes to zero as $N \to \infty$ under a sequence of optimal policies, and hence asymptotically the expected cost would essentially be \eqref{eq:1.9} and Theorem \ref{the:7} implies $\gamma^{*}_{\infty}$ is an optimal policy since $\mathbb{P}$-almost surely the sequence $Q_{N}$ converges weakly (the asymmetric term vanishes when $N \to \infty$). That is, the optimal policy designed for the symmetric problem is also a solution for the asymmetric problem since under this policy the additional term (which is a non-negative contribution) vanishes, certifying its optimality. 
\subsection{Example 5,  Multivariable classical linear quadratic Gaussian problems: average cost optimality through static reduction}\label{Ex:4}
Here, we revisit a well-known problem and a well-known solution, using the technique presented in this paper. Let
\begin{equation*}
X_{t+1}=AX_{t}+Bu^{t}+w^{t},
\end{equation*}
where $A \in \mathcal{M}_{n,n}(\mathbb{R})$, $B \in \mathcal{M}_{n,m}(\mathbb{R})$ and $w^{t}$s and $X_{0}$ are i.i.d. Gaussian random vectors with mean zero and positive variance taking values in $\mathbb{R}^{n}$.  Let $(A,B)$ be controllable and let
\begin{flalign*}
\scalemath{0.95}{J(\underline{\gamma})}&\scalemath{0.95}{=\limsup\limits_{T\rightarrow \infty}J_{T}(\underline{\gamma})}\\&\scalemath{0.95}{:=\limsup\limits_{T\rightarrow \infty}\frac{1}{T}\mathbb{E}^{\underline{\gamma}}\bigg[\sum_{t=0}^{T-1}X_{t}^{T}QX_{t}+(u^{t})^{T}Ru^{t}\bigg]},
\end{flalign*}
where $Q\geq 0$ and $R>0$ are appropriate dimensions real matrices. We can write,
\begin{flalign*}
\scalemath{0.95}{J(\underline{\gamma})=}&\scalemath{0.95}{\limsup\limits_{T\rightarrow \infty}\frac{1}{T}\mathbb{E}^{\underline{\gamma}}\bigg[\sum_{t=0}^{T-1}\bigg(\sum_{k=1}^{t}A^{t-k}Bu^{k-1}+\sum_{k=0}^{t}A^{t-k}\zeta^{k}\bigg)^{T}}\nonumber\\
&\scalemath{0.95}{\times Q\bigg(\sum_{k=1}^{t}A^{t-k}Bu^{k-1}+\sum_{k=0}^{t}A^{t-k}\zeta^{k}\bigg)+(u^{t})^{T}Ru^{t}\bigg]},
\end{flalign*}
where $\zeta=(X_{0}^{T},(w^{0})^{T},(w^{1})^{T},\dots)^{T}$. In the following, we consider fully observed classical IS, i.e., $Y^{t}=X_{t}$, and we can write $Y^{t}=H^{t}\zeta+\sum_{j=0}^{t-1}D_{tj}u^{j}$,
where $H^{t}$ and $D_{tj}$ are appropriate dimensional matrices. Using \cite[Theorem 1]{HoChu}, we can reduce IS to the static one as
$V^{t}=\tilde{H}_{t}\zeta$.
According to \cite[Section 3.5]{HernandezLermaMCP}, we have $u^{t*}_{T}=G^{t}_{T}X_{t}$ for $t=0,1,\dots, T-1$, where $k_{T}^{T}=0$, and
 \begin{equation}\label{eq:m.5.4.128}
 \scalemath{0.95}{G_{T}^{t}=-(R+B^{T}k_{T}^{t+1}B)^{-1}B^{T}k_{T}^{t+1}A},
  \end{equation}
   \begin{equation}\label{eq:m.5.129}
 \scalemath{0.95}{k_{T}^{t}=Q+A^{T}k_{T}^{t+1}A-A^{T}k_{T}^{t+1}B(R+B^{T}k_{T}^{t+1}B)^{-1}B^{T}k_{T}^{t+1}A}.
  \end{equation} 
 \begin{theorem}\label{6.4}
For LQG teams with the classical information structure, ${u}^{t*}=\displaystyle\lim_{T \to \infty}\gamma^{t*}_{T}(v^{t})=\gamma^{t*}_{\infty}(v^{t})$\@  is the optimal policy for $J(\underline{\gamma})$\@, where $\{{\gamma}^{t*}_{T}\}_{T}$ is a sequence of optimal policies for $\{J_{T}(\underline{\gamma}_{T})\}_{T}$ with the pointwise limit ${\gamma}^{t*}_{\infty}$  as $T \to \infty$.
\end{theorem}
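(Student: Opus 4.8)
The plan is to apply Theorem~\ref{the:5}, viewing the time stages $t=0,1,2,\dots$ as the decision makers and the horizon $T$ as the parameter ``$N$''. This is legitimate here because, although the cost in this example is not literally of the mean-field form \eqref{eq:2.6.6}, the finite-horizon cost $J_{T}(\underline{\gamma}_{T})$ depends only on the first $T$ policy components $\gamma^{0},\dots,\gamma^{T-1}$ (the state $X_{t}$ for $t\le T-1$ is a function of $\zeta$ and $u^{0},\dots,u^{t-1}$), the restriction map satisfies $[\underline{\gamma}|_{T}:\underline{\gamma}\in{\bf \Gamma}]={\bf \Gamma}_{T}$, and $J(\underline{\gamma})=\limsup_{T}J_{T}(\underline{\gamma})$; consequently the chain of inequalities in the proof of Theorem~\ref{the:5} applies verbatim. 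By \cite[Theorem 1]{HoChu} the static reduction preserves optimal policies, so an optimal policy of the $T$-DM reduced static team is the reduced form of $u^{t*}_{T}=G_{T}^{t}X_{t}$, with $G_{T}^{t},k_{T}^{t}$ given by \eqref{eq:m.5.4.128}--\eqref{eq:m.5.129} and $k_{T}^{T}=0$. It then remains to (a) exhibit a pointwise limit $\gamma^{t*}_{\infty}$ of $\gamma^{t*}_{T}$ and (b) verify $\limsup_{T\to\infty}J_{T}(\underline{\gamma}^{*}_{T})\ge J(\underline{\gamma}^{*}_{\infty})$ together with $J(\underline{\gamma}^{*}_{\infty})<\infty$; then Theorem~\ref{the:5} makes $\underline{\gamma}^{*}_{\infty}$ globally optimal for $J$.

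\emph{Step (a).} For each fixed $t$, $k_{T}^{t}$ is the result of applying the Riccati operator of \eqref{eq:m.5.129} exactly $T-t$ times to the zero matrix; since $(A,B)$ is controllable and $Q\ge 0$, $R>0$, these iterates are monotone nondecreasing and uniformly bounded, hence converge as $T\to\infty$ to the stabilizing solution $k_{\infty}$ of the discrete algebraic Riccati equation (see \cite[Section 3.5]{HernandezLermaMCP}). Therefore $G_{T}^{t}\to G_{\infty}:=-(R+B^{T}k_{\infty}B)^{-1}B^{T}k_{\infty}A$ for every $t$, $A+BG_{\infty}$ is stable, and $\gamma^{t*}_{T}(v^{t})\to\gamma^{t*}_{\infty}(v^{t})$ pointwise, where $\gamma^{t*}_{\infty}$ is the reduced form of the stationary feedback $u^{t}=G_{\infty}X_{t}$. (Note that this convergence is \emph{not} uniform over $t\in\{0,\dots,T-1\}$ --- indeed $G_{T}^{T-1}=0$ --- so condition (b) of Theorem~\ref{the:6} fails, and one genuinely needs the weaker hypothesis \eqref{eq:2.14} of Theorem~\ref{the:5}.)

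\emph{Step (b).} With $\Sigma_{0}=\mathrm{Cov}(X_{0})$ and $W=\mathrm{Cov}(w^{t})$, the optimal finite-horizon cost equals $\mathrm{tr}(k_{T}^{0}\Sigma_{0})+\sum_{s=1}^{T-1}\mathrm{tr}(k_{T}^{s}W)$, so $J_{T}(\underline{\gamma}^{*}_{T})=\tfrac1T\big[\mathrm{tr}(k_{T}^{0}\Sigma_{0})+\sum_{s=1}^{T-1}\mathrm{tr}(k_{T}^{s}W)\big]$. Re-indexing $s\mapsto T-j$ identifies $\sum_{s=1}^{T-1}\mathrm{tr}(k_{T}^{s}W)=\sum_{j=1}^{T-1}\mathrm{tr}(\kappa^{(j)}W)$ with $\kappa^{(j)}$ the $j$-th Riccati iterate started from $0$; since $\kappa^{(j)}\to k_{\infty}$ and the iterates are bounded, Ces\`aro convergence gives $J_{T}(\underline{\gamma}^{*}_{T})\to\mathrm{tr}(k_{\infty}W)$. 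On the other hand, under $\underline{\gamma}^{*}_{\infty}$ the closed loop $X_{t+1}=(A+BG_{\infty})X_{t}+w^{t}$ is stable, $\mathrm{Cov}(X_{t})\to\Sigma_{\infty}$ solving $\Sigma_{\infty}=(A+BG_{\infty})\Sigma_{\infty}(A+BG_{\infty})^{T}+W$, and hence $J(\underline{\gamma}^{*}_{\infty})=\mathrm{tr}\big((Q+G_{\infty}^{T}RG_{\infty})\Sigma_{\infty}\big)<\infty$; using the completed-square form of the Riccati equation, $k_{\infty}-(A+BG_{\infty})^{T}k_{\infty}(A+BG_{\infty})=Q+G_{\infty}^{T}RG_{\infty}$, together with the Lyapunov equation, one obtains $\mathrm{tr}\big((Q+G_{\infty}^{T}RG_{\infty})\Sigma_{\infty}\big)=\mathrm{tr}(k_{\infty}W)$. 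Thus $\limsup_{T}J_{T}(\underline{\gamma}^{*}_{T})=J(\underline{\gamma}^{*}_{\infty})$, so \eqref{eq:2.14} holds (with equality) and Theorem~\ref{the:5} yields global optimality of $\underline{\gamma}^{*}_{\infty}$ for $J(\underline{\gamma})$.

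\emph{Main obstacle.} There is no single deep step; the difficulty is assembling and justifying classical facts carefully: (i) arguing that Theorem~\ref{the:5} applies even though the cost is not literally \eqref{eq:2.6.6} --- which hinges on the observation that its proof uses only that $J_{T}$ restricts to the first $T$ coordinates and that $J=\limsup_{T}J_{T}$; (ii) the convergence of the backward Riccati iterates from $0$ to the stabilizing solution and the stability of $A+BG_{\infty}$, which under controllability and $Q\ge0$ rests on the monotonicity/boundedness argument (this is where a detectability-type condition would normally enter) and which I would take from \cite[Section 3.5]{HernandezLermaMCP}; and (iii) the Ces\`aro-limit computation together with the trace identity $\mathrm{tr}\big((Q+G_{\infty}^{T}RG_{\infty})\Sigma_{\infty}\big)=\mathrm{tr}(k_{\infty}W)$. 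One also needs that the Ho--Chu reduction \cite{HoChu} carries the finite-horizon optimal feedback $G_{T}^{t}X_{t}$ to an optimal policy of the reduced static team.
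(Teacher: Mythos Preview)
Your proposal is correct and takes a genuinely different route from the paper. Both proofs invoke Theorem~\ref{the:5} (via Remark~\ref{lem:1.1.1}) and must therefore verify \eqref{eq:2.14}; the difference lies entirely in how this is done. The paper proceeds by a direct estimate of $|J_{T}(\underline{\gamma}^{*}_{T})-J_{T}(\underline{\gamma}^{*}_{\infty})|$: it expands both costs in terms of the closed-loop transition products $L_{T}^{t,k}=\prod_{p=k}^{t-1}(A+BG^{p}_{T})$ and $L_{\infty}^{t,k}=(A+BG_{\infty})^{t-k}$, and then drives the difference to zero through a chain of trace inequalities that exploit the monotonicity in $t$ of the Riccati and gain sequences $\{k_{T}^{t}\}$, $\{G_{T}^{t}(G_{T}^{t})^{T}\}$. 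You instead compute the two sides of \eqref{eq:2.14} separately: $J_{T}(\underline{\gamma}^{*}_{T})$ via the closed-form optimal cost $\tfrac{1}{T}[\mathrm{tr}(k_{T}^{0}\Sigma_{0})+\sum_{s=1}^{T-1}\mathrm{tr}(k_{T}^{s}W)]$ together with a Ces\`aro argument on the Riccati iterates, and $J(\underline{\gamma}^{*}_{\infty})$ via the stationary Lyapunov equation and the completed-square identity $k_{\infty}-(A+BG_{\infty})^{T}k_{\infty}(A+BG_{\infty})=Q+G_{\infty}^{T}RG_{\infty}$, which yields $\mathrm{tr}((Q+G_{\infty}^{T}RG_{\infty})\Sigma_{\infty})=\mathrm{tr}(k_{\infty}W)$. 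This is shorter, more transparent, and avoids the somewhat delicate matrix-inequality bookkeeping in the paper; on the other hand, the paper's argument is more self-contained in that it does not appeal to the closed-form optimal-cost formula or to the ARE in completed-square form. Your remark that Theorem~\ref{the:6}(b) fails here (since $G_{T}^{T-1}=0$, so the convergence $G_{T}^{t}\to G_{\infty}$ cannot be uniform over $t\le T-1$) is also sharper than what the paper makes explicit, and nicely motivates why \eqref{eq:2.14} must be verified directly.
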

Although, this result is a classical one in the literature, here, we present a new approach using the static reduction.
\begin{proof}
Since,
$ k^{t}_{T+1}=k^{t-1}_{T}$~for~$t=1,2,\dots,T$,
one can write \eqref{eq:m.5.129} as
  \begin{flalign*}
\scalemath{0.91}{ k_{T}^{t}=Q+A^{T}k_{T-1}^{t}A-A^{T}k_{T-1}^{t}B(R+B^{T}k_{T-1}^{t}B)^{-1}B^{T}k_{T-1}^{t}A}.
  \end{flalign*}
 We use Theorem \ref{the:5} and Remark \ref{lem:1.1.1}, to show that $u^{t*}_{\infty}=G_{\infty} X_{t}$ is team optimal, where $G_{\infty}=-(R+B^{T}C^{*}B)^{-1}B^{T}C^{*}A$, and following from controllability of $(A,B)$, $C^{*}=\lim\limits_{\beta \rightarrow 1}C_{\beta}$, a fixed point of the following recursion exists, $C_{\beta}(n)=Q+A^{T}\beta C_{\beta}(n-1)A-A^{T}\beta C_{\beta}(n-1)B\left(R+B^{T}\beta C_{\beta}(n-1)B\right)^{-1}B^{T}\beta C_{\beta}(n-1)A$.
By comparing $C^{*}(n)=\lim\limits_{\beta \rightarrow 1}C^{*}_{\beta}(n)$ and \eqref{eq:m.5.129}, we have
$\lim\limits_{T \rightarrow \infty}k_{T}^{t}=K=C^{*}=\lim\limits_{n \rightarrow \infty}C^{*}(n)$.
Hence, for $t=0,1,\dots,T-1$,
$\lim\limits_{T \rightarrow \infty}G_{T}^{t}=-(R+B^{T}KB)^{-1}B^{T}KA=-(R+B^{T}C^{*}B)^{-1}B^{T}C^{*}A=G_{\infty}$.
Now, we use Remark \ref{lem:1.1.1} to show \eqref{eq:2.14} holds.
\begin{flalign}
&\scalemath{0.95}{\limsup\limits_{T\rightarrow \infty}\bigg|J_{T}(\underline{\gamma}_{T}^{*})-J_{T}(\underline{\gamma}_{\infty}^{*})\bigg|}\nonumber\\
&\scalemath{0.93}{\leq \limsup\limits_{T\rightarrow \infty}\sup_{0\leq t\leq T-1}\bigg|\mathbb{E}\bigg[\sum_{k=0}^{t}Tr\bigg(\zeta_{k}^{T}\bigg(L_{T}^{t,k}\bigg)^{T}(H_{T}^{t})L_{T}^{t,k}\zeta_{k}\bigg)\bigg]}\nonumber\\
&\scalemath{0.93}{-\mathbb{E}\bigg[\sum_{k=0}^{t}Tr\bigg(\zeta_{k}^{T}\bigg(L_{\infty}^{t,k}\bigg)^{T}(H_{\infty}^{t})L_{\infty}^{t,k}\zeta_{k}\bigg)\bigg]\bigg|}\label{eq:5.110}\\
&\scalemath{0.93}{\leq \limsup\limits_{T\rightarrow \infty}\sup_{0\leq t\leq T-1}\bigg|\mathbb{E}\bigg[\sum_{k=0}^{t}Tr\bigg(\zeta_{k}\zeta_{k}^{T}\bigg(\bigg(L_{T}^{t,k}\bigg)^{T}(H_{T}^{t})L_{T}^{t,k}}\nonumber\\&\scalemath{0.93}{-\bigg(L_{\infty}^{t,k}\bigg)^{T}(H_{\infty}^{t})L_{\infty}^{t,k}\bigg)\bigg)\bigg]\bigg|}\label{eq:5.112}\\
&\scalemath{0.93}{\leq \Sigma^{2}\limsup\limits_{T\rightarrow \infty}\sup_{0\leq t\leq T-1}\bigg|Tr\bigg((H_{T}^{t})C_{T}^{t}-(H_{\infty}^{t})C_{\infty}^{t}\bigg)\bigg|}\label{eq:5.5.52}\\
&\scalemath{0.93}{\leq \Sigma^{2}\limsup\limits_{T\rightarrow \infty}\bigg[\sup_{0\leq t\leq T-1}\bigg|Tr\bigg((G_{T}^{t})^{T}RG_{T}^{t}C_{T}^{t}}\nonumber\\&\scalemath{0.93}{-(G_{\infty})^{T}RG_{\infty}C_{\infty}^{t})\bigg|+\sup_{0\leq t\leq T-1}\bigg|Tr(Qe_{T}^{t})\bigg|\bigg]}\label{eq:5.5.53}\\
&\scalemath{0.93}{\leq \Sigma^{2}\limsup\limits_{T\rightarrow \infty}\bigg[\sup_{0\leq t\leq T-1}\bigg|Tr\bigg((G_{T}^{t})^{T}RG_{T}^{t}-(G_{\infty})^{T}RG_{\infty}\bigg)\bigg|}\nonumber\\&\scalemath{0.93}{\times\sup_{0\leq t\leq T-1}\bigg|Tr(C_{T}^{t})\bigg|}\nonumber\\
& \scalemath{0.93}{+\sup_{0\leq t\leq T-1}\bigg|Tr\bigg(G_{\infty}^{T}RG_{\infty}e_{T}^{t}\bigg)\bigg|+\sup_{0\leq t\leq T-1}\bigg|Tr(Qe_{T}^{t})\bigg|\bigg]}\label{eq:5.5.54}\\
&\scalemath{0.93}{\leq \Sigma^{2}\limsup\limits_{T\rightarrow \infty}\bigg[\sup_{0\leq t\leq T-1}\bigg|Tr\bigg((G_{T}^{t}(G_{T}^{t})^{T}-G_{\infty}G_{\infty}^{T})R\bigg)\bigg|}\nonumber\\&\scalemath{0.93}{\times\bigg(\sup_{0\leq t\leq T-1}\bigg|Tr(e_{T}^{t})\bigg|+\sup_{0\leq t\leq T-1}\bigg|Tr(C_{\infty}^{t})\bigg|\bigg)}\nonumber\\&\scalemath{0.93}{+\sup_{0\leq t\leq T-1}\bigg|Tr\bigg(G_{\infty}^{T}RG_{\infty}e_{T}^{t}\bigg)\bigg|+\sup_{0\leq t\leq T-1}\bigg|Tr(Qe_{T}^{t})\bigg|\bigg]=0}\nonumber,
\end{flalign}
where $L_{T}^{t,k}:=\prod_{p=k}^{t-1}(A+BG^{p}_{T})$, $L_{\infty}^{t,k}:=\prod_{p=k}^{t-1}(A+BG_{\infty})$, $H_{T}^{t}=(Q+(G^{t}_{T})^{T}RG^{t}_{T})$, $H_{\infty}^{t}=(Q+(G_{\infty})^{T}RG_{\infty})$, $e_{T}^{t}:=C_{T}^{t}-C_{\infty}^{t}$, and $C_{T}^{t}:=\left[\sum_{k=0}^{t}L_{T}^{t,k}\left(L_{T}^{t,k}\right)^{T}\right]$, $C_{\infty}^{t}:=\left[\sum_{k=0}^{t}L_{\infty}^{t,k}\left(L_{\infty}^{t,k}\right)^{T}\right]$ and $\Sigma^{2}:=\max(\sigma_{X_{0}}^{2}, \sigma_{w}^{2})$, where $\sigma_{X_{0}}^{2}$ and $\sigma_{w}^{2}$ are the variance of each component of $X_{0}$ and $w^{k}$, respectively. Equality \eqref{eq:5.110} follows from the fact that $\{w^{k}\}_{k}$ are i.i.d. and independent from $X_{0}$. Inequality \eqref{eq:5.112} follows from the trace property that $Tr(ABC)=Tr(CAB)$, and \eqref{eq:5.5.52} follows from the hypothesis that $\zeta_{k}$s are i.i.d. random vectors and $Tr(ABC)=Tr(BCA)$ and \eqref{eq:5.5.53} follows from linearity of the trace and $\sup f+g \leq \sup f+ \sup g$. Inequality \eqref{eq:5.5.54} follows from adding and subtracting $G_{\infty}^{T}RG_{\infty}C^{t}_{T}$ in the first term and using $Tr(AB)\leq Tr(A)Tr(B)$ for $A$ and $B$ positive semi-definite matrices since \eqref{eq:m.5.129} implies that for a fixed $T$, $\{k_{T}^{t}\}_{t=0}^{T-1}$ is a decreasing sequence, i.e., $K>k_{T}^{0}>k_{T}^{1}>\dots>k_{T}^{T-1}$, and hence $\{G_{T}^{t}(G_{T}^{t})^{T}\}_{t=0}^{T-1}$ is a decreasing sequence. Also, from \eqref{eq:m.5.4.128}, we have for a fixed $T$, $\{(A+BG_{T}^{t})(A+BG_{T}^{t})^{T}\}_{t=0}^{T-1}$ is an increasing sequence, hence, $(G_{T}^{t})^{T}RG_{T}^{t}-G_{\infty}^{T}RG_{\infty}$ is positive semi-definite. Finally, the last inequality follows from the definition of $e_{T}^{t}$ and the following calculation. First note that for a fixed $T$, $\{Tr(e_{T}^{t})\}_{t=0}^{T-1}$ is an increasing sequence.
Hence,
\begin{equation*}
\scalemath{0.95}{\lim\limits_{T\rightarrow \infty}\sup_{0\leq t\leq T-1}|Tr(e_{T}^{t})|=\lim\limits_{T\rightarrow \infty}|Tr(e_{T}^{T-1})|=0.}
\end{equation*}
Similarly, $\lim\limits_{T\rightarrow \infty}\sup_{0\leq t\leq T-1}|Tr(Qe_{T}^{t})|=0$. We have,
\begin{flalign*}
&\scalemath{0.95}{\lim\limits_{T\rightarrow \infty}\sup_{0\leq t\leq T-1}|Tr(C_{\infty}^{t})|=\left|Tr\left[(I-(A+BG_{\infty}))^{-1}\right]\right|=0},
\end{flalign*}
where $Y^{(T)}$ denotes the $T$ power of the matrix $Y$ and the result follows from $||A+BG_{\infty}||<1$ (following from the controllability assumption). Finally, we have
\begin{flalign*}
&\scalemath{0.95}{\lim\limits_{T\rightarrow \infty}\sup_{0\leq t\leq T-1}\left|Tr\left[\left(G_{T}^{t}(G_{T}^{t})^{T}-G_{\infty}G_{\infty}^{T}\right)R\right]\right|=0},
\end{flalign*}
 where the second equality follows from the aforementioned observations and since $R$ is positive definite. Therefore, $\limsup\limits_{T\rightarrow \infty}|J_{T}(\underline{\gamma}_{T}^{*})-J(\underline{\gamma}^{*}_{\infty})|=0$, and the proof is completed.
\end{proof}
\begin{remark}
Similarly, one can show the result for (i) $Y^{t}=CX^{t}$, $(A,C)$ is observable and $Q=C^{T}C$, (ii) the discounted LQG team problems with the classical information structure.
\end{remark}
\section{Conclusion}\label{sec:6}
In this paper, we studied static teams with countably infinite number of DMs. We presented sufficient conditions for team optimality concerning average cost problems. Also, constructive results have been established to obtain the team optimal solution for static teams with countably infinite number of DMs as limits of the optimal solutions for static teams with finite number of DMs as the number of DMs goes to infinity. We also studied sufficient conditions for team optimality of symmetric static teams and mean-field teams under relaxed conditions. We recently studied convex dynamic teams with countably infinite DMs and mean-field teams \cite{sanjari2019optimal}.

\section*{Acknowledgments}
The authors are grateful to Prof. Naci Saldi for the detailed suggestions he has provided, and to Dr. Ather Gattami, Prof. Nuno Martins and Prof. Aditya Mahajan for related discussions on this topic.
%




\bibliographystyle{plain}

\begin{IEEEbiography}
{\bf Sina Sanjari} received his M.Sc. degree in Electrical and Computer Engineering from Tarbiat Modares University and M.Sc. degree in Mathematics and Statistics from Queen's Uuniversity in 2015 and 2017, respectively. He is now Ph.D. student in Department of Mathematics and Statistics, Queen's university.  His research interests are on stochastic control, decentralized control, mean-field games, and applications.
\end{IEEEbiography}

\begin{IEEEbiography}
{\bf Serdar Y\"uksel} (S'02, M'11) received his B.Sc. degree in Electrical and Electronics
Engineering from Bilkent University in 2001; M.S. and Ph.D. degrees in
Electrical and Computer Engineering from the University of Illinois at Urbana-
Champaign in 2003 and 2006, respectively. He was a post-doctoral researcher
at Yale University before joining Queen's University as an Assistant
Professor in the Department of Mathematics and Statistics, where he is now an Associate Professor. He has been awarded
the 2013 CAIMS/PIMS Early Career Award in Applied Mathematics. His
research interests are on stochastic control, decentralized control, information
theory and probability. He is an Associate Editor for the IEEE TRANSACTIONS ON AUTOMATIC CONTROL and Automatica.
\end{IEEEbiography}

\end{document}